\documentclass[12pt,a4paper,reqno]{amsart}
\usepackage[T1]{fontenc}
\usepackage{lmodern}
\usepackage{amsmath,amssymb,mathtools,mathrsfs}
\usepackage{microtype}
\usepackage[a4paper,hmargin=28mm,top=26mm,bottom=28mm,includeheadfoot]{geometry}
\usepackage{tikz-cd}
\usepackage{booktabs}
\usepackage{xurl}
\usepackage{xcolor}
\definecolor{linkblue}{RGB}{35,64,105}
\usepackage[colorlinks=true,linkcolor=linkblue,citecolor=linkblue,urlcolor=linkblue]{hyperref}

\numberwithin{equation}{section}
\newtheorem{theorem}{Theorem}[section]
\newtheorem{lemma}[theorem]{Lemma}
\newtheorem{proposition}[theorem]{Proposition}
\newtheorem{corollary}[theorem]{Corollary}
\theoremstyle{definition}

\newtheorem{remark}[theorem]{Remark}
\newtheorem*{remark*}{Remark}

\newcommand{\X}{\mathcal X}
\newcommand{\ddc}{dd^c}
\newcommand{\R}{\mathbb R}
\newcommand{\Q}{\mathbb Q}
\newcommand{\CC}{\mathbb C}
\newcommand{\OO}{\mathcal O}
\DeclareMathOperator{\PSH}{PSH}
\DeclareMathOperator{\vol}{vol}
\DeclareMathOperator{\Supp}{Supp}

\DeclareMathOperator{\MA}{MA}
\title[Invariance of logarithmic plurigenera and adjoint volumes]
{Invariance of logarithmic plurigenera and adjoint volumes in K\"ahler families}

\author{Kefeng Liu}
\address{Mathematical Sciences Research Center, Chongqing University of Technology,
Chongqing 400054, China}
\email{kefengliu@cqut.edu.cn}

\author{Xueyuan Wan}
\address{Mathematical Sciences Research Center, Chongqing University of Technology,
Chongqing 400054, China}
\email{xwan@cqut.edu.cn}
\makeatletter
\@namedef{subjclassname@2020}{\textup{2020} Mathematics Subject Classification}
\makeatother
\subjclass[2020]{32Q15, 32G05, 32U05, 14D06, 14E30}
\keywords{invariance of plurigenera, logarithmic pluricanonical forms,
K\"ahler families, adjoint volumes, singular K\"ahler--Einstein metrics}
\hypersetup{pdftitle={Invariance of logarithmic plurigenera and adjoint volumes in Kahler families},
pdfauthor={Kefeng Liu and Xueyuan Wan}}

\begin{document}

\begin{abstract}
We consider smooth proper families of compact complex manifolds over
a disk whose total space is K\"ahler, together with a boundary divisor
with relative simple normal crossings. We prove that, if the boundary
is a rational divisor with coefficients in $[0,1)$, every logarithmic
pluricanonical form on a fiber extends to a neighborhood of that
fiber, and hence that the logarithmic plurigenera are constant. No
positivity of the log canonical class is assumed. We prove further
that, for real boundary coefficients in $[0,1]$, a log-adjoint class
twisted by a smooth semipositive form has the same volume on every
fiber, and that it is pseudoeffective either on every fiber or on
none. When the log-adjoint class is nef on every fiber, we give a
second proof of the extension theorem, which does not use the minimal
model program.
\end{abstract}

\maketitle

\section{Introduction}

The plurigenera $h^0(X,\OO_X(mK_X))$ of a compact complex manifold
are among the basic invariants of its birational geometry. For a
pair $(X,B)$, where $B=\sum_ib_iD_i$ is an effective rational
divisor with distinct prime components $D_i$, the logarithmic
plurigenera are defined by
\[
 P_m(X,B)=h^0\bigl(X,\OO_X(m(K_X+B))\bigr),
 \qquad m\ge1,\ mB\ \text{integral};
\]
a section of $\OO_X(m(K_X+B))$ is an $m$-canonical form with poles of
order at most $mb_i$ along $D_i$. In a holomorphic family these
dimensions are upper semicontinuous, so that they might jump up at a
special fiber. In order to exclude this, it suffices to show that
every section on the special fiber extends to a neighborhood of that
fiber. In this paper we consider this extension problem for families
of pairs with relative simple normal crossings whose total space is
K\"ahler. We shall prove that the logarithmic plurigenera of such
families are invariant, and that the volumes of their log-adjoint
classes, which are real cohomology classes, are invariant as well.

For projective families the invariance of plurigenera was proved by
Siu, first for families of general type \cite{Siu98} and then in
general \cite{Siu02}. Siu constructed singular metrics adapted to
pluricanonical sections and applied the $L^2$ extension theorem of
Ohsawa--Takegoshi \cite{OT}. Kawamata and Takayama obtained
deformation and extension results for varieties with canonical
singularities \cite{Kawamata,Takayama}, and P\u{a}un \cite{Paun07}
simplified the construction of the metrics by a one-tower argument
and proved a twisted $L^2$ extension theorem. For projective
log-smooth Kawamata log terminal (klt) pairs, Hacon--McKernan--Xu
proved the invariance under the assumption that either the log
canonical divisor or the boundary is relatively big
\cite[Theorem~1.8]{HMX}. Later they removed this assumption
\cite[Theorem~4.2]{HMX18}, using the quantitative extension theorem
of Berndtsson--P\u{a}un \cite[Theorem~0.2]{BP}.

For K\"ahler families, Levine obtained early results on
pluricanonical divisors under deformations \cite{Levine83,Levine85},
and Siu formulated the general invariance problem in \cite{Siu02}.
Cao--P\u{a}un proved infinitesimal extension results under
integrability conditions \cite{CP}. Related positivity theorems for
relative canonical and adjoint classes were proved by Schumacher
\cite{Schumacher}, P\u{a}un \cite{Paun17}, Guenancia \cite{Gue} and
Cao--Guenancia--P\u{a}un \cite{CGP}. Recently, Deng--Ning--Wang--Zhou
proved the invariance for families with nef canonical bundles
\cite{DNWZ}, and Chen--Rao--Wang treated, among other cases, smooth
families having one fiber with nef canonical bundle \cite{CRW}.
Li--P\u{a}un--Wang--Zhou proved the invariance of all plurigenera
for smooth proper families with K\"ahler total space, without any
assumption of nefness \cite{LPWZ}.

In this paper we prove the logarithmic extension theorem for rational
boundaries with relative simple normal crossings and coefficients in
$[0,1)$, again without any assumption of positivity on the log
canonical class. Throughout the paper, $p\colon\X\to\Delta$ denotes a
proper holomorphic submersion onto a disk $\Delta\subset\CC$ centered
at $0$, with coordinate $t$. Its fibers $X_t=p^{-1}(t)$ are connected
compact complex manifolds of dimension $n$, and the total space $\X$
is K\"ahler. We put $K_{\X/\Delta}=K_\X\otimes p^*K_\Delta^{-1}$. For
a bundle, a divisor, a form or a function on $\X$, the subscript $t$
denotes its restriction to $X_t$; for instance, $B_t=B|_{X_t}$. We
use the same convention for the other families below. We say that an
effective divisor $B=\sum_i b_iD_i$ on $\X$ has \emph{relative simple
normal crossings}, or is \emph{relative SNC}, if near every point of
$\X$ there are holomorphic coordinates $(z_1,\ldots,z_n,t)$, with
$p=t$, in which the distinct prime components of its support meeting
the chart are given by $z_1=0,\ldots,z_k=0$, respectively, for some
$0\le k\le n$. Then each prime component is smooth, and all
intersections of distinct components are smooth over the base.

\begin{theorem}\label{thm:main}
Let $p\colon\X\to\Delta$ and $B=\sum_i b_iD_i$ be as above, with
$b_i\in\Q$ and $0\le b_i<1$, and let $m\ge1$ be an integer such that
$mB$ is integral. Then every section of
$
 \OO_{X_0}\bigl(m(K_{X_0}+B_0)\bigr)
$
extends to a section of
$\OO_\X\bigl(m(K_{\X/\Delta}+B)\bigr)$ over $p^{-1}(U)$,
for a sufficiently small disk $U$ about $0$.
Moreover,
\[
 t\longmapsto h^0\bigl(X_t,m(K_{X_t}+B_t)\bigr)
\]
is locally constant, and
$p_*\OO_\X\bigl(m(K_{\X/\Delta}+B)\bigr)$ is locally free
and commutes with holomorphic base change.
\end{theorem}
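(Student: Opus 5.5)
The approach is to reduce to the non-logarithmic Kähler invariance theorem of Li--P\u{a}un--Wang--Zhou \cite{LPWZ}, transported to pairs by the standard cyclic-covering or "twisted" version of their argument. Concretely, I would follow the P\u{a}un one-tower scheme \cite{Paun07} in the form used in \cite{LPWZ}. There are three ingredients:
\begin{enumerate}
\item Produce on $L=K_{\X/\Delta}+B$ near $X_0$ a singular hermitian metric $h$ with $i\Theta_h\ge0$ in the sense of currents.
\item Arrange that $h$ is dominated, restricted to $X_0$, by $|s|^{-2/m}$ for the given section $s$.
\item Ensure that $h_0 \cdot e^{-\phi_B}$ has integrability properties which let the Ohsawa--Takegoshi extension theorem (the twisted version \cite{Paun07}, or the Kähler version of \cite{CP,LPWZ} with an $L^2$ estimate on a Kähler total space) extend $s$ from $X_0$.
\end{enumerate}
The hypothesis $b_i<1$ and relative SNC enter exactly in the integrability of $e^{-\phi_B}=\prod|f_i|^{-2b_i}$: in the local coordinates $(z,t)$ the weight $\prod|z_i|^{-2b_i}$ is locally integrable uniformly in $t$, and it restricts to each fiber as the same type of klt weight. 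So $(X_t,B_t)$ is klt with a uniform constant.

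The steps, in order, are as follows.

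First, I handle the case where $K_{X_0}+B_0$ is pseudoeffective on $X_0$; otherwise $h^0=0$ and there is nothing to extend. The case without sections is still needed for the constancy claim, and I treat it below by semicontinuity.

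Second, I run the iteration. Write $mL=(m-1)L+ (K_{\X/\Delta}+B)$, and iteratively extend sections of $k m L|_{X_0}\otimes$ (a fixed ample-like twist $A$) for all $k$. In the Kähler setting there is no ample line bundle, so, as in \cite{LPWZ}, I would replace $A$ by a suitable twist by a Kähler form. Alternatively, I would pass through the Bergman-kernel metric of $p_*\OO(kmL)$ and use the positivity of relative log-Bergman kernels (Berndtsson--P\u{a}un \cite{BP}, Guenancia \cite{Gue}, Cao--Guenancia--P\u{a}un \cite{CGP}, which handle klt boundaries). The $k$-th root limit gives a psh weight $\varphi$ on $L$ with $\varphi|_{X_0}\le \frac1m\log|s|^2+C$.

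Third, I apply the $L^2$ extension with weight $(m-1)\varphi+\phi_B$ on the line bundle $mL=K_{\X/\Delta}+(m-1)L+B$. The $L^2$ condition $\int_{X_0}|s|^2e^{-(m-1)\varphi-\phi_B}<\infty$ follows from $|s|^{2(m-1)/m}e^{-(m-1)\varphi}\le C$ together with $\int|s|^{2/m}e^{-\phi_B}<\infty$. The latter holds because $s$ vanishes to order $\ge0$ and $b_i<1$ (Hölder). This gives the extension over $p^{-1}(U)$.

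Finally, extension for all sections on every nearby fiber (the argument applies at any $t$) together with upper semicontinuity gives local constancy of $h^0$. Grauert's theorem then yields local freeness of $p_*$ and base change.

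The main obstacle is the second step: obtaining the psh metric on $L$ with the required lower bound in the Kähler, non-projective setting and without positivity of $K+B$. Projective proofs rely on an ample $A$ to get the tower started. For this I would adapt the key construction of \cite{LPWZ} essentially verbatim with the klt weight $e^{-\phi_B}$ inserted. The needed inputs are uniform klt integrability and the fact that all Bergman-type metrics remain psh with the boundary weight. Both follow from the relative SNC coordinates, since $\phi_B$ is itself psh with analytic singularities that are transverse to the fibers. I expect checking that every estimate in \cite{LPWZ} survives this twist, with constants uniform in $t$, to be the delicate part.
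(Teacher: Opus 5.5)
Your outer framework agrees with the paper. Suppose you have $\psi\in\PSH(\X,\theta)$ with $\psi|_{X_0}\ge\frac1m\log|s|^2-C_s$; note that your Step~2 writes this inequality in the wrong direction, while Step~3 needs the lower bound. Then the $L^2$ extension for $mL-K_{\X/\Delta}$ with weight $(m-1)\psi+\phi_B$, the finiteness of $\int_{X_0}|s|^{2/m}F_{B,0}\,\omega_0^n$, and the semicontinuity and Grauert argument are exactly Proposition~\ref{common:weighted-extension}. \textbf{The gap is that the construction of $\psi$, which is the entire difficulty, is not supplied, and neither route you sketch can supply it.}

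The Siu--P\u{a}un tower needs an honest line bundle $A$ whose sections, together with those of $kmL|_{X_0}+A$, start the induction. On a non-projective K\"ahler fiber a K\"ahler class is in general not the first Chern class of any line bundle. A ``twist by a K\"ahler form'' therefore leaves no sections to extend, and the construction of \cite{LPWZ} is not a tower.

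Relative (log-)Bergman kernel metrics on $m(K_{\X/\Delta}+B)$ are built from sections on nearby fibers. If the generic fiber had no sections, they would be identically degenerate, which is precisely the situation to be excluded. A lower bound by $|s|^{2/m}$ on $X_0$, for a section $s$ not yet known to deform, is itself a quantitative $L^{2/m}$ extension from $X_0$, i.e.\ essentially the statement to be proved. The $L^{2/m}$ extension of \cite{BP} used in \cite{HMX18} is proved only for projective families.

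What the paper, following \cite{LPWZ}, does instead is transcendental. The metric $\psi$ is the limit as $\varepsilon\downarrow0$ of solutions $\phi_\varepsilon\in\PSH(\X,\theta+\varepsilon\omega)$ of $\langle(\theta_t+\varepsilon\omega_t+\ddc\phi_{\varepsilon,t})^n\rangle=e^{\phi_{\varepsilon,t}}F_{B,t}\omega_t^n$. These are normalized by the $t$-independent constant $\log(v_\varepsilon/W)$ and controlled by the Di~Nezza--Guedj--Guenancia estimate, which survives $v_\varepsilon\to0$ (Lemma~\ref{common:metric-criterion}). This needs two inputs that do not come from inserting the klt weight into \cite{LPWZ} verbatim.

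\textbf{First}, $\vol(c_1(K_{X_t}+B_t)+\varepsilon[\omega_t])$ must be constant on a fixed disk. This comes from the Hacon--Xie deformation of the K\"ahler MMP, which requires a canonical central pair and $N(\alpha_0)\wedge B_0=0$. The paper therefore:
\begin{enumerate}
\item canonicalizes crepantly (Lemma~\ref{lem:canonicalization});
\item removes $C_0=B_0\wedge N(\alpha_0)$ and extends the MMP for $B-C$;
\item shows via Collins--Tosatti that the removed divisor is contracted (Lemma~\ref{lem:central-exceptionality}), and restores it;
\item concludes by an open--closed argument (Lemma~\ref{lem:clopen-volume}).
\end{enumerate}

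\textbf{Second}, the joint plurisubharmonicity of $\phi_\varepsilon$ is proved on a resolved model, where the density has zeros along exceptional divisors and poles along $\mu^*B$. This is the signed Perron-envelope analysis of Section~\ref{sec:signed-analysis}, which uses $L^p$ rather than $L^\infty$ density bounds and a $\log^+$ boundary term. The orthogonality $\int_{E_{i,t}}P_t^{n-1}=0$ kills the zero-divisor error. Descent and the missing-fiber Lemma~\ref{lem:weighted-missing-fiber} then follow.

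Guenancia's theorem, which you cite, is used in the paper only when $K_{X_t}+B_t$ is nef on every fiber (Theorem~\ref{thm:nef}). There the perturbed classes are K\"ahler and their volumes are intersection numbers. In general the perturbed classes are not nef, and that is exactly where the MMP and the signed analysis are indispensable.
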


We say that the direct image commutes with base change if, for every
holomorphic map $\lambda:S'\to\Delta$ of complex spaces, the natural
map from $\lambda^*p_*\OO_\X\bigl(m(K_{\X/\Delta}+B)\bigr)$ to the
direct image of the pulled-back line bundle on $\X\times_\Delta S'$
is an isomorphism; see also the discussion after
Theorem~\ref{thm:smooth-base}.

Theorem~\ref{thm:main} extends the logarithmic invariance theorem
\cite[Theorem~4.2]{HMX18} from projective families to K\"ahler
families. For $B=0$ it yields the local extension and invariance
statements of \cite[Theorem~1.2]{LPWZ}. For families with K\"ahler
total space it also gives the conclusion of \cite[Theorem~1.6]{FLR}
without the assumption that $X_0$ is projective or that $(X_0,B_0)$
is canonical. Moreover, we do not assume bigness, and the boundary
may have prime components in common with the divisorial negative
part. In particular, the log canonical classes $K_{X_t}+B_t$ need not
be nef, and their volumes may be zero.

For a real divisor $D$ we denote by $\lfloor D\rfloor$ its
coefficientwise round-down. For each $m$ the rational boundary
$m^{-1}\lfloor mB\rfloor$ again has coefficients in $[0,1)$, and,
since $B$ is relative SNC, rounding commutes with restriction to the
fibers. Applying Theorem~\ref{thm:main} to this boundary, we obtain
the following corollary. Its projective case is also contained in
\cite[Theorem~4.2]{HMX18}.

\begin{corollary}\label{cor:round}
Let $p\colon\X\to\Delta$ be as above, and let $B=\sum_i b_iD_i$
be an effective real divisor with relative simple normal crossings
and $0\le b_i<1$. Then, for every integer $m\ge1$, each section of
$
 \OO_{X_0}\bigl(mK_{X_0}+\lfloor mB_0\rfloor\bigr)
$
extends locally to
$\OO_\X\bigl(mK_{\X/\Delta}+\lfloor mB\rfloor\bigr)$.
The corresponding fiber dimensions are locally constant, and the
direct image is locally free and commutes with holomorphic base change.
\end{corollary}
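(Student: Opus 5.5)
The plan is to deduce the corollary directly from Theorem~\ref{thm:main}, applied for each fixed $m$ to the auxiliary rational boundary
\[
 B^{(m)}:=m^{-1}\lfloor mB\rfloor=\sum_i m^{-1}\lfloor mb_i\rfloor\,D_i .
\]
First I check that $B^{(m)}$ satisfies the hypotheses of Theorem~\ref{thm:main}. Its coefficients $m^{-1}\lfloor mb_i\rfloor$ are rational. Since $0\le b_i<1$ we have $0\le\lfloor mb_i\rfloor\le mb_i<m$, so they lie in $[0,1)$. Its support is contained in that of $B$: components with $\lfloor mb_i\rfloor=0$ simply drop out. A subset of the components of a relative SNC divisor is again relative SNC, because in the local charts $(z_1,\ldots,z_n,t)$ one just keeps some of the coordinate hyperplanes $z_j=0$. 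Finally $mB^{(m)}=\lfloor mB\rfloor$ is integral, so $m$ is an admissible integer, and
\[
 m\bigl(K_{\X/\Delta}+B^{(m)}\bigr)=mK_{\X/\Delta}+\lfloor mB\rfloor .
\]

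Next I identify the fiberwise objects. Each $D_i$ is smooth and transverse to the fibers, so $D_i|_{X_t}$ is a reduced divisor. Distinct $D_i$ have no common components on $X_t$, since their intersections are smooth of relative codimension at least two. Hence $\lfloor mB\rfloor|_{X_t}=\sum_i\lfloor mb_i\rfloor D_i|_{X_t}=\lfloor mB_t\rfloor$, where the round-down on the right is taken with respect to the prime components of $B_t$. Here one should note that $D_i|_{X_t}$ may be disconnected and thus split into several primes, but all of these carry the same coefficient, so rounding is unaffected. Using adjunction, $K_{\X/\Delta}|_{X_t}=K_{X_t}$, this gives
\[
 m\bigl(K_{X_t}+B^{(m)}_t\bigr)=mK_{X_t}+\lfloor mB_t\rfloor .
\]

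With these identifications, Theorem~\ref{thm:main} applied to $(\X,B^{(m)})$ and $m$ yields all the conclusions at once: the local extension of sections from $X_0$, the local constancy of $h^0(X_t,mK_{X_t}+\lfloor mB_t\rfloor)$, and the local freeness and base-change property of $p_*\OO_\X(mK_{\X/\Delta}+\lfloor mB\rfloor)$. The only point requiring care, and hence the main (mild) obstacle, is the compatibility of rounding with restriction, including the splitting of $D_i|_{X_t}$ into several components. This was handled above by the relative SNC hypothesis.
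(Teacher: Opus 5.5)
Your proposal is correct and is essentially the paper's own argument: for each fixed $m$ you apply Theorem~\ref{thm:main} to the rational boundary $B^{(m)}=m^{-1}\lfloor mB\rfloor$, which has coefficients in $[0,1)$, is relative SNC, satisfies $mB^{(m)}=\lfloor mB\rfloor$, and whose rounding commutes with restriction to the fibers by the relative SNC hypothesis. Your caveat that $D_i|_{X_t}$ might split into several primes is harmless, though it is unnecessary: each $D_{i,t}$ is connected and hence prime, by Stein factorization of the smooth proper map $D_i\to\Delta$ over the simply connected disk, as noted in the proof of Lemma~\ref{lem:fixed-boundary}.
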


Our second main result concerns the positivity and the volume of
log-adjoint classes in real cohomology. In this result the boundary
coefficients may be real, and the semipositive twist need not be the
first Chern class of a rational line bundle. We first recall some
definitions. Let $Y$ be a complex manifold. The real Bott--Chern
cohomology $H^{1,1}_{\mathrm{BC}}(Y,\R)$ is the space of $d$-closed
real $(1,1)$-forms modulo the forms $\ddc f$ with $f$ smooth and
real. We denote by $[\xi]$ the class of such a form $\xi$, and we
extend $c_1$ linearly to real divisors. Now let $Y$ be a compact
K\"ahler manifold of dimension $n$, and let
$\alpha\in H^{1,1}_{\mathrm{BC}}(Y,\R)$. We say that $\alpha$ is
\emph{K\"ahler} if it contains a K\"ahler form, \emph{nef} if it lies
in the closure of the cone of K\"ahler classes, \emph{pseudoeffective}
if it contains a closed positive $(1,1)$-current, and \emph{big} if it
contains a \emph{K\"ahler current}, that is, a closed $(1,1)$-current
$T$ with $T\ge\omega'$ for some K\"ahler form $\omega'$ on $Y$. The
pseudoeffective classes form a closed convex cone, and the interior
of this cone is the cone of big classes \cite{BouZ}.

Following Boucksom \cite{BouV}, \cite[Section~3.7]{BouZ}, we define
the \emph{volume} of a pseudoeffective class $\alpha$ by
$
 \vol(\alpha)=\sup_T\int_Y T_{\mathrm{ac}}^n,
$
where $T$ ranges over the closed positive $(1,1)$-currents in
$\alpha$ and $T_{\mathrm{ac}}$ denotes the absolutely continuous part
of $T$ in its Lebesgue decomposition. If $\alpha$ is not
pseudoeffective, we put $\vol(\alpha)=0$. A pseudoeffective class is
big if and only if its volume is positive, and the volume of a nef
class $\alpha$ is equal to $\int_Y\alpha^n$ \cite{BouV}. We say that
a line bundle or a real divisor is nef, pseudoeffective or big if its
first Chern class has this property, and we define its volume to be
the volume of its first Chern class.

\begin{theorem}
\label{thm:real-log-volume}
Let $p:\X\to\Delta$ be a proper holomorphic submersion with connected
compact fibers and K\"ahler total space. Let $B=\sum_i b_iD_i$ be an
effective real divisor with relative simple normal crossings and
$0\le b_i\le1$, and let $\beta$ be a smooth closed semipositive real
$(1,1)$-form on $\X$. Put
\[
 \mathfrak a_t=c_1(K_{X_t}+B_t)+[\beta_t].
\]
Then:
\begin{enumerate}
\item if $\mathfrak a_t$ is pseudoeffective for one parameter, it is
      pseudoeffective for every parameter;
\item the function $t\mapsto\vol(\mathfrak a_t)$ is constant.
\end{enumerate}
In particular, if $\mathfrak a_t$ is big for one parameter, then it is
big for every parameter.
\end{theorem}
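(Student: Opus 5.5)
The plan is to reduce to the case of Theorem~\ref{thm:main}, in the form of Corollary~\ref{cor:round}, by approximation, and to prove both statements from two inequalities: lower semicontinuity of $t\mapsto\vol(\mathfrak a_t)$ and pseudoeffectivity, obtained from extension of sections, and upper semicontinuity, obtained from closedness of the pseudoeffective cone and the behaviour of positive currents in families. Fix a K\"ahler form $\omega$ on $\X$. For $\varepsilon>0$ put $B^\varepsilon=(1-\varepsilon)B$, which has coefficients in $[0,1)$. Consider the classes $\mathfrak a_t^\varepsilon=c_1(K_{X_t}+B^\varepsilon_t)+[\beta_t+\varepsilon\omega_t]$. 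We have $\mathfrak a_t^\varepsilon\to\mathfrak a_t$, and $\mathfrak a_t^\varepsilon-\mathfrak a_t=\varepsilon([\omega_t]-c_1(B_t))$. The term $\varepsilon\omega_t$ is K\"ahler, so it dominates $\varepsilon c_1(B_t)$ up to a multiple. More precisely, $\mathfrak a_t+\varepsilon C[\omega_t]$ dominates $\mathfrak a^\varepsilon_t$ for a uniform $C$, since $c_1(D_i)$ is bounded by $C\omega$ on $\X$ near $0$. Volume is continuous on the big cone and monotone, so it suffices to prove constancy of $t\mapsto\vol(c_1(K_{X_t}+B'_t)+[\gamma_t])$ for $B'$ with coefficients in $[0,1)$ and $\gamma$ a smooth closed form that is K\"ahler on $\X$ (Kähler twist $\beta+\varepsilon\omega$). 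Then let $\varepsilon\to0$; the volume is continuous up to the boundary of the pseudoeffective cone (Boucksom), and uniform bounds in $t$ pass to the limit.

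\textbf{Lower bound via extension.} With $\gamma$ K\"ahler, the class $\mathfrak b_t=c_1(K_{X_t}+B'_t)+[\gamma_t]$ must be handled without a line bundle. I would perturb $\gamma$ and $B'$ so that $[\gamma]$ becomes $c_1$ of a rational combination of divisors. This is generally impossible on a K\"ahler $\X$, so instead I would use the analytic route behind Theorem~\ref{thm:main}: the $L^2$ extension with twisted Hermitian line bundles. Approximate $\mathfrak b_0$ from inside by Boucksom's Monge--Amp\`ere volume. Take a K\"ahler current $T_0\in\mathfrak b_0$ with analytic singularities and $\int (T_0)_{ac}^n\ge\vol(\mathfrak b_0)-\delta$. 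Extend its potential to a quasi-psh function on a neighbourhood with $\gamma$-positivity via Ohsawa--Takegoshi applied to $m(K_{\X/\Delta}+B')$ twisted by a smooth metric of curvature $m\gamma$. This works locally on a Stein cover, because $m\gamma$ is locally $dd^c$-exact. The Bergman kernels then give currents $T_t\in\mathfrak b_t$ whose masses of absolutely continuous Monge--Amp\`ere parts are lower semicontinuous, yielding $\vol(\mathfrak b_t)\ge\vol(\mathfrak b_0)-\delta$ near $0$. The same argument at any base point shows the set of $t$ where $\mathfrak b_t$ is big is open. By approximation, the set where $\mathfrak a_t$ is pseudoeffective is open.

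\textbf{Upper bound.} Upper semicontinuity of volume and closedness of pseudoeffectivity follow from the fact that a limit of positive currents $T_{t_k}\in\mathfrak a_{t_k}$ with bounded mass (masses are cohomological, computed against $\omega^{n-1}$) converges weakly on $\X$. More precisely, I would push forward as currents on the total space. This gives a positive current on $X_0$ in $\mathfrak a_0$, so pseudoeffectivity is a closed condition in $t$. For volume, I would use Boucksom's characterisation $\vol(\alpha)=\int\langle\alpha^n\rangle$ via movable intersections. Upper semicontinuity of volume in families follows from semicontinuity of Lelong numbers and the Demailly regularisation done uniformly over the base. Connectedness of $\Delta$ then gives (1) and (2).

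The main obstacle is the lower bound: producing, from a single current on $X_0$ in a genuinely transcendental class, currents on nearby fibres with almost the same volume. This requires the uniform $L^2$ extension estimates of Theorem~\ref{thm:main}'s proof in a twisted, nonalgebraic setting, with constants independent of $m$.
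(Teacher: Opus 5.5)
Your reduction to coefficients below one with a K\"ahler twist, and your closedness and upper-semicontinuity step, are what the paper does. One adjustment is needed: take the twist $\beta+\varepsilon C\omega$ with $C\omega-\theta_B>0$, so that the perturbed class is $\mathfrak a_t+\varepsilon[\eta_t]$ with $\eta$ K\"ahler. With your $\mathfrak a_t+\varepsilon([\omega_t]-c_1(B_t))$, the perturbed class need not be big where $\mathfrak a_t$ is pseudoeffective, so the argument cannot start.

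The genuine gap is the lower bound. Ohsawa--Takegoshi extends sections of an actual holomorphic line bundle with a suitably positive metric near $X_0$. For $c_1(K_{X_t}+B'_t)+[\gamma_t]$ with real $B'$ and transcendental $\gamma$, no multiple is $c_1$ of a line bundle, so there are no sections to extend and no global Bergman kernels on nearby fibres. Local $\ddc$-exactness of $m\gamma$ on Stein pieces only gives local extensions. Their weights do not glue to a quasi-psh function on $p^{-1}(U)$ in the class. Producing such a global extension of a current on $X_0$, with almost the same absolutely continuous mass, is the content of the theorem.

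This cannot be done without using adjointness essentially. Take a smooth family with $X_0\cong\mathbb F_2$ and $X_t\cong\mathbb P^1\times\mathbb P^1$ for $t\ne0$. The class of the $(-2)$-curve plus a small ample class is big on $X_0$ but not pseudoeffective on $X_t$. You name this step as the main obstacle, but the proposal supplies no mechanism for it.

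Even granting the extension step, the scheme does not close. Your lower bound holds only on a neighbourhood depending on $\delta$. Combined with upper semicontinuity, it gives only continuity of $t\mapsto\vol(\mathfrak b_t)$. The big locus is then open, but it need not be closed, since the volumes may tend to zero at its boundary. Likewise, openness of bigness of $\mathfrak a_t+\varepsilon[\omega_t]$ for each $\varepsilon$ does not give openness of pseudoeffectivity of $\mathfrak a_t$, because the neighbourhoods may shrink as $\varepsilon\to0$.

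What is needed is local \emph{constancy} of the perturbed volumes. The paper gets it from Proposition~\ref{prop:local-log-model}, without using sections. It first canonicalizes, removes $B_0\wedge N(\alpha_0)$, and shows that this divisor is contracted (Lemmas~\ref{lem:fixed-boundary} and~\ref{lem:central-exceptionality}). Then the Hacon--Xie K\"ahler MMP and transcendental base-point-free theorem, extended over the base, give $\mu^*\alpha=f^*[\gamma]+[E_\alpha]$ with $E_\alpha$ $f$-exceptional. Hence $\vol(\alpha_t)=\int_{Y_t}\gamma^n$, which is constant in the flat family $Y\to\Delta$. Lemma~\ref{lem:clopen-volume} then gives bigness and constant volume on a disk independent of $\varepsilon$ (Corollary~\ref{cor:common-domain}). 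Only after that does one let $\varepsilon\to0$.
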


For $\beta=0$ and projective families, the invariance of the volume
was proved in \cite[Theorem~1.8(3)]{HMX} and
\cite[Corollary~4.3]{HMX18}, even for log canonical boundaries. We
note that Theorem~\ref{thm:real-log-volume} admits coefficients equal
to one. For $B=\beta=0$, the invariance of pseudoeffectivity was
proved by Tsuji for projective families \cite[Theorem~1.2]{Tsuji} and
by Cao--P\u{a}un for K\"ahler families \cite[Corollary~5.5]{CP25};
Hacon--Xie also treated canonical singularities
\cite[Corollary~7.8]{HX}.

For transcendental twists, Hacon--Li--Rao proved the local constancy
of the volume under the assumptions that the distinguished fiber is
projective, that the adjoint class is big and that the moduli class
is relatively nef \cite[Theorem~1.7]{HLR}.
Theorem~\ref{thm:real-log-volume} removes the assumptions of
projectivity and bigness, but requires the stronger condition that
the twist have a smooth semipositive representative on the total
space. Hence it proves \cite[Conjecture~6.6]{HLR} in this case; we do
not treat arbitrary relatively nef twists. It also extends the volume
theorem for threefolds \cite[Theorem~1.8]{HLR} to all dimensions. For
$B=0$, a K\"ahler twist and big adjoint classes, the constancy of the
volume is contained in \cite[Theorem~1.3]{LPWZ}, where the positivity
of the associated K\"ahler--Einstein metrics is also proved.

If we fix the relative divisors $D_i$ and apply
Theorem~\ref{thm:real-log-volume} to each coefficient vector in
$[0,1]^r$, we see that the coefficient vectors for which the adjoint
class is pseudoeffective form a closed convex set independent of the
fiber, and that each coefficient vector gives the same volume on
every fiber (Corollary~\ref{cor:boundary-volume-function}). As for
extension, Theorem~\ref{thm:smooth-twist} admits twists by rational
line bundles with smooth semipositive metrics, and
Theorem~\ref{thm:smooth-base} gives the local freeness of the direct
images over smooth bases and the global extension over Stein bases.
The assumption that the boundary is relative SNC cannot simply be
omitted. In fact, de Fernex and Hacon \cite{dFH} showed that the
logarithmic invariance may fail when the boundary strata degenerate.

\subsection*{Strategy of the proof}
We prove Theorem~\ref{thm:main} along the analytic approach of
\cite{LPWZ}. The extension is obtained from the Ohsawa--Takegoshi
theorem in the form given by Zhou--Zhu \cite{ZZ}. In order to apply
it to a section $s$ of $m(K_{X_0}+B_0)$, we need a positively curved
singular metric on $K_{\X/\Delta}+B$ whose weight on $X_0$ is bounded
from below by $\frac1m\log|s|^2$ up to a constant, where the norm is
taken with respect to a fixed smooth reference metric
(Section~\ref{sec:common-extension}).

We obtain this metric as the limit, as $\varepsilon\downarrow0$, of
normalized solutions of twisted K\"ahler--Einstein equations on the
fibers in the classes $c_1(K_{X_t}+B_t)+\varepsilon[\omega_t]$, where
$\omega$ is a fixed K\"ahler form on $\X$. The densities of these
equations have poles along $B$. We need two facts on a fixed disk:
the solutions vary plurisubharmonically with $t$, and the perturbed
volumes are independent of $t$. Since the volumes are constant, all
the solutions on the fibers can be normalized by one constant on the
total space, and the estimate of Di~Nezza--Guedj--Guenancia
\cite{DGG}, which does not depend on the volume, controls the
normalized potentials even when these volumes tend to zero.

If the log-adjoint class is nef on every fiber, then the perturbed
classes are K\"ahler. Their volumes are then intersection numbers and
hence constant, and the plurisubharmonic variation follows from
Guenancia's theorem on families of conic K\"ahler--Einstein metrics
\cite{Gue}. This gives the short proof in Section~\ref{sec:nef-proof}.

In general the perturbed classes are not nef. In
Section~\ref{sec:relative-log-geometry} we use the K\"ahler minimal
model program and the transcendental base-point-free theorem of
Hacon--Xie \cite{HX} to construct, near each fiber, a model on which
the pullback of the perturbed class is the sum of the pullback of a
K\"ahler class and an effective exceptional divisor. Here the
boundary may have components in common with the divisorial negative
part of the central class. We therefore remove the common part,
construct the relative model, and then show that the removed divisor
is exceptional, so that it can be restored. An open--closed argument
then shows that the perturbed volumes are constant on a fixed disk.
The same argument proves Theorem~\ref{thm:real-log-volume} in the klt
case, and the case of coefficients equal to one follows by
approximation.

On the model the density has zeros along exceptional divisors, in
addition to the poles along the boundary. In
Section~\ref{sec:signed-analysis} we extend the variation argument of
\cite{LPWZ} to such densities. The poles contribute with a favorable
sign, and the error caused by the zeros vanishes, since the
exceptional divisors are orthogonal to the semipositive class. In
Section~\ref{sec:log-descent} we descend the resulting metrics to the
original family and extend them across the discrete set of parameters
excluded in the construction. In Section~\ref{sec:general-proof} we
apply the metric criterion and complete the proof. Applications and
variants for the boundary are given in
Section~\ref{sec:applications}.

\section{A metric criterion for extension}
\label{sec:common-extension}

In this section we reduce the extension problem to the construction
of certain singular metrics. In Lemma~\ref{common:metric-criterion}
we construct, from solutions of twisted K\"ahler--Einstein equations
on the fibers in the perturbed classes, a positively curved metric
whose singularities on the central fiber are controlled by a given
section. In Proposition~\ref{common:weighted-extension} we verify the
integrability condition required in the $L^2$ extension theorem. The
nef case and the general case will both be deduced from these two
results.

\subsection{Conventions}
\label{sec:notation-conventions}

We put
$
 d^c=\frac{\sqrt{-1}}{4\pi}(\bar\partial-\partial),
$ $
 \ddc=\frac{\sqrt{-1}}{2\pi}\partial\bar\partial,
$
so that $\ddc\log|f|^2=[\operatorname{div}(f)]$ for a holomorphic
function $f\not\equiv0$, where $\operatorname{div}$ denotes the
divisor of zeros; we use the same notation for sections of line
bundles. If $(L,h)$ is a Hermitian line bundle and a local frame $e$
of $L$ satisfies $|e|_h^2=e^{-\varphi}$, then the curvature of $h$ is
$\Theta(h)=\Theta(L,h)=\ddc\varphi$. For a divisor $D$ we denote by
$[D]$ the current of integration over $D$, extended linearly to real
coefficients; in cohomological identities $[D]$ also denotes the
class $c_1(D)$.

Let $Y$ be a complex manifold, and let $\xi$ be a smooth closed real
$(1,1)$-form on $Y$. We denote by $\PSH(Y,\xi)$ the set of
$\xi$-psh functions, that is, functions $v\not\equiv-\infty$ which
are locally the sum of a plurisubharmonic (psh) function and a smooth
function and satisfy $\xi+\ddc v\ge0$ in the sense of currents.
Suppose that $Y$ is a compact K\"ahler manifold of dimension $n$ and
that $[\xi]$ is big. Then the Monge--Amp\`ere measure of
$v\in\PSH(Y,\xi)$ is defined to be the nonpluripolar product
\cite{BEGZ}
\[
 \MA_\xi(v)=\left\langle(\xi+\ddc v)^n\right\rangle.
\]
This measure puts no mass on pluripolar sets, that is, on sets which
are locally contained in $\{w=-\infty\}$ for some psh function $w$.
Its total mass is at most $\vol([\xi])$, and we say that $v$ has
\emph{full mass} if equality holds. If $[\xi]$ is pseudoeffective,
the envelope
\[
 V_\xi=
 \left(\sup\{v\in\PSH(Y,\xi):v\le0\}\right)^*,
\]
where ${}^*$ denotes the upper semicontinuous regularization, is
$\xi$-psh and satisfies $\sup_YV_\xi=0$. For every
$u\in\PSH(Y,\xi)$ the function $u-\sup_Yu$ is a competitor in the
definition of $V_\xi$; hence $u\le V_\xi+\sup_Yu$. Thus $V_\xi$ has
minimal singularities in the sense of \cite[Section~1.4]{BEGZ}, and a
function $u\in\PSH(Y,\xi)$ has \emph{minimal singularities} if and
only if $u-V_\xi$ is bounded. We say that a closed positive current
$\xi+\ddc u$ has minimal singularities if $u$ has minimal
singularities; this does not depend on the choice of the smooth
representative $\xi$ of its class. If $[\xi]$ is big, every function
with minimal singularities has full mass \cite{BEGZ}.

\subsection{Reference data}

We allow a twist by a rational line bundle $M$ with a smooth
Hermitian metric $h_M$ of semipositive curvature on the total space;
the case $M=0$ is the setting of Theorem~\ref{thm:main}. By a
rational line bundle we mean a specified rational tensor power of a
holomorphic line bundle, together with the compatible identifications
used in forming its integral powers. We call an integer $m\ge1$
\emph{admissible} if $mB$ and $mM$ are integral. All identities
between line bundles of degree $m$ below are isomorphisms of
holomorphic line bundles, and not only numerical equivalences. We may
assume that $n\ge1$; if the fibers are points, then $p$ is an
isomorphism and there is nothing to prove.

Fix a K\"ahler form $\omega$ on $\X$. We work over a disk whose
closure is contained in the original base. Put
\[
 A=K_{\X/\Delta}+B+M,\qquad
 B=\sum_{i=1}^{r}b_iD_i,\qquad b_i\in\Q,\quad 0\le b_i<1,
\]
where $B$ is relative SNC and $\beta=\Theta(h_M)\ge0$. The volume
forms $\omega_t^n$ on the fibers define a smooth metric $h_K$ on
$K_{\X/\Delta}$. We choose smooth metrics $h_i$ on
$\mathcal O_\X(D_i)$, denote by $\sigma_i$ the canonical section of
$\mathcal O_\X(D_i)$, and put
\begin{equation}\label{common:reference-data}
 h_B=\prod_i h_i^{b_i},\quad h_A=h_Kh_Bh_M,\quad
 \theta=\Theta(h_A),\quad
 \phi_B=\sum_i b_i\log|\sigma_i|_{h_i}^{2},\quad
 F_B=e^{-\phi_B}.
\end{equation}
Then $[\theta_t]=c_1(A_t)$, and $\phi_B$ is the weight of the singular
metric $h_Be^{-\phi_B}$ on $\OO_\X(B)$, whose curvature current is
$[B]$. These rational metrics become genuine metrics in every degree
in which the corresponding line bundles are integral. Multiplying the
$h_i$ by constants, we may assume that $|\sigma_i|_{h_i}\le1$, so
that $F_B\ge1$. Finally we put
\[
 W=\int_{X_t}\omega_t^n,\qquad d\nu_t=W^{-1}\omega_t^n.
\]
Since $W$ is the fiber integral of the closed form $\omega^n$, it is
independent of $t$, and $\nu_t$ is a probability measure.

\subsection{Normalization and the limiting metric}

The normalization and the passage to the limit follow the proof of
\cite[Theorem~8.1]{LPWZ}, where a metric on $K_{\X/\Delta}$ with
fiberwise minimal singularities is constructed. In the following
lemma we adapt this argument to $A$ and to the singular density
$F_B$. The solutions on the fibers, their positivity on the total
space and the constancy of the volumes are assumed. We normalize the
solutions by a constant independent of the parameter. Then the
normalized solutions remain positive on the total space, and we can
pass to the limit even when the perturbed volumes tend to zero.

\begin{lemma}
\label{common:metric-criterion}
Suppose that, for a sequence of positive numbers
$\varepsilon_j\downarrow0$, the following conditions are satisfied on
a fixed disk:
\begin{enumerate}
\item each class $[\theta_t+\varepsilon_j\omega_t]$ is big and
its volume $v_j>0$ is independent of $t$;
\item there is a function
$\phi_j\in\PSH(\X,\theta+\varepsilon_j\omega)$ whose restriction
to every fiber is a full-mass solution of
\begin{equation}\label{common:unnormalized-equation}
 \MA_{\theta_t+\varepsilon_j\omega_t}(\phi_{j,t})
       =e^{\phi_{j,t}}F_{B,t}\omega_t^n.
\end{equation}
\end{enumerate}
Then, for every admissible integer $m$ and every nonzero section
$s\in H^0(X_0,mA_0)$, there exist a constant $C_s$ and a function
$\psi\in\PSH(\X,\theta)$ whose restriction $\psi_0$ to $X_0$
satisfies
\begin{equation}\label{common:section-barrier}
 \psi_0\ge\frac1m\log|s|_{h_A^m}^{2}-C_s.
\end{equation}
If $[\theta_t]$ is pseudoeffective for every $t$, the same
construction gives a function $\psi\in\PSH(\X,\theta)$ whose
restriction to every fiber has minimal singularities. In both
assertions the volumes $v_j$ may tend to zero.
\end{lemma}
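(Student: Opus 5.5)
The plan is to normalize each $\phi_j$ by one constant depending only on $j$, and then pass to a limit in $j$. Since the volume $v_j$ does not depend on $t$, I put
\[
 \psi_j=\phi_j-\log\frac{v_j}{W},
\]
which is again $(\theta+\varepsilon_j\omega)$-psh on the total space $\X$, because only a constant was subtracted. On each fiber, \eqref{common:unnormalized-equation} becomes
\[
 \MA_{\theta_t+\varepsilon_j\omega_t}(\psi_{j,t})=v_j\,e^{\psi_{j,t}}F_{B,t}\,d\nu_t .
\]
Integrating and using full mass gives $\int_{X_t}e^{\psi_{j,t}}F_{B,t}\,d\nu_t=1$ for every $t$ and $j$. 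Since $F_B\ge1$, Jensen's inequality yields $\int\psi_{j,t}\,d\nu_t\le0$. The compactness of $\theta_t+\varepsilon_j\omega_t$-psh functions normalized in $L^1(\nu_t)$, uniform in $t$ on a smaller disk because the classes lie in a bounded set, then gives $\sup_{X_t}\psi_{j,t}\le C$ uniformly in $t$ and $j$.

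The key step is a uniform lower bound. For the first assertion I need $\psi_{j,0}\ge\frac1m\log|s|^2_{h_A^m}-C_s$ with $C_s$ independent of $j$. The natural comparison function is $u_s=\frac1m\log|s|^2_{h_A^m}$. It is $\theta_0$-psh, hence $(\theta_0+\varepsilon_j\omega_0)$-psh, and it satisfies $\int e^{u_s}F_{B,0}\,d\nu_0<\infty$, since $|s|^{2/m}$ vanishes to order at least $b_i$ along $D_i$ and compensates the pole of $F_{B,0}$.

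I then invoke the estimate of Di~Nezza--Guedj--Guenancia \cite{DGG}. For solutions of $\MA(\psi)=v\,e^{\psi}f\,d\nu$ normalized so that $\int e^\psi f\,d\nu=1$, it bounds $\psi$ from below by any competitor $u$ with $\int e^{u}f\,d\nu<\infty$. The bound depends only on the $L^1$ norm of $e^u f$ and on the upper bounds, and not on the volume $v$. Concretely, the comparison principle gives $\psi\ge u-C$, because on $\{\psi<u-C\}$ the measure $\MA(\psi)$ dominates $\MA(u)$ in the sense of nonpluripolar products. I expect the main obstacle to be checking that this constant is uniform as $v_j\to0$ and that only full mass is used. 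For the second assertion I would use $u=V_{\theta_t+\varepsilon_j\omega_t}$, or better $V_{\theta_t}$, which lies below all of these envelopes. The klt condition $b_i<1$ makes $F_{B,t}$ lie in $L^p$ for some $p>1$, and this gives $\psi_{j,t}\ge V_{\theta_t}-C$ uniformly in $t$ and $j$.

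Finally I pass to the limit. The family $\psi_j$ is uniformly bounded above on $\X$, lies in $\PSH(\X,\theta+\varepsilon_1\omega)$, and is not uniformly $-\infty$ because of the lower bounds. I therefore set $\psi=(\limsup_j\psi_j)^*$, possibly after replacing $\psi_j$ by $(\sup_{k\ge j}\psi_k)^*$ to obtain a decreasing sequence. Then $\psi$ is $(\theta+\varepsilon_j\omega)$-psh for every $j$, hence $\theta$-psh.

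The lower bounds pass to the limit pointwise on $X_0$, respectively on each $X_t$. This uses the fact that the restriction of the upper regularization on $\X$ is at least the limsup of the restrictions, together with $\psi_{j,0}\ge u_s-C_s$. The restriction $\psi_t$ is $\theta_t$-psh and, as shown above, is bounded above by $C$. Combined with $\psi_t\ge V_{\theta_t}-C$, this means $\psi_t$ has minimal singularities. Nothing in the argument used a positive lower bound on $v_j$.
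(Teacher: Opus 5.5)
Your architecture is the paper's: the normalization $\psi_j=\phi_j-\log(v_j/W)$ with $\int_{X_t}e^{\psi_{j,t}}F_{B,t}\,d\nu_t=1$, a uniform upper bound, a lower bound from \cite{DGG} that is uniform in $j$ and $t$, and the decreasing limit of $(\sup_{k\ge j}\psi_k)^*$. The gap is in your key step for the section barrier, where the mechanism you describe yields nothing. The comparison principle bounds $\int_{\{\psi<u\}}\MA(u)$ by $\int_{\{\psi<u\}}\MA(\psi)$. So the domination of $\MA(u)$ by $\MA(\psi)$ on $\{\psi<u-C\}$ is exactly what it permits, not a contradiction. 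To conclude you would need $u_s-C$ to be a subsolution, $\MA(u_s)\ge v_je^{u_s-C}F_{B,0}\,d\nu_0$. Since $\theta_0+\ddc u_s=m^{-1}[\operatorname{div}(s)]$, one has $\MA_{\theta_0+\varepsilon_j\omega_0}(u_s)=\varepsilon_j^n\omega_0^n$. This dominates $v_je^{u_s-C}F_{B,0}\,d\nu_0$ only if two conditions hold: $|s|_{h_A^m}^{2/m}F_{B,0}$ is bounded, and $C\ge\log\bigl(v_j/(\varepsilon_j^nW)\bigr)+O(1)$. The first fails in general. The function $|s|_{h_A^m}$ is merely bounded and need not vanish on $B_0$, because the poles are absorbed into the bundle $mA_0$; your integrability of $e^{u_s}F_{B,0}$ holds simply because $F_{B,0}\in L^1$. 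The second makes $C$ unbounded in $j$, e.g.\ whenever $v_j$ stays away from $0$. Nor is \cite[Theorem~1.9]{DGG} a comparison with an arbitrary $u$ controlled by $\|e^uf\|_{L^1}$. It is the envelope bound $\psi_{j,t}-\sup_{X_t}\psi_{j,t}\ge V_{\theta_t+\varepsilon_j\omega_t}-C$, where $C$ depends on $n$, on an $L^p$ bound ($p>1$) for the normalized density and on the uniform Skoda constants, but not on $v_j$.

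The repair is the paper's, and it uses the tool you already invoke for the second assertion. Apply the envelope bound at $t=0$; no pseudoeffectivity of the $[\theta_t]$ is needed there, since the perturbed class is big. Then note that $u_s-\sup_{X_0}u_s$ is a competitor for $V_{\theta_0+\varepsilon_j\omega_0}$. Note also that $\sup_{X_0}\psi_{j,0}\ge-\log C_F$, because $1=\int e^{\psi_{j,0}}F_{B,0}\,d\nu_0\le e^{\sup\psi_{j,0}}\|F_{B,0}\|_{L^1(\nu_0)}$. You must also verify the hypothesis of \cite[Theorem~1.9]{DGG} that the solution has minimal singularities, which you flag but leave open. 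Fix the probability measure $\mu_{j,t}=e^{\psi_{j,t}}F_{B,t}\,d\nu_t$, whose density lies in $L^p$ by your upper bound. Then \cite[Theorems~3.1 and~4.1]{BEGZ} give a solution of $\MA=v_j\mu_{j,t}$ with minimal singularities. Uniqueness among full-mass solutions shows that $\psi_{j,t}$ differs from it by a constant. With these corrections, the rest of your argument agrees with the paper's proof: the upper bound, the decreasing regularized suprema, and the conclusion via $V_{\theta_t+\varepsilon_j\omega_t}\ge V_{\theta_t}$ and $\psi_t-C\le V_{\theta_t}$.
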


\begin{proof}
\emph{Step 1: Normalization.}
Discarding finitely many terms, we may assume that
$0<\varepsilon_j\le1$. Since the reference forms are smooth and the
closed base disk is compact, we have
$\theta_t+\varepsilon_j\omega_t\le C_0\omega_t$ for a constant $C_0$
independent of $j$ and $t$. Hence every
$(\theta_t+\varepsilon_j\omega_t)$-psh function is
$C_0\omega_t$-psh. By the family version of Skoda's integrability
theorem \cite[Theorem~3.4, with Assumption~3.2(2)]{DGG}, there exist
$\alpha>0$ and $H\ge1$ such that
\begin{equation}\label{common:skoda}
 \int_{X_t}e^{-\alpha(v-\sup_{X_t}v)}d\nu_t\le H
 \quad\text{for }v\in\PSH(X_t,\theta_t+\varepsilon_j\omega_t).
\end{equation}
The local triviality required in this assumption refers to local
holomorphic product charts on the total space, which exist since $p$
is a submersion; the fibers need not be biholomorphic to each other.
The constants $\alpha$ and $H$ depend only on the fixed background
family, and not on $v_j$.

Fix $q>1$ such that $qb_i<1$ for all $i$. Then
\begin{equation}\label{common:boundary-Lq}
 \sup_t\|F_{B,t}\|_{L^q(\nu_t)}\le C_F.
\end{equation}
In fact, we cover the family over a slightly smaller closed disk by
finitely many relative SNC charts. In each chart the only unbounded
factor of $F_B^q$ is $\prod_i|z_i|^{-2qb_i}$, and
$\int_0^{r_0}r^{\,1-2qb_i}\,dr<\infty$ for a fixed small coordinate
radius $r_0>0$, while the other factors and the volume densities on
the fibers are uniformly bounded.

Let
\[
 w_{j,t}=\phi_{j,t}-\sup_{X_t}\phi_{j,t},
 \qquad
 J_{j,t}=\int_{X_t}e^{w_{j,t}}F_{B,t}\,d\nu_t.
\]
By Jensen's inequality, \eqref{common:skoda} gives
$\int w_{j,t}\,d\nu_t\ge-\alpha^{-1}\log H$. Applying Jensen's
inequality once more and using $F_B\ge1$, we obtain
$J_{j,t}\ge\int e^{w_{j,t}}\,d\nu_t\ge H^{-1/\alpha}$. Combining this
with $w_{j,t}\le0$ and \eqref{common:boundary-Lq}, we get
\begin{equation}\label{common:J-bounds}
 H^{-1/\alpha}\le J_{j,t}\le C_F,\qquad
 \left\|\tfrac{e^{w_{j,t}}F_{B,t}}{J_{j,t}}\right\|_{L^q(\nu_t)}
       \le H^{1/\alpha}C_F.
\end{equation}
Integrating \eqref{common:unnormalized-equation} over $X_t$, we have
\[
 v_j=e^{\sup_{X_t}\phi_{j,t}}WJ_{j,t}.
\]
Hence the functions
\begin{equation}\label{common:normalized-potentials}
 u_j=\phi_j-\log(v_j/W)
\end{equation}
satisfy
\[
 u_{j,t}=w_{j,t}-\log J_{j,t},\qquad
 v_j^{-1}\MA_{\theta_t+\varepsilon_j\omega_t}(u_{j,t})
       =e^{u_{j,t}}F_{B,t}\,d\nu_t.
\]
Since the constant subtracted in
\eqref{common:normalized-potentials} is independent of $t$, we have
$u_j\in\PSH(\X,\theta+\varepsilon_j\omega)$.

\medskip\noindent
\emph{Step 2: Uniform bounds relative to the envelopes.}
Fix $j$ and $t$. Since $u_{j,t}$ and $w_{j,t}$ differ by a constant,
$w_{j,t}$ has full mass, and
\begin{equation}\label{common:normalized-w-equation}
\frac{1}{v_j}\MA_{\theta_t+\varepsilon_j\omega_t}(w_{j,t})
=\frac{e^{w_{j,t}}F_{B,t}}{J_{j,t}}\,d\nu_t
=:d\mu_{j,t}.
\end{equation}
By the definition of $J_{j,t}$, $\mu_{j,t}$ is a probability measure,
and by \eqref{common:J-bounds} its density is bounded in $L^q(\nu_t)$
uniformly in $j$ and $t$.

We first show that $w_{j,t}$ has minimal singularities. By
\cite[Theorems~3.1 and~4.1]{BEGZ}, there is, for the fixed measure
$\mu_{j,t}$, a potential $\widetilde w_{j,t}$ with minimal
singularities such that
$
 \MA_{\theta_t+\varepsilon_j\omega_t}(\widetilde w_{j,t})
 =v_j\,\mu_{j,t}
 =\MA_{\theta_t+\varepsilon_j\omega_t}(w_{j,t}).
$
Both potentials have full mass. Hence, by the uniqueness statement
in \cite[Theorem~3.1]{BEGZ}, they differ by a constant, and
$w_{j,t}$ also has minimal singularities.

We now apply the envelope estimate \cite[Theorem~1.9]{DGG} to
$w_{j,t}$, which satisfies $\sup_{X_t}w_{j,t}=0$, and obtain
\[
 w_{j,t}\ge V_{\theta_t+\varepsilon_j\omega_t}-C,
\]
where $C$ depends only on $n$, $q$, the constants in
\eqref{common:skoda} and the bound for the density in
\eqref{common:J-bounds}. In particular, $C$ is independent of $j$ and
$t$. This estimate concerns normalized Monge--Amp\`ere measures and
requires no positive lower bound for $v_j$. In a
K\"ahler class a potential has minimal singularities if and only if
it is bounded; this is the case needed in Section~\ref{sec:nef-proof}.

Since $u_{j,t}=w_{j,t}-\log J_{j,t}$, the inequality $w_{j,t}\le0$
and the lower bound for $J_{j,t}$ give an upper bound for $u_j$, and
the envelope estimate and the upper bound for $J_{j,t}$ give a lower
bound. Namely,
\begin{equation}\label{common:uniform-envelopes}
 u_j\le C_1,\qquad
 u_{j,t}\ge V_{\theta_t+\varepsilon_j\omega_t}-C_2,
\end{equation}
where we may take $C_1=\alpha^{-1}\log H$ and $C_2=C+\log C_F$.

Next we derive a lower bound in terms of the given section. Put
\[
 \chi_s=\frac1m\log|s|_{h_A^m}^{2},\qquad
 a_s=\sup_{X_0}\chi_s.
\]
By the Poincar\'e--Lelong formula we have
$\theta_0+\ddc\chi_s=m^{-1}[\operatorname{div}(s)]\ge0$.
Hence $\chi_s-a_s$ is a competitor in the definition of
$V_{\theta_0+\varepsilon_j\omega_0}$, and by
\eqref{common:uniform-envelopes} we obtain
\begin{equation}\label{common:approximate-barrier}
 u_{j,0}\ge\chi_s-a_s-C_2 .
\end{equation}

\medskip\noindent
\emph{Step 3: The limiting metric.}
Consider the upper envelopes on the total space
\[
 U_N=(\sup_{j\ge N}u_j)^*,
 \qquad \psi=\lim_{N\to\infty}U_N.
\]
For $j\ge N$ we have $\varepsilon_j\le\varepsilon_N$; hence $u_j$ is
also $(\theta+\varepsilon_N\omega)$-psh, and, by the common upper
bound in \eqref{common:uniform-envelopes}, $U_N$ is
$(\theta+\varepsilon_N\omega)$-psh. The sequence $(U_N)$ is
decreasing, since the supremum is taken over smaller and smaller sets
of indices. On a coordinate chart we choose smooth functions $a,r$
with $\ddc a=\theta$ and $\ddc r=\omega$ and with $r\ge0$. Then the
functions $U_N+a+\varepsilon_Nr$ are psh and decreasing. Since the
manifold $\X$ is connected, the limit $\psi$ is either $\theta$-psh or
identically $-\infty$.

The lower bound given by the section excludes the second possibility.
In fact, every $u_{j,0}$ satisfies \eqref{common:approximate-barrier},
and this lower bound is preserved when we take a supremum and then
the upper regularization. Hence it holds for $U_N|_{X_0}$, and
therefore for $\psi_0$. In particular, $\psi$ is finite at every
point of $X_0$ where $s$ does not vanish. Thus
$\psi\in\PSH(\X,\theta)$, and \eqref{common:section-barrier} holds
with $C_s=a_s+C_2$.

Finally suppose that $[\theta_t]$ is pseudoeffective for every $t$.
In this case we need no section. Every competitor for $V_{\theta_t}$
is also a competitor for $V_{\theta_t+\varepsilon_j\omega_t}$; hence
$V_{\theta_t+\varepsilon_j\omega_t}\ge V_{\theta_t}$, and the above
argument gives $\psi_t\ge V_{\theta_t}-C_2$ on every fiber. On the
other hand, $\psi_t-C_1$ is $\theta_t$-psh and nonpositive, so that
$\psi_t\le V_{\theta_t}+C_1$. Thus $\psi_t$ has minimal
singularities.
\end{proof}

\subsection{From the limiting metric to extension}

The lower bound \eqref{common:section-barrier} gives the
integrability needed for the extension. In fact, if we multiply the
squared norm of $s$ by $e^{-(m-1)\psi}$, we obtain a function bounded
by a constant multiple of $|s|^{2/m}$, and the remaining boundary
weight $F_B$ is integrable, since every coefficient of $B$ is less
than one. The following proposition is a consequence of the extension
theorem of Zhou--Zhu for singular metrics
\cite[Theorem~1.1 and Remark~1.3]{ZZ}. In the proof we specify the
adjoint bundle and verify the integrability condition of that
theorem.

\begin{proposition}
\label{common:weighted-extension}
Let $m$ be admissible, and suppose that for every nonzero
$s\in H^0(X_0,mA_0)$ there is a metric $h_Ae^{-\psi}$ of positive
curvature, that is, with $\psi\in\PSH(\X,\theta)$, satisfying
\eqref{common:section-barrier}. Then every section of $mA_0$ extends
over a neighborhood of $X_0$. If this hypothesis is satisfied at
every parameter, $X_0$ being replaced by the corresponding fiber,
then $p_*\mathcal O_\X(mA)$ is locally free and commutes with base
change.
\end{proposition}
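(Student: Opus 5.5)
We write the extension problem as an Ohsawa--Takegoshi problem for the adjoint bundle
\[
 mA=K_{\X/\Delta}+L,\qquad L=(m-1)A+B+M,
\]
and equip $L$ with the singular metric
\[
 h_L=h_A^{m-1}e^{-(m-1)\psi}\cdot h_Be^{-\phi_B}\cdot h_M ,
\]
where $\psi$ is the function supplied by the hypothesis for the given section $s$. Its curvature current is
\[
 (m-1)(\theta+\ddc\psi)+[B]+\beta\ge0 .
\]
Indeed, $\theta+\ddc\psi\ge0$ because $\psi\in\PSH(\X,\theta)$, $[B]\ge0$, and $\beta\ge0$. Since $K_\Delta$ is trivial on the disk, $K_\X$ and $K_{\X/\Delta}$ may be identified via $dt$. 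Hence \cite[Theorem~1.1 and Remark~1.3]{ZZ}, applied to the hypersurface $X_0=\{t=0\}$ cut out by the bounded function $t$, produces a holomorphic extension $S$ of $s$ over $p^{-1}(U)$. This extension is taken in the multiplier-ideal sense, so it exists as soon as
\[
 \int_{X_0}|s|^2_{h_K^{-1}\otimes h_L}\,dV<\infty .
\]
We do not need the $L^2$ bound on $S$ itself.

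\emph{Integrability condition.} The pointwise norm in this integral equals, up to smooth positive factors,
\[
 |s|^2_{h_A^m}\,e^{-(m-1)\psi_0}\,e^{-\phi_{B,0}}\,\omega_0^n
 \quad\text{(up to bounded factors)} .
\]
By \eqref{common:section-barrier},
\[
 e^{-(m-1)\psi_0}\le e^{(m-1)C_s}\,|s|_{h_A^m}^{-2(m-1)/m},
\]
so the integrand is bounded by $C\,|s|_{h_A^m}^{2/m}F_{B,0}$. The factor $|s|^{2/m}$ is bounded on the compact fiber $X_0$. The factor $F_{B,0}=\prod_i|\sigma_i|^{-2b_i}$ is integrable because $b_i<1$ and $B_0$ is SNC on $X_0$; this is the local computation already used for \eqref{common:boundary-Lq}. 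One point needs a brief remark: on the set where $s$ vanishes, the bound involves $\psi_0$, which may equal $-\infty$ only on a pluripolar set, and that set has measure zero. So the condition holds. Since the hypothesis gives such a $\psi$ for every $s$, and $H^0(X_0,mA_0)$ is finite dimensional, we can extend a basis over one common small disk $U$, hence extend every section.

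\emph{Local freeness and base change.} Assume the hypothesis holds at every $t$. Then every fiber section extends locally, so the restriction map
\[
 (p_*\OO_\X(mA))_t\to H^0(X_t,mA_t)
\]
is surjective for all $t$. By Grauert's semicontinuity, $t\mapsto h^0(X_t,mA_t)$ is upper semicontinuous. Surjectivity of restriction from a neighborhood gives lower semicontinuity. More precisely, extending a basis at $t_0$ yields sections whose restrictions remain independent near $t_0$. So the dimension is locally constant, and Grauert's base change theorem on the reduced one-dimensional base $\Delta$ gives local freeness of the direct image and that the natural map $p_*\OO(mA)\otimes\CC(t)\to H^0(X_t,mA_t)$ is an isomorphism. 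For an arbitrary base change $\lambda\colon S'\to\Delta$, possibly non-reduced, we use the cohomology-and-base-change theorem. Surjectivity of $R^0$ onto fiber sections at every point, with $p_*$ locally free, implies that base change holds in degree $0$ for all $S'$. This is the standard criterion: surjectivity of $\varphi^0(t)$ implies that $\varphi^0$ is an isomorphism in a neighborhood, for arbitrary base change. The paper's later discussion after Theorem~\ref{thm:smooth-base} presumably elaborates this point.

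\emph{Main obstacle.} The delicate step is checking that the Zhou--Zhu theorem applies with a merely quasi-psh weight $\psi$ that may be $-\infty$ along $X_0$ off the support of the divisor of $s$. Two things have to be confirmed: first, that the theorem, via its Remark~1.3, allows weights whose restriction to the hypersurface is not identically $-\infty$; second, that it requires only local $L^2$ integrability on $X_0$ rather than a global Kähler/Stein hypothesis. For the second, we would work on the weakly pseudoconvex Kähler manifold $p^{-1}(U)$, which is the preimage of a disk, so that a psh exhaustion exists. Restricting to a relatively compact sub-disk ensures the needed bounds on $\theta$ and $h_A$.
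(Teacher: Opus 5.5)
Your proposal is correct and follows essentially the same route as the paper. Your metric $h_L$ is exactly the paper's $H_m=h_A^mh_K^{-1}e^{-(m-1)\psi-\phi_B}$ on $L_m=(m-1)A+B+M$, the bound $|s|^2e^{-(m-1)\psi_0}\le C|s|^{2/m}$ combined with the integrability of $F_{B,0}$ is the same $L^2$ estimate, Zhou--Zhu is applied on the weakly pseudoconvex K\"ahler manifold $p^{-1}(U)$ in the same way, and local freeness and base change come, as in the paper, from extending a basis, upper semicontinuity, Grauert's theorem and the analytic base change theorem (your formula covers $m=1$ uniformly, where admissibility forces $B=0$, and you should state that $\OO_\X(mA)$ is flat over $\Delta$, which both Grauert's theorem and base change require).
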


\begin{proof}
\emph{Step 1: The adjoint metric and the $L^2$ condition.}
First assume that $m\ge2$, and consider
\[
 L_m=mA-K_{\X/\Delta}=(m-1)A+B+M.
\]
Although $(m-1)A$ need not be integral, $L_m$ is a holomorphic line
bundle, since $m$ is admissible. Besides the smooth metric
$h_A^mh_K^{-1}$, it carries the singular metric
\[
 H_m=h_A^mh_K^{-1}e^{-(m-1)\psi-\phi_B},
\]
whose curvature is
\begin{equation}\label{common:auxiliary-curvature}
 \Theta(H_m)=(m-1)(\theta+\ddc\psi)+[B]+\beta\ge0.
\end{equation}
By \eqref{common:section-barrier} the weight of $H_m$ is not
identically $-\infty$ on $X_0$, so that $H_m$ restricts to $X_0$. We
regard $s$ as a section of $K_{X_0}+L_m|_{X_0}$. Up to the fixed
normalization of the canonical metric, its $L^2$ norm satisfies
\begin{align}
 \int_{X_0}|s|_{h_A^m}^{2}
      e^{-(m-1)\psi_0-\phi_{B,0}}\omega_0^n
 &\le C_s'
      \int_{X_0}|s|_{h_A^m}^{2/m}F_{B,0}\omega_0^n
       <\infty .                                      \label{common:L2-bound}
\end{align}
The last integral is finite by \eqref{common:boundary-Lq}, since
$|s|_{h_A^m}$ is bounded on the compact fiber.

\medskip\noindent
\emph{Step 2: Extension.}
Choose a smaller disk $U=\{|t|<r\}\Subset\Delta$, and put
$\X_U=p^{-1}(U)$. Then $\X_U$ is K\"ahler and weakly pseudoconvex. In
fact, $\rho(t)=-\log(1-|t|^2/r^2)$ is a smooth subharmonic exhaustion
function of $U$, and $\rho\circ p$ is a psh exhaustion function of
$\X_U$, since $p$ is proper. Since $p$ is a submersion, $X_0=\{t=0\}$
is a smooth hypersurface along which $dt$ does not vanish; in
particular, its normal bundle is trivial.

We apply the extension theorem of Zhou--Zhu for singular metrics
\cite[Theorem~1.1 and Remark~1.3]{ZZ}, in the formulation for fibers
\cite[Theorem~3.1 and Remark~3.2]{LPWZ}, to the line bundle $L_m$
with the metric $H_m$. Its curvature is semipositive by
\eqref{common:auxiliary-curvature}, and, by adjunction, the $L^2$
integral on $X_0$ required in that theorem is the one in
\eqref{common:L2-bound}, up to the fixed normalization of the
canonical metric. Hence $s\wedge dt$ extends to a holomorphic section
$\widetilde S$ of $K_{\X_U}\otimes L_m$ on $\X_U$. By the definition
of $L_m$ we have
$
 K_{\X_U}\otimes L_m
 \simeq \mathcal O_{\X_U}(mA)\otimes p^*K_U,
$
and $dt$ is a nowhere vanishing frame of $K_U$. Hence
$\widetilde S=\widetilde s\otimes p^*dt$ for a unique section
$\widetilde s$ of $mA$ on $\X_U$, and $\widetilde s|_{X_0}=s$.

If $m=1$, then every $b_i$ is an integer by admissibility, so that
$B=0$, since $0\le b_i<1$; moreover, $M$ is integral. Then $L_1=M$,
and we apply the same extension theorem directly to the smooth
semipositive metric of $M$, without the limiting weight $\psi$.

\medskip\noindent
\emph{Step 3: Local freeness and base change.}
Suppose first that $H^0(X_0,mA_0)\ne0$, and extend a basis of this
space. We choose finitely many points of $X_0$ and local frames such
that the evaluation matrix of this basis has a nonzero maximal minor.
Since $p$ is a submersion, these points extend to local holomorphic
sections of $p$, and the same minor does not vanish for nearby
parameters. Hence the extended sections are linearly independent,
and $h^0(X_t,mA_t)\ge h^0(X_0,mA_0)$ near $0$. The reverse inequality
follows from upper semicontinuity; if the central space is zero,
upper semicontinuity alone gives local constancy. Applying the same
argument at every parameter, we see that $h^0(X_t,mA_t)$ is locally
constant. By Grauert's theorem \cite{Grauert}, the direct image is
locally free, and each of its fibers is identified with the
corresponding space of sections. Since $p$ is smooth and
$\mathcal O_\X(mA)$ is a line bundle, $\mathcal O_\X(mA)$ is flat over
the disk. Hence, by the base change theorem for analytic cohomology
\cite[Chapter~III, Theorem~3.4(b),(d)]{BS}, the direct image commutes
with arbitrary holomorphic base change.
\end{proof}

\section{An analytic proof in the nef case}
\label{sec:nef-proof}

In this section we assume that the log-adjoint class is nef on every
fiber. In this case the extension theorem can be proved by means of
conic K\"ahler--Einstein metrics, pluripotential estimates and the
$L^2$ extension theorem alone, without the birational constructions
of the general case. The untwisted case with empty boundary is due to
Deng--Ning--Wang--Zhou \cite[Theorems~4.2 and~5.1]{DNWZ} and
Chen--Rao--Wang \cite[Theorem~1.2]{CRW}. Our normalization argument
follows their method, and for the logarithmic case we use Guenancia's
positivity theorem for families of conic K\"ahler--Einstein metrics
\cite{Gue}.

\begin{theorem}\label{thm:nef}
Let $p:\X\to\Delta$ be a proper holomorphic submersion with
connected compact fibers and K\"ahler total space.  Let
$B=\sum_i b_iD_i$ be a relative SNC divisor with rational
coefficients $0\le b_i<1$, and let $M$ be a rational line bundle
with a smooth Hermitian metric of semipositive curvature on $\X$.
Suppose that
$
 A_t=K_{X_t}+B_t+M_t
$
is nef for every $t\in\Delta$. Then, for every integer $m\ge1$ such
that $mB$ and $mM$ are integral, every section of $mA_0$ extends to a
section of $m(K_{\X/\Delta}+B+M)$ over a neighborhood of $X_0$.
Consequently, $h^0(X_t,mA_t)$ is locally constant, and the direct
image $p_*\mathcal O_\X(m(K_{\X/\Delta}+B+M))$ is locally free and
commutes with holomorphic base change.
\end{theorem}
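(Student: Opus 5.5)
The plan is to verify the two hypotheses of Lemma~\ref{common:metric-criterion} for a sequence $\varepsilon_j\downarrow0$ and then conclude with Proposition~\ref{common:weighted-extension}. Since $A_t$ is nef on every fiber, the class $[\theta_t+\varepsilon\omega_t]=c_1(A_t)+\varepsilon[\omega_t]$ is K\"ahler for every $\varepsilon>0$ and every $t$: a nef class plus a K\"ahler class is K\"ahler. Its volume is therefore the intersection number
\[
 v_\varepsilon=\int_{X_t}(\theta_t+\varepsilon\omega_t)^n .
\]
This is the fiber integral of the closed form $(\theta+\varepsilon\omega)^n$ on $\X$, so it is independent of $t$. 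It is positive because the class is K\"ahler. This gives hypothesis (1). The volumes may tend to zero as $\varepsilon\to0$, but the lemma allows this.

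For hypothesis (2), I solve, on each fiber, the conic twisted K\"ahler--Einstein equation
\[
 (\theta_t+\varepsilon\omega_t+\ddc\phi_t)^n=e^{\phi_t}F_{B,t}\omega_t^n .
\]
The class is K\"ahler and the density $F_{B,t}$ lies in $L^q$ by \eqref{common:boundary-Lq}. Hence existence and uniqueness of a bounded solution follow from Ko\l{}odziej/EGZ-type results, with uniqueness from the comparison principle thanks to the factor $e^{\phi}$. A bounded potential in a K\"ahler class has full mass. Near the boundary these solutions are the conic metrics with cone angles $2\pi(1-b_i)$ along $D_{i,t}$, since $F_B=\prod|\sigma_i|^{-2b_i}$.

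The substantive point is that the family $\phi$ defined by $\phi|_{X_t}=\phi_t$ is $(\theta+\varepsilon\omega)$-psh on the total space $\X$. For this I would invoke Guenancia's theorem \cite{Gue} on families of conic K\"ahler--Einstein metrics. It is stated for the relative twisted canonical class $K_{\X/\Delta}+B$ plus a relatively K\"ahler twist, here $M$ together with $\varepsilon\omega$. It asserts that the fiberwise conic KE metric, viewed as a metric on $K_{\X/\Delta}+B+(\text{twist})$, has semipositive curvature on $\X$, and that the potential extends as a psh weight across the finitely many possibly bad parameters. To fit our setting, I would write the equation as a twisted KE equation with twist form $\beta+\varepsilon\omega+\theta_B$. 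Here $\theta_B$ is the curvature of $h_B$, and $\beta+\varepsilon\omega$ is semipositive, indeed K\"ahler, on $\X$. The metric $h_Ae^{-\phi}$ is then precisely the fiberwise KE metric on $A+\varepsilon\{\omega\}$.

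I expect this step to be the main obstacle. One must check carefully that Guenancia's hypotheses (relative SNC boundary with coefficients in $[0,1)$, a twist that is K\"ahler on the total space, and a K\"ahler class on every fiber) are met. One must also check that his statement, often phrased for big or ample adjoint classes, applies with $\varepsilon\omega$ absorbing the positivity. If needed, I would reprove it along his lines: approximate the conic metrics by smooth ones using the regularized densities $\prod(|\sigma_i|^2+\delta^2)^{-b_i}$, and apply the Schumacher--P\u{a}un computation of the geodesic curvature $c(\phi)$. This computation gives an elliptic inequality
\[
 \Delta_t c(\phi)-c(\phi)\ge -C\,(\text{twist terms})\ge0 ,
\]
so the minimum principle yields $c(\phi)\ge0$. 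One then lets $\delta\to0$, using uniform $L^\infty$ bounds and the fact that psh-ness is preserved under decreasing or locally uniform limits.

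With (1) and (2) established on a fixed disk, Lemma~\ref{common:metric-criterion} produces, for each admissible $m$ and each $s\in H^0(X_0,mA_0)$, a weight $\psi\in\PSH(\X,\theta)$ satisfying \eqref{common:section-barrier}. Proposition~\ref{common:weighted-extension} then gives the extension of $s$. Since nefness holds at every parameter, the same construction works centered at any $t$. Hence the hypothesis of the proposition holds at every parameter, which yields local constancy of $h^0(X_t,mA_t)$, local freeness of the direct image and base change.
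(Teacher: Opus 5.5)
Your proposal is correct and follows the paper's proof essentially step for step. You obtain constancy and positivity of $v_\varepsilon$ as a fiber integral of a closed form in a K\"ahler class, joint $(\theta+\varepsilon\omega)$-plurisubharmonicity of the fiberwise bounded conic solutions from Guenancia's theorem \cite{Gue}, and then apply Lemma~\ref{common:metric-criterion} and Proposition~\ref{common:weighted-extension} at every parameter.

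One bookkeeping correction: in Guenancia's setup the twist is $\gamma=\beta+\varepsilon\omega$ alone, which is semipositive as his positivity theorem requires, while $\sum_i b_i\Theta(h_i)$ enters through the boundary metrics with cone angles $2\pi(1-b_i)$, giving background form $\Theta(h_K)+\sum_i b_i\Theta(h_i)+\gamma=\theta+\varepsilon\omega$. Putting $\theta_B$ into the twist would double-count it and spoil the semipositivity of the twist. Also, in your fallback the Schumacher--P\u{a}un inequality should read $-\Delta_t c(\phi)+c(\phi)\ge0$, which is the form the minimum principle needs.
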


We assume only that the adjoint bundle $A_t$ is nef on every fiber.
No positivity of $K_{X_t}$ or of $B_t$ separately is assumed, and
$\vol(c_1(A_t))$ may be zero. In the proof we verify the hypotheses
of Lemma~\ref{common:metric-criterion}. The perturbed classes are
K\"ahler, and hence their volumes are constant; the positivity of the
perturbed conic metrics on the total space follows from Guenancia's
theorem (Proposition~\ref{nef:conic-input}).

\subsection{Positive variation of conic metrics}

If $A_t$ is nef, then every class $[\theta_t+\varepsilon\omega_t]$
with $\varepsilon>0$ is K\"ahler. We keep the reference data
\eqref{common:reference-data}; in particular,
$F_B=\prod_i|\sigma_i|_{h_i}^{-2b_i}\ge1$ and $\beta=\Theta(h_M)\ge0$.
We apply \cite[Section~1.3 and Section~4]{Gue} with the twisting form
$\gamma=\beta+\varepsilon\omega$ and with cone angles $2\pi(1-b_i)$
along $D_i$, the components with $b_i=0$ being omitted. Then the
background form of Guenancia's equation on the fibers is
$\Theta(h_K)+\sum_i b_i\Theta(h_i)+\gamma=\theta+\varepsilon\omega$.
The construction in \cite[Section~1.3]{Gue} and the positivity
theorem in \cite[Section~4]{Gue} are formulated for a proper smooth
K\"ahler morphism onto a disk; hence we need no compactification.

\begin{proposition}[{\cite[Section~1.3 and Section~4]{Gue}}]
\label{nef:conic-input}
For every $\varepsilon>0$ there is a locally bounded function
$\phi_\varepsilon$ on $\X$ such that
\[
 \theta+\varepsilon\omega+\ddc\phi_\varepsilon\ge0
\]
and, on every fiber,
\begin{equation}\label{nef:cone-equation}
 (\theta_t+\varepsilon\omega_t+
      \ddc\phi_{\varepsilon,t})^n
 =e^{\phi_{\varepsilon,t}}F_{B,t}\omega_t^n.
\end{equation}
Its restriction to each fiber is the unique bounded solution
of this equation.
\end{proposition}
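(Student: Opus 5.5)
The proposition is an application of Guenancia's construction, so the plan is to check that his hypotheses hold in our setting and then read off the three conclusions: existence, positivity on the total space, and uniqueness on the fibers.

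\emph{Fiberwise solutions.} Fix $\varepsilon>0$. Since $A_t$ is nef, the class $[\theta_t+\varepsilon\omega_t]$ is K\"ahler for every $t$. The pair $(X_t,B_t)$ is klt with SNC support, because $0\le b_i<1$. Moreover, $F_{B,t}\in L^q(\omega_t^n)$ for some $q>1$ by \eqref{common:boundary-Lq}. Kolodziej's estimate, in the form of \cite{EGZ} or \cite{BEGZ}, then gives a unique bounded $(\theta_t+\varepsilon\omega_t)$-psh solution of \eqref{nef:cone-equation}. Uniqueness of bounded solutions follows from the comparison principle, since the right-hand side increases in $\phi$ (the factor $e^{\phi}$). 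This solution is exactly Guenancia's conic K\"ahler--Einstein potential with twist $\gamma=\beta+\varepsilon\omega$ and cone angles $2\pi(1-b_i)$. Indeed, the background form in his equation is $\Theta(h_K)+\sum_ib_i\Theta(h_i)+\gamma=\theta+\varepsilon\omega$, and his density $\prod_i|\sigma_i|^{-2b_i}\omega_t^n$ equals $F_{B,t}\omega_t^n$.

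\emph{Gluing and local boundedness.} Define $\phi_\varepsilon$ on $\X$ by $\phi_\varepsilon|_{X_t}=\phi_{\varepsilon,t}$. To see that it is locally bounded, I would use a uniform version of the $L^\infty$ estimate over a compact subdisk. The Skoda constants in \eqref{common:skoda} are uniform, and the $L^q$ bound \eqref{common:boundary-Lq} is uniform. The volume $\int(\theta_t+\varepsilon\omega_t)^n$ is an intersection number, hence independent of $t$, and it is positive. Under these conditions \cite[Theorem~1.9]{DGG}, or Kolodziej's estimate with uniform constants, gives a bound $|\phi_{\varepsilon,t}|\le C_\varepsilon$ uniform in $t$. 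For the upper bound, integrate the equation: this controls $\sup_{X_t}\phi_{\varepsilon,t}$ through Jensen's inequality, as in Step~1 of Lemma~\ref{common:metric-criterion}.

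\emph{Positivity on the total space (main step).} The key point is $\theta+\varepsilon\omega+\ddc\phi_\varepsilon\ge0$ on $\X$, which is the content of \cite[Section~4]{Gue}. There one approximates the conic equation by smooth equations with densities $\prod_i(|\sigma_i|^2+\delta)^{-b_i}$, which are smooth on $\X$, with a twist that remains semipositive. For the smooth twisted K\"ahler--Einstein metrics of these approximations, positivity of the fiberwise metric in the horizontal direction follows from a maximum principle applied to the geodesic curvature $c(\phi)$. This is Schumacher and P\u{a}un's elliptic equation $-\Delta c+c=|\bar\partial v|^2+(\text{twist terms}\ge0)$, and it needs $\theta+\varepsilon\omega$ to be represented with the appropriate positivity, here $\gamma\ge0$. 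The second part is passing to the limit $\delta\to0$. The approximating potentials are uniformly bounded and converge fiberwise to the unique bounded solution, by uniqueness and stability. They are $(\theta+\varepsilon\omega+O(\delta))$-psh on $\X$, so positivity survives in the limit after upper regularization. The limit agrees with $\phi_{\varepsilon,t}$ on every fiber, again by fiberwise uniqueness.

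I expect this limiting step to be the main obstacle: the approximating solutions converge fiberwise, but one must show that the limit is a genuine function on $\X$ whose restriction to every fiber is the solution, with no loss at special parameters. Guenancia handles this, and since his statements are for a proper smooth K\"ahler morphism onto a disk, I would simply cite them. The only things to verify are that $B$ is relative SNC, that the twist $\gamma$ is smooth, closed and semipositive (it is in fact K\"ahler), and that the class on each fiber is K\"ahler. All three hold here.
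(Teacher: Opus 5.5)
Your proposal is correct and is essentially the paper's argument. The paper also proves this only by invoking Guenancia's construction and positivity theorem \cite{Gue} with twist $\gamma=\beta+\varepsilon\omega$ and cone angles $2\pi(1-b_i)$, after checking that his background form is $\theta+\varepsilon\omega$ and that his results are stated for a proper smooth K\"ahler morphism onto a disk, with fiberwise uniqueness making the local constructions glue; your extra fiberwise existence and uniform-bound arguments are supplementary and harmless.

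One caution about your sketch of the cited proof. The twist of the regularized equation with density $\prod_i(|\sigma_i|^2+\delta)^{-b_i}$ is $\gamma+\sum_ib_i\bigl(\Theta(h_i)+\ddc\log(|\sigma_i|^2+\delta)\bigr)$, and $\Theta(h_i)+\ddc\log(|\sigma_i|^2+\delta)$ equals $\frac{\delta}{|\sigma_i|^2+\delta}\Theta(h_i)$ plus a semipositive form. This twist is therefore in general not semipositive near $B$, the error being of order one on $\{|\sigma_i|^2\lesssim\delta\}$, and the naive maximum-principle argument does not give the $(\theta+\varepsilon\omega+O(\delta))$-positivity you assert; that step genuinely rests on the citation.
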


For each fixed $\varepsilon$ the local constructions agree on
overlaps, since the bounded solutions on the fibers are unique. Hence
all the $\phi_\varepsilon$ are defined over the same disk, although
their local bounds may depend on $\varepsilon$.

\subsection{Proof of the nef extension theorem}

\begin{proof}[Proof of Theorem~\ref{thm:nef}]
We may assume that $n\ge1$. Fix an admissible integer $m$.

\medskip\noindent
\emph{Step 1: The hypotheses of Lemma~\ref{common:metric-criterion}.}
Let $0<\varepsilon\le1$. Since $[\theta_t+\varepsilon\omega_t]$ is
K\"ahler, its volume
\[
 v_\varepsilon=\int_{X_t}(\theta_t+\varepsilon\omega_t)^n
\]
is a top intersection number, although the form
$\theta_t+\varepsilon\omega_t$ itself need not be positive. Hence
$v_\varepsilon>0$. Moreover, $v_\varepsilon$ is independent of $t$,
since $\theta+\varepsilon\omega$ is closed on $\X$ and fiber
integration commutes with exterior differentiation. By
Proposition~\ref{nef:conic-input} there is a function
$\phi_\varepsilon\in\PSH(\X,\theta+\varepsilon\omega)$ whose
restrictions to the fibers are bounded solutions of
\eqref{nef:cone-equation}, and bounded potentials in a K\"ahler class
have full mass. Therefore both hypotheses of
Lemma~\ref{common:metric-criterion} are satisfied for every sequence
$\varepsilon_j\downarrow0$. In Step~2 of the proof of that lemma the
normalized potentials are bounded. Hence we use
\cite[Theorem~1.9]{DGG} only for K\"ahler classes, and we need no
positive lower bound for $v_\varepsilon$.

\medskip\noindent
\emph{Step 2: Conclusion.}
By Lemma~\ref{common:metric-criterion}, for every nonzero section
$s\in H^0(X_0,mA_0)$ there is a function $\psi\in\PSH(\X,\theta)$
with
\[
 \psi|_{X_0}\ge\frac1m\log|s|_{h_A^m}^2-C_s.
\]
Applying Proposition~\ref{common:weighted-extension} to the line
bundle
\[
 L_m=mA-K_{\X/\Delta}=(m-1)A+B+M,
\]
we see that $s$ extends; if $m=1$, then $B=0$ by admissibility, and
the smooth semipositive metric on $M$ suffices. The same argument
applies to a disk centered at any parameter. Hence the last assertion
of Proposition~\ref{common:weighted-extension} gives the local
constancy of $h^0(X_t,mA_t)$, the local freeness and the base change
property.
\end{proof}

\begin{remark}\label{nef:scope}
By the last assertion of Lemma~\ref{common:metric-criterion}, the
above argument also gives a positively curved metric on $A$ whose
restriction to every fiber has minimal singularities. We have used
neither the bigness of $A_t$ nor a minimal model. In order to apply
Theorem~\ref{thm:nef} to rounded boundaries as in
Corollary~\ref{cor:round}, we must verify that
$K_{X_t}+\lfloor mB_t\rfloor/m+M_t$ itself is nef, since nefness is
in general not preserved when the boundary is decreased.
\end{remark}

\section{Relative models and invariance of adjoint volumes}
\label{sec:relative-log-geometry}

For the analytic argument we need a model on which a big perturbation
of the log-adjoint class splits into a semipositive class and an
effective exceptional divisor. Such a model is constructed near a
given fiber in Proposition~\ref{prop:local-log-model}. Here the
boundary may have components in common with the divisorial negative
part of the adjoint class, whereas the deformation theorem for the
minimal model program (MMP) requires that these divisors have no
common component. We therefore first remove their common part and
construct the relative model for the reduced boundary. We then show
that the removed divisor is exceptional on this model, so that it can
be restored.

The exceptionality gives the orthogonality needed in
Section~\ref{sec:signed-analysis}. The model also gives the local
constancy of the volume, and an open--closed argument then yields a
fixed disk on which the perturbation can tend to zero. The same
argument proves Theorem~\ref{thm:real-log-volume} in the klt case;
the case of coefficients equal to one then follows by approximation.

All equalities of classes in this section are equalities in the real
Bott--Chern cohomology $H^{1,1}_{\mathrm{BC}}(\cdot,\R)$; on the
normal complex spaces below, Bott--Chern classes and their pullbacks
are understood in the sense of \cite{HX}. By a \emph{modification}
we mean a proper bimeromorphic morphism, and by a bimeromorphic
contraction we mean a bimeromorphic map whose inverse does not
contract any divisor. A prime divisor is said to be exceptional for a
modification if its image has codimension at least two, and a
contraction is said to be small if it has no exceptional prime
divisors.

We recall the singularities of pairs \cite[Section~2.3]{KM98} in the
form needed here. Let $X$ be a complex manifold, and let $B$ be an
effective real divisor on $X$ with simple normal crossing support.
For a modification $g:X'\to X$ from a complex manifold we define
$B_{X'}$ by
\[
 K_{X'}+B_{X'}=g^*(K_X+B),\qquad g_*B_{X'}=B.
\]
For a prime divisor $F$ on $X'$ we call
$a(F;X,B)=-\operatorname{coeff}_F(B_{X'})$ the \emph{discrepancy} of
$(X,B)$ along $F$, and $A_{X,B}(F)=1+a(F;X,B)$ its \emph{log
discrepancy}. We say that $(X,B)$ is \emph{klt} if every coefficient
of $B$ is less than one and $a(F;X,B)>-1$ for every exceptional prime
divisor $F$ of every such modification, \emph{log canonical} if every
coefficient is at most one and $a(F;X,B)\ge-1$ for all such $F$, and
\emph{canonical} if $a(F;X,B)\ge0$ for all such $F$. For pairs with
simple normal crossings, $(X,B)$ is klt if and only if every
coefficient of $B$ is less than one \cite[Corollary~2.31(3)]{KM98}. A
modification $\tau:\widetilde X\to X$ is said to be \emph{crepant}
for pairs $(\widetilde X,\widetilde B)$ and $(X,B)$ if
$K_{\widetilde X}+\widetilde B=\tau^*(K_X+B)$. Generalized pairs
carry, in addition, a nef part. For their discrepancies, for
generalized klt and canonical pairs, and for the terminology of the
K\"ahler minimal model program, we refer to \cite{HX}.

Let $\xi$ be a pseudoeffective class on a compact K\"ahler manifold.
The divisorial negative part of $\xi$ is defined by
\[
 N(\xi)=\sum_D\nu(\xi,D)D,
\]
where $D$ runs over the prime divisors and $\nu(\xi,D)$ is the
minimal multiplicity of $\xi$ along $D$ \cite{BouZ}. Namely, if
$\kappa$ is a K\"ahler form and $T_{\min,\epsilon}$ is a current with
minimal singularities in $\xi+\epsilon[\kappa]$, then
$\nu(\xi,D)=\lim_{\epsilon\downarrow0}\nu(T_{\min,\epsilon},D)$,
where $\nu(T,D)$ denotes the generic Lelong number of $T$ along $D$,
that is, the coefficient of $[D]$ in the Siu decomposition of $T$.
The decomposition $\xi=\bigl(\xi-[N(\xi)]\bigr)+[N(\xi)]$ is called
the divisorial Zariski decomposition of $\xi$ \cite{BouZ}. For
effective divisors $G=\sum_Dg_DD$ and $H=\sum_Dh_DD$ we write
\[
 G\wedge H:=\sum_D\min(g_D,h_D)D
\]
for the coefficientwise minimum (this is not a product of currents).
Thus $G\wedge H=0$ means that $G$ and $H$ have no common prime
component.

\subsection{Extension of the central minimal model program}\label{sec:inputs}

We shall use the extension theorem of Hacon--Xie
\cite[Theorem~7.7]{HX}, whose proof uses the contraction construction
in \cite[proof of Proposition~4.16]{HLR}. These results extend the
central MMP steps together with their contraction diagrams. We state
them in the following form, which also contains the flatness and
normality properties needed below. In \cite[Theorem~7.7]{HX} the
projectivity hypothesis in the original statement of
\cite[Proposition~4.16]{HLR} is replaced by a K\"ahler hypothesis.

\begin{lemma}
\label{lem:relative-mmp-interface}
Let $p:V\to U$ be a proper holomorphic submersion with connected
compact fibers and K\"ahler total space, where $U$ is a relatively
compact disk. Let $\Gamma$ be an effective real relative SNC
boundary, and let $\beta$ be a smooth closed K\"ahler form on $V$,
regarded as the descending nef part of a generalized pair. Put
\[
 a_V=c_1(K_{V/U}+\Gamma)+[\beta].
\]
Assume that $0\in U$, that
$a_{V,0}$ is big, that the central generalized pair is canonical,
and that
\[
 \lfloor\Gamma_0\rfloor=0,
 \qquad N(a_{V,0})\wedge\Gamma_0=0.
\]
Fix a finite sequence of central transcendental MMP steps for this
adjoint class, and write its models as
$V_0^{(0)}=V_0,\ldots,V_0^{(k)}$, where $k$ is the number of steps.
Then, after shrinking $U$, the sequence extends to families
$V^{(0)}=V,\ldots,V^{(k)}$ with the following properties:
\begin{enumerate}
\item The intermediate spaces $V^{(j)}$ are normal proper flat
families over $U$, and their central fibers are the given models
$V_0^{(j)}$. Let $\Gamma^{(j)}$ be the transformed boundary and
$\mathfrak b^{(j)}$ the trace of the nef part on $V^{(j)}$.
They define the real Bott--Chern adjoint class
\[
 a_{V^{(j)}}:=c_1(K_{V^{(j)}/U}+\Gamma^{(j)})+\mathfrak b^{(j)},
\]
whose restriction to $V_0^{(j)}$ is the corresponding central
adjoint class.
\item Each central divisorial step extends to a proper bimeromorphic
contraction morphism with the given central restriction. Each
flipping step extends to a diagram of contraction morphisms
$V^{(j)}\to T^{(j)}\leftarrow V^{(j+1)}$ with common target
$T^{(j)}$. The central restrictions of the two arrows are the given
contraction and its flip; in particular, the two
central flipping contractions are small.
\item Let $V':=V^{(k)}$ be the final model over $U$, with adjoint
class $a_{V'}:=a_{V^{(k)}}$.
Then the composite $V\dashrightarrow V'$ is an
$a_V$-negative bimeromorphic contraction. On a common resolution
$V\xleftarrow{\mu} Z\xrightarrow{r}V'$ we have, by negativity,
\[
 \mu^*a_V=r^*a_{V'}+[F],
 \qquad F\ge0,
 \qquad F\text{ is }r\text{-exceptional}.
\]
\end{enumerate}
Moreover, after further shrinking, the final family and the finitely
many intermediate families have normal fibers.
\end{lemma}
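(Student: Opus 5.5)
The lemma is essentially a repackaging of the Hacon--Xie extension theorem \cite[Theorem~7.7]{HX}, so the plan is to check its hypotheses in our setting and then read off the listed properties, adding the flatness, normality and negativity statements from general facts.

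\emph{Step 1: Hypotheses.} The central generalized pair $(V_0,\Gamma_0,\beta_0)$ is canonical. Since $\lfloor\Gamma_0\rfloor=0$ and $\Gamma$ is relative SNC, it is also klt. The nef part $\beta$ is a K\"ahler form on the total space, so it descends on every model. Bigness of $a_{V,0}$ together with $N(a_{V,0})\wedge\Gamma_0=0$ is exactly the situation of \cite[Theorem~7.7]{HX}: no step of the central MMP contracts a boundary component, so the transformed boundaries remain compatible with the family. The given central steps are either divisorial contractions or flips, which exist by the transcendental base-point-free theorem of \cite{HX}.

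\emph{Step 2: Extending the steps.} We argue by induction on $j$. Suppose $V^{(j)}$ has been constructed with central fiber $V_0^{(j)}$. The central contraction is defined by a class $a_{V_0^{(j)}}+\text{(nef)}$ that is trivial exactly on the extremal face. Following \cite[proof of Proposition~4.16]{HLR}, as modified in \cite[Theorem~7.7]{HX}, we extend this class to $V^{(j)}$, using that the relevant cohomology classes deform over the disk. Relative base-point-freeness over a shrunken disk then gives a contraction $V^{(j)}\to T^{(j)}$ restricting to the central one.
\begin{itemize}
\item In the divisorial case we set $V^{(j+1)}=T^{(j)}$.
\item In the flipping case the flip is the relative log canonical model of $a_{V^{(j)}}$ over $T^{(j)}$. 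It exists over the base and restricts to the central flip.
\end{itemize}
Flatness over the smooth one-dimensional base follows because $V^{(j)}$ is irreducible and normal, hence torsion-free over $\OO_U$. Normality of the total spaces is preserved, since contractions and log canonical models of normal spaces are normal. Smallness of the central flipping contractions is part of the given data. The adjoint class $a_{V^{(j)}}$ is defined by pushforward, and its restriction to the central fiber agrees with the central adjoint class, because the exceptional loci are compatible with restriction.

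\emph{Step 3: Negativity.} Each step is $a$-negative. The standard negativity lemma on a common resolution, applied step by step, yields $\mu^*a_V=r^*a_{V'}+[F]$ with $F\ge0$ and $r$-exceptional. Here we use the Bott--Chern pullback formalism of \cite{HX}.

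\emph{Step 4: Normal fibers.} Normality is an open condition in flat proper families: the locus of non-normal fibers is a closed analytic subset of $U$. The central fibers are normal, so after shrinking $U$ every fiber of each of the finitely many families is normal.

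\emph{Main obstacle.} The delicate step is Step 2 in the flipping case. One must ensure that the extended contraction does not acquire new divisorial components on nearby fibers and that the flip exists over the whole disk, not just centrally. This is exactly where \cite[Theorem~7.7]{HX} and the condition $N(a_{V,0})\wedge\Gamma_0=0$ are needed, and I would cite them rather than reprove them.
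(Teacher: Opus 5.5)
Your proposal follows the paper's proof essentially step for step: you check the hypotheses of \cite[Theorem~7.7]{HX}, take the contraction diagrams from the construction in \cite[proof of Proposition~4.16]{HLR}, get flatness from torsion-freeness over the smooth curve and normal fibers from the openness of normality, and obtain (3) by summing the pulled-back negativity errors of the individual steps. The only difference lies in how you describe the inside of the cited construction. The paper extends each central contraction directly as a morphism via \cite[Theorem~4.7]{HLR}; this matters because your ``relative base-point-freeness'' would need the extended class to be nef on nearby fibers, which is not given a priori. The paper also identifies the central fiber of the glued relative flip with the prescribed flip by comparing discrepancies, which is the justification missing from your bare assertion that the relative log canonical model restricts to the central flip.
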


\begin{proof}
The hypotheses of \cite[Theorem~7.7]{HX} are satisfied. Since the
boundary does not contain the central fiber and the nef part
descends, the generalized discrepancies on that fiber coincide with
the ordinary discrepancies. The contraction diagrams are obtained
from the cited theorem together with
\cite[proof of Proposition~4.16]{HLR}, which is used in its proof.
In that construction the central contraction is first extended to a
morphism by means of \cite[Theorem~4.7]{HLR}. In the flipping case,
local flips over Stein neighborhoods of the target glue to a
morphism, and the comparison of discrepancies identifies its central
restriction with the prescribed flip. In the divisorial case, the
relative log canonical model is isomorphic to the target of the
contraction near the central fiber, and we obtain a contraction
morphism. Repeating this construction, we obtain the diagrams in (2)
and the negative relative sequence.

By relative adjunction, the restrictions of the adjoint classes are
the prescribed central classes. Each normal model dominates the
smooth base curve and is therefore flat over it. The central fibers
are normal by the construction of the central MMP. Since geometric
normality is an open condition in a proper flat family, all fibers
are normal after one further shrinking, common to the finitely many
families. Finally, we pull back the effective exceptional errors of
all the negative steps to a common resolution and add them up. This
gives the equality in (3).
\end{proof}

In order to extend the contraction of the final central model, we
shall also use \cite[Lemma~4.18]{LPWZ}, which we state in the
following form. Let $h_0:X'_0\to Y_0$ be a bimeromorphic morphism
between normal compact K\"ahler spaces with rational singularities,
let $X'\to U$ be a proper flat normal deformation of $X'_0$, and let
$\gamma_0$ be a K\"ahler class on $Y_0$. If
$a_{X'}\in H^{1,1}_{\mathrm{BC}}(X',\R)$ satisfies
$a_{X'}|_{X'_0}=h_0^*\gamma_0$, then, after shrinking, there exist a
proper flat normal family $Y\to U$ with normal fibers, a proper
bimeromorphic morphism $h:X'\to Y$ extending $h_0$, and a K\"ahler
form $\gamma$ on $Y$ such that
\[
 a_{X'}=h^*[\gamma],\qquad [\gamma]|_{Y_0}=\gamma_0.
\]
This is a semiampleness statement for the Bott--Chern class
$a_{X'}$.

\subsection{A simultaneous canonicalization}

Lemma~\ref{lem:relative-mmp-interface} requires that the central pair
be canonical. In the following lemma we make all the fiber pairs
canonical simultaneously, for a relative SNC klt boundary. We use the
discrepancy formula for blowups and the canonicity criterion for SNC
pairs \cite[Lemma~2.29 and Corollary~2.31(3)]{KM98}. The construction
preserves the spaces of sections in every admissible degree.

\begin{lemma}\label{lem:canonicalization}
Let \(p:\X\to\Delta\) be a proper holomorphic submersion with K\"ahler
total space, and let \(B=\sum b_iD_i\) be an effective rational
relative SNC divisor with \(0\leq b_i<1\). After shrinking the disk,
there is a finite composition of blowups
\[
 \tau:\widetilde\X\longrightarrow\X
\]
along smooth relative boundary strata such that
\[
 K_{\widetilde\X/\Delta}+\widetilde B
     =\tau^*(K_{\X/\Delta}+B).
\]
Here $\widetilde B$ denotes the boundary determined by this crepant
equality; it is effective and relative SNC, with coefficients less
than one. Moreover, $p\circ\tau$ is smooth, $\widetilde\X$ is
K\"ahler, and every fiber pair $(\widetilde X_t,\widetilde B_t)$ is
canonical.
For every integer $m\ge1$ with $mB$ integral, $m\widetilde B$ is
also integral, and the pullback identifies the sections of
$m(K_{\X/\Delta}+B)$ and
$m(K_{\widetilde\X/\Delta}+\widetilde B)$, both on the total
spaces and on each fiber.
The geometric assertions hold also for real coefficients
$0\le b_i<1$; only the last assertion, concerning integral degrees,
requires rational coefficients.
\end{lemma}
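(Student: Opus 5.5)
We will follow the standard procedure for making an SNC klt pair canonical by blowing up strata (cf. \cite[Lemma~2.29 and Corollary~2.31(3)]{KM98}), and carry it out in the family. By \cite[Corollary~2.31(3)]{KM98}, an SNC pair $(X,B)$ with coefficients in $[0,1)$ is canonical if and only if $b_i+b_j\le1$ whenever $D_i\cap D_j\neq\emptyset$. (The criterion for higher codimension strata follows from this one, since the coefficients are less than one.)

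The algorithm runs as follows. We choose a pair of components $D_i,D_j$ that meet and have $b_i+b_j>1$. We blow up $\X$ along a connected component $Z$ of $D_i\cap D_j$. Since $B$ is relative SNC, $Z$ is a smooth stratum of codimension two in $\X$. It is smooth over the base, and in the local coordinates it is given by $z_1=z_2=0$. The blowup formula \cite[Lemma~2.29]{KM98} gives
\[
K_{\X'/\Delta}+B'=\tau_1^*(K_{\X/\Delta}+B),
\]
where $B'$ is the strict transform of $B$ plus $(b_i+b_j-1)E$. This coefficient lies in $(0,1)$, so $B'$ is effective with coefficients less than one. In the local charts the blowup is the product of the blowup of $\CC^2$ at the origin with the remaining coordinates. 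Therefore $\X'\to\Delta$ is again a submersion, $B'$ is again relative SNC, and fiberwise the blowup restricts to the blowup of $X_t$ along $Z_t$.

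Termination uses the usual invariant from \cite[proof of Corollary~2.31]{KM98}. Put $c_{ij}=b_i+b_j-1$ for the bad pairs. After the blowup, the new exceptional component $E$ meets only the strict transforms of components through $Z$. For such a component $D_i$ we have $b_E+b_i=2b_i+b_j-1$. The finite set of coefficient combinations $\sum n_kb_k-N$ that can occur is discrete and bounded. The standard measure decreases strictly, namely the number of bad pairs counted with the excess $c_{ij}$ in an appropriate lexicographic order. So only finitely many blowups occur. Shrinking the disk keeps the number of strata finite, since the relevant strata are finite in number over a compact closed subdisk.

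Next come the Kähler property and the fibers. $\widetilde\X$ is Kähler, because a blowup of a Kähler manifold along a smooth submanifold is Kähler after shrinking to a relatively compact base (take $\tau^*\omega-\delta\,c_1(E)$ with a suitable metric). Because the blowup centres are relative strata, the fiber of $\widetilde\X$ over $t$ is exactly the iterated blowup of $X_t$ along the corresponding strata. The crepant equality restricts to the fibers, and the criterion above holds on every fiber, so each $(\widetilde X_t,\widetilde B_t)$ is canonical.

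For the sections we argue as follows. Each new coefficient is an integer combination $\sum n_kb_k-N$, so $m\widetilde B$ is integral. Write $\widetilde B=\widetilde B^{+}$ and $\tau_*\widetilde B=B$. The pullback $\tau^*$ maps sections of $m(K+B)$ injectively to sections of $m(K+\widetilde B)$, since the two line bundles coincide. Conversely, any section on $\widetilde\X$ or $\widetilde X_t$ pushes forward to a section of $m(K+B)$ outside a codimension-two set. It extends by Hartogs, because $\X$ is smooth, and its pullback agrees with the original section. The same argument works fiberwise.

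The main obstacle is the termination bookkeeping: one has to choose the invariant so that it is uniform over the family. This is automatic because every step is a product in the local relative SNC charts, so the combinatorics on each fiber is the same as on the total space.
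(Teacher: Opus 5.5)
Your construction matches the paper's. You blow up connected components of bad codimension-two strata, you get crepancy and canonicity from \cite[Lemma~2.29 and Corollary~2.31(3)]{KM98}, and you build the K\"ahler form from a metric on $\mathcal O(-E)$. Your Hartogs argument for sections is equivalent to the paper's use of $\tau_*\mathcal O_{\widetilde\X}=\mathcal O_\X$ and the projection formula.

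The genuine difference is termination. The paper argues torically in a boundary chart. Every divisor produced is a ray $v\in\mathbf Z_{\ge0}^r$ with log discrepancy $A(v)=\sum_i v_i(1-b_i)<1$. Only finitely many such $v$ exist, since $v_i<1/(1-b_i)$, and each blowup adds a new ray. This bounds the number of steps a priori, for real coefficients too.

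Your decreasing-measure argument can be made to work, but as written it needs three repairs.
\begin{enumerate}
\item You must count bad codimension-two strata, that is, connected components of $D_k\cap D_l$, rather than bad pairs $(k,l)$. After you blow up one component $Z$ of $D_i\cap D_j$, the pair $(i,j)$ may remain bad, so a count of pairs at excess level $c_{ij}$ need not drop.
\item $E$ meets not only $D_i'$ and $D_j'$. It also meets the strict transform of every component $D_k$ that meets $Z$ transversally, and $E\cap D_k'$ can be bad: take $b_i=b_j=b_k=0.8$. What saves you is that every new stratum has excess $b_E+b_k-1=c_{ij}-(1-b_k)$. Moreover $b_E<\min(b_i,b_j)$, so all coefficients stay at most $\max_k b_k$. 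Hence each new excess is at most $c_{ij}-a$, where $a=1-\max_kb_k>0$.
\item Your claim that only finitely many coefficient values occur is clear for rational $b_i$, because the denominators are bounded. For real coefficients it is precisely the paper's formula $b_F=1-\sum_k v_k(1-b_k)$, which you have not derived. You can avoid it by using the uniform drop $a$, for instance by inducting on $\lceil c/a\rceil$ with a multiset ordering.
\end{enumerate}

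With these repairs your route is more elementary, since it needs no fans. The paper's toroidal count gives a cleaner a priori bound and handles real coefficients without extra work.
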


\begin{proof}
\emph{Step 1: Crepant blowups.}
As long as two boundary components \(D_i\) and \(D_j\) with
\(b_i+b_j>1\) meet, we blow up their intersection, one connected
component at a time if necessary. The center is smooth over the disk
and has codimension two in every fiber. We denote by $E$ the
exceptional divisor of this blowup and give it the coefficient
\[
 b_E=b_i+b_j-1\in(0,1).
\]
By the discrepancy formula for blowups
\cite[Lemma~2.29, pp.~52--53]{KM98}, this
choice is crepant. The new boundary is again relative SNC, and the
new total space is again smooth over the disk.

\medskip\noindent
\emph{Step 2: Termination.}
The new exceptional divisors may again meet components with large
coefficients, and we must show that the process terminates. In a
boundary chart of the original family, let $D_1,\ldots,D_r$ be the
boundary components meeting the chart, and put $a_i:=1-b_i>0$. The
toroidal divisor corresponding to a primitive vector
\(v=(v_1,\ldots,v_r)\in\mathbf Z_{\geq0}^r\) has log discrepancy
\[
 A(v)=\sum_i a_iv_i.
\]
Each blowup is a star subdivision which introduces a ray with
\(A(v)<1\). Since \(v_i<1/a_i\) for every \(i\), there are only
finitely many such rays in this chart. After shrinking around the
reference fiber, properness and the relative SNC condition give a
finite boundary cone complex. Hence only finitely many rays can occur
in the whole construction. Each blowup introduces a ray which is not
yet in the current regular fan, and subdivisions never remove rays.
Therefore the procedure terminates.

\medskip\noindent
\emph{Step 3: Canonicity.}
On the final model, any two boundary components that meet have
coefficient sum at most one, and every coefficient is less than one.
Hence each fiber pair is canonical by the discrepancy criterion for
SNC pairs \cite[Corollary~2.31(3), p.~53]{KM98}. In fact, this
criterion bounds the exceptional discrepancies from below by the
minimum of $1$, the numbers $1-b_i$, and the numbers $1-b_i-b_j$
for distinct components which meet. All these numbers are
nonnegative. The criterion is local in SNC coordinates, and therefore
it applies also in the analytic setting. For real coefficients we
approximate each coefficient from below by nonnegative rational
numbers. The pairwise inequalities are preserved, and the discrepancy
of each fixed exceptional divisor depends continuously on the
boundary coefficients. Passing to the limit, we obtain canonicity
also in the real case.

\medskip\noindent
\emph{Step 4: K\"ahlerness and sections.}
Each blowup is a projective morphism with smooth center. To obtain a
K\"ahler form on the blown-up space, we add to the pulled-back
K\"ahler form a small multiple of the curvature of a metric on
$\mathcal O(-E)$ which is positive along the fibers of the
exceptional divisor $E$. By compactness over a smaller closed disk,
this choice can be made uniformly, so that the total space remains
K\"ahler after shrinking. The discrepancy and termination arguments
use only $1-b_i>0$, and therefore the construction works also for
real coefficients.

The recursion \(b_E=b_i+b_j-1\) introduces no new denominators.
Thus \(m\widetilde B\) is integral if \(mB\) is, and the crepant
equality gives an isomorphism of the line bundles of degree \(m\).
Since \(\tau_*\mathcal O_{\widetilde\X}=\mathcal O_\X\), and similarly
for every \(\tau_t\), the projection formula identifies the spaces of
sections. In particular, a section extended on $\widetilde\X$
descends to an extension of the original section.
\end{proof}

Let $\beta$ be a smooth closed semipositive twisting form, and let
$\omega$ be a K\"ahler form on $\X$. For the proof of the extension
theorem we may replace the family by its canonicalization, $\beta$
by its pullback, and $\omega$ by a K\"ahler form on the new total
space; we keep the notation $p:\X\to\Delta$ and $B$. In what follows
every fiber pair $(X_t,B_t)$ is canonical, and $\beta$ remains fixed
while the perturbation parameter tends to zero.

\subsection{Removing the central fixed boundary}

Our boundary need not satisfy the condition
$N(a_{V,0})\wedge\Gamma_0=0$ of
Lemma~\ref{lem:relative-mmp-interface}. We therefore remove the
common part of the boundary and the divisorial negative part; by the
definition of the negative part, this common part is contained in
every positive current in the central class. A similar reduction by
fixed divisors is used in the projective proof of
\cite[Theorem~4.2]{HMX18}; here we use the divisorial Zariski
decomposition of real Bott--Chern classes.

Fix $\varepsilon>0$, consider the perturbed adjoint class
\[
 \alpha:=c_1(K_{\X/\Delta}+B)+[\beta]+\varepsilon[\omega],
\]
and assume that $\alpha_0$ is big. For brevity we suppress the
dependence of $\alpha$ and of the models below on $\varepsilon$.

The following relative reduction uses the description of minimal
multiplicities in \cite[Proposition~3.6]{BouZ} and the volume
identity in \cite[Proposition~3.20]{BouZ}. We give the argument for
the partial negative divisor and for its extension along the family.

\begin{lemma}\label{lem:fixed-boundary}
Put, coefficientwise on the central fiber,
\[
 C_0=B_0\wedge N(\alpha_0).
\]
Thus the coefficient of $D_{i,0}$ in $C_0$ is
$c_i:=\min\{b_i,\nu(\alpha_0,D_{i,0})\}$. We extend $C_0$ to the
divisor $C:=\sum_i c_iD_i$ on $\X$, and put
\[
 B'=B-C,\qquad
 \alpha':=\alpha-[C]
          =c_1(K_{\X/\Delta}+B')+[\beta]+\varepsilon[\omega].
\]
Then \(B'\) is an effective real relative SNC boundary,
every \((X_t,B'_t)\) is canonical, and
\[
 N(\alpha'_0)=N(\alpha_0)-C_0,\qquad
 N(\alpha'_0)\wedge B'_0=0,\qquad
 \vol(\alpha'_0)=\vol(\alpha_0)>0.
\]
Moreover, the minimal singularity types in \(\alpha_0\) and
\(\alpha'_0\) differ by the divisor \(C_0\).
\end{lemma}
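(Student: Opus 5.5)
The proof splits into the geometric assertions about \(B'\) and the statements about the central class \(\alpha'_0\).

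\emph{The boundary \(B'\).} Since \(0\le c_i\le b_i\), the divisor \(B'=\sum_i(b_i-c_i)D_i\) is effective. Its support is contained in that of \(B\), so it is relative SNC. Canonicity of every \((X_t,B'_t)\) follows from the criterion \cite[Corollary~2.31(3)]{KM98} recalled in the proof of Lemma~\ref{lem:canonicalization}. Decreasing coefficients preserves the inequalities \(b_i<1\), and \(b_i+b_j\le1\) for meeting components. Alternatively, discrepancies increase when the boundary decreases.

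\emph{The central class.} Write \(\xi=\alpha_0\), which is big, and \(\xi'=\alpha'_0=\xi-[C_0]\). The key input is Boucksom's characterization \cite[Proposition~3.6]{BouZ}: for a big class, \(\nu(\xi,D)\) equals the generic Lelong number along \(D\) of a current \(T_{\min}\) with minimal singularities in \(\xi\). Since \(C_0\le N(\xi)\) coefficientwise, Siu decomposition gives \(T_{\min}-[C_0]\ge0\), and this current lies in \(\xi'\). Conversely, if \(S\ge0\) is in \(\xi'\), then \(S+[C_0]\) is in \(\xi\).

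I will show that \(T_{\min}-[C_0]\) has minimal singularities in \(\xi'\). Let \(S\) be any positive current in \(\xi'\). Then \(S+[C_0]\) is less singular than nothing beyond \(T_{\min}\), that is, \(T_{\min}\) is at most as singular as \(S+[C_0]\). One has to pass from this to \(T_{\min}-[C_0]\) being at most as singular as \(S\). On potentials this amounts to subtracting \(\sum_i c_i\log|\sigma_i|^2\) on both sides of an inequality of the form \(\varphi_{\min}\ge\varphi_S+\sum_ic_i\log|\sigma_i|^2-C\). This subtraction is legitimate away from \(C_0\). The inequality then extends across \(C_0\), since quasi-psh functions bounded above near a divisor extend, and the inequality holds off a pluripolar set. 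This gives the last assertion, that the minimal singularity types differ by \(C_0\).

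Reading off Lelong numbers then yields \(\nu(\xi',D)=\nu(\xi,D)-\operatorname{coeff}_D C_0\) for every prime \(D\), that is, \(N(\xi')=N(\xi)-C_0\). The disjointness follows coefficientwise: on \(D_{i,0}\) we get \(\min\{b_i-c_i,\nu_i-c_i\}=\min\{b_i,\nu_i\}-c_i=0\). Here \(\nu_i=\nu(\alpha_0,D_{i,0})\), and \(B'_0\) has no other components.

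\emph{Volume.} The absolutely continuous parts of \(T\) and \(T-[C_0]\) coincide, since \([C_0]\) is singular. Hence the map \(S\mapsto S+[C_0]\) shows \(\vol(\xi')\le\vol(\xi)\). Conversely, \cite[Proposition~3.20]{BouZ} states \(\vol(\xi)=\vol(\xi-[N(\xi)])\). Monotonicity then gives \(\vol(\xi)=\vol(\xi-[N])\le\vol(\xi')\le\vol(\xi)\), because \(\xi'-(\xi-[N])=[N-C_0]\) is effective. In particular \(\xi'\) is big, which justifies using minimal-singularity currents in \(\xi'\).

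The main obstacle is the comparison of minimal singularities in the second paragraph. Its subtle point is that subtracting a divisor preserves the "less singular" relation, which needs the extension argument across \(C_0\). A cleaner route is to use the fact that every positive current in \(\xi\) has Lelong number \(\ge\nu(\xi,D)\ge c_i\) along \(D_{i,0}\). This gives the bijection \(S\leftrightarrow S+[C_0]\) between positive currents in \(\xi'\) and positive currents in \(\xi\), and this bijection preserves the singularity order.
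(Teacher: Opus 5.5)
Your proof is correct and is essentially the paper's argument: Boucksom's Proposition~3.6 and the Siu decomposition to subtract $[C_0]$, equality of absolutely continuous parts for the volume, and generic Lelong numbers of minimal-singularity currents in the big class $\alpha'_0$. Your main route works only with $T_{\min}$ and closes the volume comparison with \cite[Proposition~3.20]{BouZ} and monotonicity, whereas the paper uses the bijection between all positive currents in the two classes, which is exactly your ``cleaner route''.

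The one point you leave implicit, and which the paper checks, is that each $D_{i,0}$ is irreducible. The proper submersion $D_i\to\Delta$ from the connected manifold $D_i$ has connected, hence irreducible, fibers over the disk, and this is what gives $C|_{X_0}=C_0$ and therefore $\alpha'_0=\alpha_0-[C_0]$.
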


\begin{proof}
By \cite[Proposition~3.6]{BouZ}, the generic Lelong numbers of every
positive current \(T\in\alpha_0\) are not less than the corresponding
coefficients of \(N(\alpha_0)\). Hence the Siu decomposition of $T$
gives \(T-[C_0]\geq0\). Consequently, subtracting and adding
\([C_0]\) give mutually inverse bijections between the positive
currents in \(\alpha_0\) and those in \(\alpha'_0\). The minimal
singularity types in these classes differ by \([C_0]\). Since a
divisorial current has zero absolutely continuous part, corresponding
currents satisfy $(T-[C_0])_{\mathrm{ac}}=T_{\mathrm{ac}}$. Taking
the supremum of the integrals of their top exterior powers in the
definition of volume gives $\vol(\alpha'_0)=\vol(\alpha_0)$. This
is the version for a partial divisor of the argument in
\cite[proof of Proposition~3.20]{BouZ}. In particular, $\alpha'_0$ is
big. Applying \cite[Proposition~3.6]{BouZ} to currents with minimal
singularities in both big classes, we obtain
$N(\alpha'_0)=N(\alpha_0)-C_0$.

Along each boundary component we have removed the smaller of its
boundary coefficient and its coefficient in \(N(\alpha_0)\). Hence
at least one of the two remaining coefficients is zero, and this
proves \(N(\alpha'_0)\wedge B'_0=0\). Since canonicity is preserved
when an effective boundary is lowered, every \((X_t,B'_t)\) is
canonical.

Finally, the extension of $C_0$ is well defined. In fact, each prime
boundary component $D_i\to\Delta$ is proper and smooth, and, by Stein
factorization over the simply connected disk, its fibers are
connected and hence irreducible. Thus each $D_{i,0}$ is a prime
divisor, $C|_{X_0}=C_0$, and $0\le C\le B$, since $0\le c_i\le b_i$.
Since the coefficients $c_i$ may be real, we use for the intermediate
boundary $B'$ the results on generalized pairs with real
coefficients. The original rational boundary $B$ is restored before
sections are extended.
\end{proof}

\subsection{The central model and exceptionality of the removed divisor}

The reduced boundary satisfies the hypotheses of
Lemma~\ref{lem:relative-mmp-interface}. Before extending the MMP
over the base, we construct its central model and prove that the
resulting map contracts every component of $C_0$.

We run the MMP with scaling for the generalized pair
$(X_0,B'_0+\beta_0+\varepsilon\omega_0)$. The required hypotheses are
satisfied: \(\alpha'_0\) is big, the pair is generalized klt, and the
nef part $\beta_0+\varepsilon\omega_0$ is a smooth K\"ahler form.
In particular, the boundary plus the nef part is big. Moreover,
$X_0$ is smooth and hence strongly $\Q$-factorial, as required in the
formulation with scaling. A sufficiently large multiple of a K\"ahler
class gives a scaling class whose addition makes the initial adjoint
class nef. Therefore, by the existence and termination results
\cite[Theorems~1.1 and~1.5]{HX}, we obtain a finite MMP
\[
 X_0\dashrightarrow X'_0
\]
to a good model $X'_0$. We denote by $B'_{X'_0}$ the transformed
boundary and by $\mathfrak b_{X'_0}$ the trace of the nef part, and
put
\[
 a_{X'_0}:=c_1(K_{X'_0}+B'_{X'_0})+\mathfrak b_{X'_0}.
\]
The class $a_{X'_0}$ is nef. It is also big. In fact, on a common
resolution the pullback of $\alpha'_0$ is the sum of the pullback of
$a_{X'_0}$ and an effective exceptional divisor; hence $a_{X'_0}$ is
the pushforward of a big class. The transformed boundary plus the nef
part is modified big in the sense of \cite[Definition~2.8]{HX}. To
see this, choose a common smooth K\"ahler resolution
$X_0\xleftarrow{\rho_0}\widehat W_0\xrightarrow{\sigma_0}X'_0$,
and let $\widehat B_0\ge0$ be the strict transform of $B'_{X'_0}$.
Then
\[
 [B'_{X'_0}]+\mathfrak b_{X'_0}
 =(\sigma_0)_*\bigl(\rho_0^*([\beta_0]+\varepsilon[\omega_0])
                         +[\widehat B_0]\bigr).
\]
The class inside the pushforward is big, since it is the pullback
of a K\"ahler class plus an effective divisor.

By the transcendental base-point-free theorem
\cite[Theorem~1.4]{HX}, there exist a contraction
\(h_0:X'_0\to Y_0\) onto a normal compact K\"ahler space and a
K\"ahler class \(\gamma_0\) on $Y_0$ such that
$a_{X'_0}=h_0^*\gamma_0$. Since $a_{X'_0}$ is big, we have
$\dim Y_0=n$; since, moreover, the fibers of $h_0$ are connected,
$h_0$ is bimeromorphic. On a common smooth K\"ahler resolution
\[
 X_0\xleftarrow{\mu_0}W_0\xrightarrow{f_0}Y_0
\]
we define the class $\mathfrak p_0:=f_0^*\gamma_0$. The MMP gives
an effective $f_0$-exceptional divisor $E'_0$ such that
\begin{equation}\label{eq:central-reduced-model}
 \mu_0^*\alpha'_0=\mathfrak p_0+[E'_0],
 \qquad \mathfrak p_0=f_0^*\gamma_0,
 \qquad E'_0\geq0,
 \qquad E'_0\text{ is }f_0\text{-exceptional}.
\end{equation}
Here $\mathfrak p_0$ and $\gamma_0$ are classes, and the integrals of
their powers are intersection numbers. The resolution $W_0$ is chosen
on the central fiber, and it need not be the central fiber of the
relative resolution constructed below. The composite
$X_0\dashrightarrow Y_0$ is a bimeromorphic contraction. The spaces
$X'_0$ and $Y_0$ have rational singularities by the results on
generalized klt pairs and crepant contractions in
\cite[Lemmas~2.25 and~2.31]{HX} (see also
\cite[Proposition~4.12(1),(2)]{LPWZ}), which admit a nonzero
boundary.

We now show that this model contracts the removed boundary. For the
proof we combine the perturbation argument in
\cite[proof of Theorem~3.18]{BouZ} with the description of the
non-K\"ahler locus of a nef and big class in
\cite[Theorem~1.1]{CT}. We recall the relevant notions. A closed
$(1,1)$-current is said to have \emph{analytic singularities} if its
local potentials are of the form $c\log\sum_j|g_j|^2+v$, where $c>0$
is a constant, the $g_j$ are finitely many holomorphic functions and
$v$ is smooth. The \emph{non-K\"ahler locus} of a big class on a
compact K\"ahler manifold is the intersection of the sets where $T$ is
not smooth, $T$ ranging over all K\"ahler currents in the class with
analytic singularities \cite{BouZ,CT}. Let $\mathfrak p$ be a nef
class. A positive-dimensional irreducible analytic subset $V$ is said
to be \emph{null} for $\mathfrak p$ if $\int_V\mathfrak p^{\dim V}=0$,
and the \emph{null locus} of $\mathfrak p$ is the union of its null
subsets.

\begin{lemma}\label{lem:central-exceptionality}
Every component of \(\mu_0^*C_0\) is \(f_0\)-exceptional.
\end{lemma}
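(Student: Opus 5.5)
We argue by contradiction. Let $G$ be a component of $\mu_0^*C_0$; it is either the strict transform $\widetilde D$ of some $D_{i,0}$ with $c_i>0$, or a $\mu_0$-exceptional divisor lying over $D_{i,0}$. The $\mu_0$-exceptional components lying over a component of $C_0$ need a separate argument, given at the end. The main case is $G=\widetilde D$, the strict transform of a component $D=D_{i,0}$ with $c_i=\min\{b_i,\nu(\alpha_0,D)\}>0$. Suppose that $\widetilde D$ is not $f_0$-exceptional. Then $f_0(\widetilde D)$ is a prime divisor on $Y_0$. Since $\gamma_0$ is K\"ahler, we get
\[
\int_{\widetilde D}\mathfrak p_0^{\,n-1}=\int_{f_0(\widetilde D)}\gamma_0^{n-1}>0,
\]
so $\widetilde D$ is not in the null locus of the nef and big class $\mathfrak p_0$. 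By \cite[Theorem~1.1]{CT}, the non-K\"ahler locus of $\mathfrak p_0$ equals its null locus. Hence $\widetilde D$ is not contained in the non-K\"ahler locus.

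Concretely, there is a K\"ahler current $T\in\mathfrak p_0$ with analytic singularities that is smooth at the generic point of $\widetilde D$. In particular, its generic Lelong number along $\widetilde D$ vanishes. We then follow \cite[proof of Theorem~3.18]{BouZ}. By \eqref{eq:central-reduced-model}, the current $T+[E'_0]$ is a positive current in $\mu_0^*\alpha'_0$. Since $E'_0$ is $f_0$-exceptional and $\widetilde D$ is not, $\widetilde D$ is not a component of $E'_0$. So $\nu(T+[E'_0],\widetilde D)=0$, and therefore
\[
\nu(\mu_0^*\alpha'_0,\widetilde D)=0 .
\]
Minimal multiplicities along non-exceptional divisors are unchanged under pullback by a modification: push the current forward by $\mu_0$ and pull currents with minimal singularities back. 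This gives $\nu(\alpha'_0,D)=0$. Lemma~\ref{lem:fixed-boundary} gives $N(\alpha'_0)=N(\alpha_0)-C_0$, so $\nu(\alpha_0,D)=c_i$. Since $c_i>0$, this forces $c_i=\nu(\alpha_0,D)\le b_i$.

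It remains to derive a contradiction from this, which I expect to be the main obstacle. By itself it is consistent, because removing $C_0$ exactly cancels the negative part along $D$. We need to use that $D$ is still a component of $B'_0$ unless $c_i=b_i$, together with the MMP structure. If $c_i<b_i$, then $D\le B'_0$ appears with positive coefficient while $\nu(\alpha'_0,D)=0$; this is allowed by $N(\alpha'_0)\wedge B'_0=0$, so there is no contradiction yet. The decisive input should be the $\alpha_0$ version of the same perturbation argument. Note that
\[
\mu_0^*\alpha_0=\mathfrak p_0+[E'_0]+\mu_0^*[C_0].
\]
To obtain a contradiction we need the reverse inequality: currents of $\mathfrak p_0$ that are smooth generically along $\widetilde D$ should produce currents in $\alpha_0$ whose Lelong number along $D$ is strictly smaller than $c_i$. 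For this I would twist by the effective divisor $\widetilde D$. Since $\mathfrak p_0$ is big and $\widetilde D$ is outside its non-K\"ahler locus, a small multiple $\mathfrak p_0-\delta[\widetilde D]$ is still represented by a positive current for $\delta>0$ small. This should follow from \cite[Theorem~3.18]{BouZ}, by the openness of the condition ``not a component of the non-K\"ahler locus'' under small divisorial perturbations. Adding back $[E'_0]+\mu_0^*[C_0]$ then gives a positive current in $\mu_0^*\alpha_0$ with Lelong number at most $c_i-\delta$ along $\widetilde D$. This contradicts $\nu(\alpha_0,D)=c_i$. Making the $\delta$-perturbation rigorous is where I expect the real work to lie; in particular we must check that it does not create new negativity elsewhere.

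Finally, consider a $\mu_0$-exceptional component $G$ of $\mu_0^*C_0$. Its image in $X_0$ has codimension at least two. It is $f_0$-exceptional unless $f_0$ maps it onto a divisor of $Y_0$, which would then be a divisor extracted by the bimeromorphic contraction $X_0\dashrightarrow Y_0$. Such extraction is impossible, because the inverse of a bimeromorphic contraction contracts no divisors.
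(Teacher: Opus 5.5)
Your setup is right. The Collins--Tosatti step, which gives a K\"ahler current $S\in\mathfrak p_0$ with analytic singularities that is smooth at a general point of $\widetilde D$, is the correct input. Your treatment of the $\mu_0$-exceptional components is also correct: they are $f_0$-exceptional because $X_0\dashrightarrow Y_0$ is a bimeromorphic contraction.

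\textbf{The gap.} The decisive step is missing, and as you state it, it has the wrong sign. A positive current $R\in\mathfrak p_0-\delta[\widetilde D]$ exists for trivial reasons, since the big cone is open. But then $R+[E'_0]+[\mu_0^*C_0]$ lies in $\mu_0^*\alpha_0-\delta[\widetilde D]$. To return to $\mu_0^*\alpha_0$ you must add $\delta[\widetilde D]$, which raises the Lelong number along $\widetilde D$ to at least $c_i+\delta$. What you need is a positive current in $\mathfrak p_0+\delta[\widetilde D]$ with zero generic Lelong number along $\widetilde D$.

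\textbf{The fix.} Such a current comes directly from the strict positivity of $S$. Fix a K\"ahler form $\omega_{W_0}$ and a smooth closed representative $\theta$ of $[\widetilde D]$ (the paper uses one of the whole class $[\mu_0^*C_0]$). Choose constants $a>0$ and $L\ge0$ with $S\ge a\,\omega_{W_0}$ and $\theta\ge-L\,\omega_{W_0}$. For $0<\delta\le c_i$ and $\delta L<a$, the current $S+\delta\theta$ is positive and still smooth at a general point of $\widetilde D$. Hence
\[
 T=S+\delta\theta+[E'_0]+\bigl([\mu_0^*C_0]-\delta[\widetilde D]\bigr)
\]
is a positive current in $\mu_0^*\alpha_0=\mathfrak p_0+[E'_0]+[\mu_0^*C_0]$. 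Its generic Lelong number along $\widetilde D$ is $c_i-\delta$, because $\widetilde D$ is not a component of $E'_0$. Pushing forward by $\mu_0$, which is biholomorphic near a general point of $\widetilde D$, gives a positive current in $\alpha_0$ with Lelong number $c_i-\delta<c_i\le\nu(\alpha_0,D)$ along $D$. This is a contradiction. No openness statement for the non-K\"ahler locus is needed. No new negativity can appear elsewhere, since $S+\delta\theta\ge0$ globally.

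\textbf{A minor point.} Your detour through $\alpha'_0$ to obtain $\nu(\alpha_0,D)=c_i$ is harmless but unnecessary. The contradiction only uses $\nu(\alpha_0,D)\ge c_i$, which is built into the definition $C_0=B_0\wedge N(\alpha_0)$.
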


\begin{proof}
Put \(C_W=\mu_0^*C_0\), and suppose that a prime component \(F\) of
\(C_W\) with coefficient \(c_F>0\) is not \(f_0\)-exceptional. Then
\[
 \int_F \mathfrak p_0^{n-1}
       =\int_{(f_0)_*F}\gamma_0^{n-1}>0,
\]
so that $F$ is not a null divisor for the nef and big class
\(\mathfrak p_0=f_0^*\gamma_0\). By Collins--Tosatti
\cite[Theorem~1.1]{CT}, the non-K\"ahler locus of $\mathfrak p_0$
coincides with its null locus. A prime divisor cannot be contained in
a larger proper irreducible analytic subset of $W_0$, and $W_0$
itself is not null, since $\mathfrak p_0$ is big. Hence $F$ is not
contained in the non-K\"ahler locus. Therefore we can choose a
K\"ahler current \(S\in\mathfrak p_0\) with analytic singularities
which is smooth at a general point of $F$.

Using the strict positivity of $S$, we replace a small part of
$[C_W]$ by a smooth representative. Fix a K\"ahler form
$\omega_{W_0}$ on $W_0$ and a smooth closed representative
$\theta_C$ of $[C_W]$. We choose constants $a_S>0$ and $L_C\ge0$ such
that $S\ge a_S\omega_{W_0}$ and $\theta_C\ge-L_C\omega_{W_0}$. For
$0<\lambda<1$ with $\lambda L_C<a_S$, the current
$S+\lambda\theta_C$ is positive; hence
\[
 T=S+\lambda\theta_C+[E'_0]+(1-\lambda)[C_W]
\]
is a positive current in \(\mu_0^*\alpha_0=\mathfrak p_0+[E'_0]+[C_W]\).

Since \(X_0\dashrightarrow Y_0\) is a bimeromorphic contraction, every
\(\mu_0\)-exceptional divisor is \(f_0\)-exceptional. In particular,
\(F\) is not \(\mu_0\)-exceptional, and hence it is the strict
transform of a component \(C_i\) of \(C_0\). Moreover, $F$ is not a
component of \(E'_0\), and the generic Lelong number of \(S\) along
$F$ is zero. Therefore $(\mu_0)_*T$ is a positive current in
\(\alpha_0\) whose generic Lelong number along \(C_i\) is
\((1-\lambda)c_F<c_F\). This contradicts
\[
 c_F=\operatorname{coeff}_{C_i}(C_0)
      \leq\operatorname{coeff}_{C_i}N(\alpha_0),
\]
since the right-hand side is a lower bound for the generic Lelong
number along $C_i$ of every positive current in \(\alpha_0\).
\end{proof}

\subsection{Extension and restoration of the relative model}

We now extend the central model by
Lemma~\ref{lem:relative-mmp-interface}, and pass from the
exceptionality on the central fiber in
Lemma~\ref{lem:central-exceptionality} to exceptionality on the total
space. Restoring the removed boundary, we shall obtain the desired
decomposition of the original class $\alpha$. For zero boundary this
is the construction of \cite[Theorem~4.19]{LPWZ}. In the proof below
we describe the additional reduction and restoration needed for a
nonzero boundary, together with their consequences for the volume.

\begin{proposition}
\label{prop:local-log-model}
After shrinking the disk about \(0\), there exist a proper flat
family \(g:Y\to\Delta\) with normal total space and normal fibers, a
K\"ahler form \(\gamma\) on \(Y\), and a common resolution
\[
 \X\xleftarrow{\mu} Z\xrightarrow{f}Y
\]
with \(Z\) K\"ahler, such that
\begin{equation}\label{eq:restored-model}
 \mu^*\alpha=[P]+[E_\alpha],
 \qquad P=f^*\gamma,\qquad
 E_\alpha\geq0,\qquad
 E_\alpha\text{ is }f\text{-exceptional}.
\end{equation}
Put $q:=p\circ\mu=g\circ f:Z\to\Delta$. Outside a closed discrete
set of parameters, $q$ is a smooth proper submersion, the induced
maps $\mu_t:Z_t\to X_t$ and $f_t:Z_t\to Y_t$ are modifications, the
relevant divisor supports have relative simple normal crossings, and
each component of $E_{\alpha,t}$ is $f_t$-exceptional. Moreover,
\(\alpha_t\) is big on the whole smaller disk, and its volume is
constant there.
\end{proposition}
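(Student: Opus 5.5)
The plan is to extend the central construction of the preceding subsection over the base, and then to restore the removed divisor $C$. We start from Lemma~\ref{lem:fixed-boundary}, applied to the reduced boundary $B'$ and the class $\alpha'=c_1(K_{\X/\Delta}+B')+[\beta]+\varepsilon[\omega]$. Here $(X_0,B'_0)$ is canonical, $\lfloor B'_0\rfloor=0$, $N(\alpha'_0)\wedge B'_0=0$ and $\alpha'_0$ is big. We view $\beta+\varepsilon\omega$ as the nef part; it is a K\"ahler form on $\X$ after shrinking, so Lemma~\ref{lem:relative-mmp-interface} applies with $\Gamma=B'$. It extends the finite central MMP $X_0\dashrightarrow X'_0$ to a negative bimeromorphic contraction $\X\dashrightarrow X'$ over a smaller disk. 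On a common resolution we get $\mu_1^*\alpha'=r^*a_{X'}+[F_1]$ with $F_1\ge0$ and $F_1$ $r$-exceptional, and $X'$ has normal fibers with central fiber $X'_0$. Since $a_{X'}|_{X'_0}=a_{X'_0}=h_0^*\gamma_0$ and $X'_0,Y_0$ have rational singularities, the form of \cite[Lemma~4.18]{LPWZ} recalled above gives, after shrinking, $h:X'\to Y$ extending $h_0$ and a K\"ahler form $\gamma$ on $Y$ with $a_{X'}=h^*[\gamma]$. Here $Y\to\Delta$ is proper flat and normal with normal fibers. We then take $Z$ to be a K\"ahler resolution of the graph of $\X\dashrightarrow Y$, obtained by blowups that are projective over $\X$, so that $Z$ stays K\"ahler. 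Writing $f=h\circ r$ (after passing to a further resolution), we obtain
\[
 \mu^*\alpha'=f^*[\gamma]+[E'],\qquad E'\ge0,
\]
where $E'$ is the pullback of $F_1$ together with the $h$-exceptional contributions. Each of these pieces is $f$-exceptional, since $h$ is bimeromorphic and $r$ contracts $F_1$.

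To restore the boundary we set $E_\alpha=E'+\mu^*C$, so that $\mu^*\alpha=\mu^*\alpha'+[\mu^*C]$ gives \eqref{eq:restored-model}. It remains to show that each component of $\mu^*C$ is $f$-exceptional. A component of $\mu^*C$ is either $\mu$-exceptional or the strict transform $\widetilde D_i$ of some $D_i$ with $c_i>0$. In the first case it is $f$-exceptional because $\X\dashrightarrow Y$ is a bimeromorphic contraction, which is inherited from the central statement. For $\widetilde D_i$ we argue on the fibers. The divisor $D_i$ is flat over $\Delta$ with irreducible fibers, as in the proof of Lemma~\ref{lem:fixed-boundary}. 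The image $f(\widetilde D_i)$ is an irreducible analytic subset of $Y$ that dominates $\Delta$ or lies in a fiber. If it had codimension one, then its central fiber would have codimension one in $Y_0$ and would contain $f_0$ of the strict transform of $D_{i,0}$. This contradicts Lemma~\ref{lem:central-exceptionality}, which says that $D_{i,0}$ is contracted by $X_0\dashrightarrow Y_0$. To make this rigorous we compare the central fiber of the relative model with the central model via the extension properties (1)--(2) of Lemma~\ref{lem:relative-mmp-interface}: the central restriction of $X'\to Y$ is $h_0$, and by upper semicontinuity of fiber dimension $\dim f(\widetilde D_i)\cap Y_0\le n-2$. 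I expect this comparison between total-space and fiberwise exceptionality to be the main obstacle. The reason is that the central fiber $Z_0$ need not be the resolution $W_0$, and one must control components of $Z_0$ lying over $X_0$.

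For the generic statements, $q:Z\to\Delta$ is a proper map from a manifold to a disk, so its critical values form a closed discrete set. Outside this set and the discrete sets where the finitely many relevant divisors fail to be relative SNC or fail to restrict to exceptional divisors (generic smoothness and flatness), the restrictions $\mu_t$ and $f_t$ are modifications and each component of $E_{\alpha,t}$ is $f_t$-exceptional, since the image has codimension at least two in $Y$ and $Y_t$ is a fiber of a flat family. Finally, for such $t$ we use Boucksom's invariance of volume under modifications and under adding an exceptional effective divisor to a nef class to get $\vol(\alpha_t)=\vol(\mu_t^*\alpha_t)=\int_{Z_t}(f_t^*\gamma_t)^n=\int_{Y_t}\gamma_t^n$. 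This is a fiber integral of the closed form $\gamma^n$ on the flat family $Y$, hence constant and equal to $\vol(\alpha'_0)=\vol(\alpha_0)>0$. In particular $\alpha_t$ is big at these parameters. At the finitely many excluded parameters in a smaller closed disk, bigness follows because $\alpha_t$ still has the positive current $\mu_*(f^*\gamma+[E_\alpha])|_{X_t}$, obtained by restricting a K\"ahler-current representative. Constancy of the volume there follows from continuity of the volume on the big cone together with upper semicontinuity of the volume in families, applied to the pseudoeffective currents obtained by restriction.
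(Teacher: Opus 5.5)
Your first step (extending the central MMP by Lemma~\ref{lem:relative-mmp-interface}, applying \cite[Lemma~4.18]{LPWZ}, and resolving) matches the paper, and so does the volume computation at good parameters. The gap is in the restoration step, exactly where you expected it: you do not prove that the strict transform $\widetilde D_i$ of a component of $C$ is $f$-exceptional on the total space. As written, your sketch gives no contradiction. If $G=f(\widetilde D_i)$ were a horizontal divisor, then $G\cap Y_0$ would have pure dimension $n-1$. That it contains the small set $f_0(\text{strict transform of }D_{i,0})$ contradicts nothing. What you need is that $G\cap Y_0=f(\widetilde D_i\cap Z_0)$ equals, as a set, only that small image. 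The difficulty is that $\widetilde D_i\cap Z_0$, and likewise the central fiber of the transform of $D_i$ on $X'$, may a priori have further $(n-1)$-dimensional components lying over proper subsets of $D_{i,0}$. Their images in $Y_0$ could be divisors.

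Upper semicontinuity of fiber dimension cannot exclude this, because it goes the wrong way. It lets fiber dimension jump up only on closed sets, so it propagates finiteness from the central fiber to nearby points. It does not propagate contractedness from $D_{i,0}$ to the general point of $\widetilde D_i$. Moreover, an irreducible horizontal image has fibers of the same pure dimension everywhere, so everything depends on identifying the central fiber set-theoretically.

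The paper closes this gap by working on the intermediate normal families $V^{(j)}$ instead of on $Z$, using the contraction diagrams of Lemma~\ref{lem:relative-mmp-interface}(2). It shows inductively that the central fiber of the transform of $D_i$ is, as a set, the central transform of $D_{i,0}$, unless $D_i$ has already been contracted.
\begin{itemize}
\item For a divisorial contraction $\varpi$ one has $\varpi(D_i)\cap W_0=\varpi(D_i\cap V_0)$. So if the central transform is contracted, $\varpi(D_i)$ cannot be a horizontal divisor, since such a divisor would meet $W_0$ in dimension $n-1$.
\item For a flip, the relative contraction is an isomorphism wherever the central one is. It is finite there by upper semicontinuity, now used in the correct direction, and it is bimeromorphic onto a normal space. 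The central flipping locus has dimension at most $n-2$, while every component of $D_i^+\cap V_0^+$ has dimension $n-1$, so no extra components appear.
\end{itemize}
On $X'$ this gives that $h(D_i')\cap Y_0$ is the $h_0$-image of the central transform, which has dimension at most $n-2$ by Lemma~\ref{lem:central-exceptionality}. Hence $D_i'$ is $h$-exceptional. Relatedly, the fact that $\X\dashrightarrow Y$ is a bimeromorphic contraction, which you need for the $\mu$-exceptional components, comes from Lemma~\ref{lem:relative-mmp-interface}(3) together with $h$ being a morphism, not from the central statement.

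There is also a smaller problem at excluded parameters other than $0$. Upper semicontinuity of the volume gives only one inequality. In addition, $\mu_*(f^*\gamma+[E_\alpha])$ is not a K\"ahler current in general, and it is not obviously restrictable to $X_t$. Simply shrink the disk so that $0$ is the only excluded parameter. There $\vol(\alpha_0)=\vol(\alpha_0')=\int_{Y_0}\gamma_0^n$ by \eqref{eq:central-reduced-model} and Lemma~\ref{lem:fixed-boundary}.
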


\begin{proof}
\emph{Step 1: The reduced model.}
By Lemma~\ref{lem:fixed-boundary}, the central reduced pair is
canonical, and its boundary has zero integral part and no common
component with the divisorial negative part. The nef part
$\beta+\varepsilon\omega$ is a smooth K\"ahler form on the total
space. Hence, by Lemma~\ref{lem:relative-mmp-interface}, the chosen
central MMP extends together with its contraction diagrams. Let
\(X'\to\Delta\) be the resulting normal flat family, and let
\(a_{X'}\) be its real Bott--Chern adjoint class. By relative
adjunction we have
\[
 a_{X'}|_{X'_0}=h_0^*\gamma_0.
\]
Since \(X'_0\) and \(Y_0\) have rational singularities, we may apply
\cite[Lemma~4.18]{LPWZ}, which admits a nonzero boundary. After
shrinking, \(h_0\) extends to a proper bimeromorphic morphism
\(h:X'\to Y\), and
\[
 a_{X'}=h^*[\gamma],\qquad [\gamma]|_{Y_0}=\gamma_0
\]
for a K\"ahler form \(\gamma\) on \(Y\).

We resolve the graph of \(\X\dashrightarrow X'\) by a sequence of
blowups $\mu:Z\to\X$, and denote by $r:Z\to X'$ the induced morphism.
Put $f:=h\circ r$ and $P:=f^*\gamma$. The form $P$ is smooth, closed
and semipositive, and $Z$ is K\"ahler after shrinking the base. By
Lemma~\ref{lem:relative-mmp-interface}(3) we obtain an effective
$r$-exceptional divisor $E'$. It is also $f$-exceptional, since
composition with $h$ cannot increase the dimension of its image.
Hence
\[
 \mu^*\alpha'=[P]+[E'],\qquad E'\geq0,
 \qquad E'\text{ is }f\text{-exceptional}.
\]

\medskip\noindent
\emph{Step 2: Restoring the removed boundary.}
We claim that $\mu^*C$ is also $f$-exceptional. We call a divisor
horizontal if it dominates the base disk, and vertical if it is
contained in a fiber. Let $D_i$ be a component of $C$, that is,
$c_i>0$. Since $D_i$ is smooth over the base, it is horizontal. We
follow its strict transforms through the relative MMP, and denote
them again by $D_i$ until $D_i$ is contracted; at the beginning, the
central fiber of $D_i$ is the component $D_{i,0}$ of $C_0$.

First consider a contraction morphism $\varpi:V\to W$ extending a
central divisorial step, with $D_i\subset V$, and let $V_0$ and $W_0$
be the central fibers. Since $\varpi$ is a proper morphism over the
base, we have
\[
 \varpi(D_i)\cap W_0=\varpi(D_i\cap V_0)
\]
as sets. If the central transform of $D_i$ is contracted, this set
has dimension at most $n-2$. If $\varpi(D_i)$ were a horizontal
divisor, the base coordinate would cut it in a set of dimension at
least $n-1$, which is nonempty by properness. This is impossible, and
hence $D_i$ is contracted also on the total space. If the central
transform of $D_i$ is not contracted, the same equality identifies
the central fiber of $\varpi(D_i)$ with the image of that transform.

Next consider a relative flip $V\to T\leftarrow V^+$, and let $D_i^+$
be the strict transform of $D_i$. Near a point where a central
contraction is an isomorphism, the relative contraction has finite
fibers by properness and the upper semicontinuity of fiber
dimension; since it is finite and bimeromorphic onto a normal space,
it is an isomorphism there. Consequently, $D_i^+\cap V^+_0$ coincides
with the central strict transform outside the exceptional locus of
the small central contraction $V^+_0\to T_0$. This exceptional locus
contains no further components. In fact, since $D_i^+$ is horizontal,
the base coordinate cuts a nonzero principal divisor on it; hence
every irreducible component of $D_i^+\cap V^+_0$ has dimension $n-1$,
whereas the exceptional locus has dimension at most $n-2$. Thus the
support of $D_i^+\cap V^+_0$ is the central strict transform. Since
only supports are used here, the intermediate divisors need not be
smooth over the base.

Following $D_i$ through the finite sequence, we see that either it is
contracted during the MMP or the central fiber of its transform on
$X'$ is the transform of $D_{i,0}$. In the latter case, $h_0$
contracts this central transform by
Lemma~\ref{lem:central-exceptionality}, and the same dimension
argument, applied to $h$, shows that the transform of $D_i$ is
$h$-exceptional. Thus $\X\dashrightarrow Y$ contracts every component
of $C$.

Moreover, every $\mu$-exceptional divisor is $f$-exceptional, since
$\X\dashrightarrow Y$ is a bimeromorphic contraction; otherwise its
image would be a divisor on $Y$ contracted by the inverse map
$Y\dashrightarrow\X$. Since every component of $\mu^*C$ is either the
strict transform of a component of $C$ or a $\mu$-exceptional
divisor, $\mu^*C$ is $f$-exceptional.

\medskip\noindent
\emph{Step 3: Volumes.}
Put \(E_\alpha=E'+\mu^*C\). Then \eqref{eq:restored-model} follows
from the preceding steps. We take a further log resolution which also
includes the supports of $\mu^*B$, $E_\alpha$ and the relative
discrepancy divisor. By generic smoothness and properness, there is a
closed discrete set of parameters outside which the resolution, the
divisor strata and the modifications of the fibers have the stated
properties. We add to this set the finitely many parameters at which
the fiber dimension of the image of a horizontal exceptional
component exceeds its generic value, which is at most $n-2$, together
with the images of the vertical components. At the remaining
parameters, each component of the restriction of an exceptional
divisor is exceptional on the fiber. After shrinking the disk, we may
assume that $0$ is the only possible exceptional parameter.

For $t\ne0$ the class $[P_t]=f_t^*[\gamma_t]$ is nef and big. Adding
the effective divisor $E_{\alpha,t}$, we see that $\mu_t^*\alpha_t$,
and hence $\alpha_t$, is big. Since $E_{\alpha,t}$ is
$f_t$-exceptional, the divisorial Zariski decomposition
\cite[Theorem~3.18 and Proposition~3.20]{BouZ} shows that the addition
of $E_{\alpha,t}$ does not change the volume of $[P_t]$. This volume
is the top self-intersection number of $[P_t]$. Therefore, by the
bimeromorphic invariance of the volume,
\[
 \vol(\alpha_t)=\int_{Z_t}P_t^n=\int_{Y_t}\gamma^n
 \qquad(t\ne0).
\]
The same argument, applied to the central model
\eqref{eq:central-reduced-model} together with
Lemma~\ref{lem:central-exceptionality}, gives
$
 \vol(\alpha_0)=\int_{Y_0}\gamma_0^n.
$
The fiber integrals of the closed form \(\gamma^n\) are constant
in the proper flat family \(Y\to\Delta\); this follows by
integration over the continuously varying fiber cycles, or by
Stokes' theorem for this family of cycles. The fibers are normal
and hence reduced, so that their fundamental cycles have
multiplicity one. Since $[\gamma]|_{Y_0}=\gamma_0$, the common value
is $\int_{Y_0}\gamma_0^n>0$. Thus $\vol(\alpha_t)$ is constant on the
smaller disk, and every $\alpha_t$ is big.
\end{proof}

For the analytic part of the argument we denote the relative
discrepancy divisor by $E_\mu$ and define a signed divisor $D$ by
\[
 E_\mu:=K_{Z/\Delta}-\mu^*K_{\X/\Delta},\qquad
 D:=E_\alpha+E_\mu-\mu^*B.
\]
Then
\begin{equation}\label{eq:signed-model-identity}
 c_1(K_{Z/\Delta})+[\mu^*\beta]+\varepsilon[\mu^*\omega]=[P]+[D].
\end{equation}
We write $D=D_+-D_-$, where $D_+$ and $D_-$ are the coefficientwise
positive and negative parts of $D$; thus $D_+,D_-\ge0$ have no common
components. The divisor $E_\mu$ is effective and $\mu$-exceptional,
and hence $f$-exceptional. Since $E_\alpha$ is also $f$-exceptional,
$D_+$ is $f$-exceptional; $D_-$ need not be $f$-exceptional.

Fix $\varepsilon>0$ and the model of
Proposition~\ref{prop:local-log-model}. We choose a closed discrete
set $\Sigma\subset\Delta$ as in its proof, and put
$\Delta^\circ:=\Delta\setminus\Sigma$ and
$Z^\circ:=q^{-1}(\Delta^\circ)$, so that the following conditions are
satisfied:
\begin{enumerate}
\item $q|_{Z^\circ}:Z^\circ\to\Delta^\circ$ is a smooth proper
submersion;
\item for every $t\in\Delta^\circ$, the maps
$\mu_t:Z_t\to X_t$ and $f_t:Z_t\to Y_t$ are modifications;
\item the union of the supports of $\mu^*B$, $E_\alpha$ and $E_\mu$
is relative SNC over $\Delta^\circ$;
\item for every $t\in\Delta^\circ$, each prime component of
$(E_\alpha+E_\mu)|_{Z_t}$ is $f_t$-exceptional.
\end{enumerate}
We call a parameter $t$ \emph{good} if $t\in\Delta^\circ$, and we
call the corresponding resolved fiber $Z_t=q^{-1}(t)$ a \emph{good
fiber}. These notions refer to the fixed model and to the chosen set
$\Sigma$, which may depend on $\varepsilon$. By a good fiber $X_t$ of
the original family we mean a fiber whose parameter is good in this
sense; we note that the original fibers $X_t$ are smooth for all
$t\in\Delta$, including those in $\Sigma$.

On every good fiber the coefficients of \(E_\mu-\mu^*B\) are greater
than $-1$ by the klt condition, and this bound remains valid for $D$,
since \(E_\alpha\) is effective. Furthermore, every component $F$ of
$D_+$ is orthogonal to $P$ on a good fiber. In fact, $F_t$ is
$f_t$-exceptional, and hence, by the projection formula,
\[
 \int_{F_t}P_t^{n-1}
 =\int_{(f_t)_*F_t}\gamma_t^{n-1}=0.
\]
These are the properties used in Section~\ref{sec:signed-analysis}.
The excluded parameters, at which the resolved fiber need not be
smooth, will be treated in Section~\ref{sec:log-descent} by descent
to the original smooth family.

\subsection{Continuation on a fixed disk}

The disk in Proposition~\ref{prop:local-log-model} may depend on
$\varepsilon$. In order to let $\varepsilon\to0$, we need the
constancy of the volume on a disk independent of $\varepsilon$. In
the following lemma we extend the local constancy from the big locus
to the whole connected disk. The proof rests on the fact that a limit
of parameters with the same positive volume again has positive
volume; for this we use the compactness of positive currents and
Boucksom's upper semicontinuity theorem \cite[Proposition~4.17]{BouV}.

\begin{lemma}\label{lem:clopen-volume}
Let $p:\X\to\Delta$ be a smooth proper K\"ahler family over a
connected disk, and let $\xi$ be a smooth real closed $(1,1)$-form
on $\X$. Suppose that, whenever \([\xi_t]\) is big, there is a
neighborhood of \(t\) in the disk on which all fiber classes are big
and have the same volume. Then, if one fiber class is big, every
fiber class is big and the volume is constant on the disk.
\end{lemma}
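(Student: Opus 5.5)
We argue by an open--closed argument on the connected disk. Let $\Omega\subset\Delta$ be the set of parameters $t$ at which $[\xi_t]$ is big. By hypothesis, every $t\in\Omega$ has a neighborhood contained in $\Omega$ on which the volume is constant. Hence $\Omega$ is open and $t\mapsto\vol([\xi_t])$ is locally constant on $\Omega$. It is nonempty by assumption. Fix a connected component $\Omega_1$ of $\Omega$; the volume takes a constant value $v>0$ on it. It suffices to show that $\Omega_1$ is closed in $\Delta$. Then $\Omega_1=\Delta$ by connectedness, and both assertions follow at once.

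Let $t_k\in\Omega_1$ converge to $t_\infty\in\Delta$. We must show that $[\xi_{t_\infty}]$ is big; then $t_\infty\in\Omega$. The neighborhood of $t_\infty$ supplied by the hypothesis can be taken to be a disk, hence connected, and it meets $\Omega_1$. It is therefore contained in $\Omega_1$, so $t_\infty\in\Omega_1$.

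To prove bigness at $t_\infty$, the plan is to combine a limit of currents with upper semicontinuity of the volume. For each $k$ choose a closed positive current $T_k=\xi_{t_k}+\ddc u_k$ in $[\xi_{t_k}]$ with $\int_{X_{t_k}}(T_k)_{\mathrm{ac}}^n\ge v/2$. Normalize $\sup u_k=0$. Work in a product chart $\X|_{U}\cong X_{t_\infty}\times U$ given by the $C^\infty$ trivialization of the submersion. Then the masses $\int T_k\wedge\omega_{t_k}^{n-1}=\int\xi_{t_k}\wedge\omega_{t_k}^{n-1}$ are cohomological, hence uniformly bounded. Since the complex structures vary smoothly, uniform compactness of normalized quasi-psh functions on nearby fibers gives a subsequence with $T_k\to T_\infty$ weakly. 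The limit $T_\infty$ is a closed positive $(1,1)$-current on $X_{t_\infty}$ in the class $[\xi_{t_\infty}]$. So $[\xi_{t_\infty}]$ is at least pseudoeffective.

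The main obstacle is positivity of the volume at $t_\infty$. The absolutely continuous part of $T_k^n$ is not weakly continuous, so $T_\infty$ alone does not control it. Here we invoke Boucksom's upper semicontinuity theorem \cite[Proposition~4.17]{BouV}: the volume is upper semicontinuous on $H^{1,1}$. Its proof shows that volumes are computed by currents with analytic singularities, whose Monge--Amp\`ere masses are limits of $\int(T_k)_{\mathrm{ac}}^n$ via Demailly regularization with loss $\delta[\omega]$. We apply this in a family form, transporting the classes $[\xi_{t_k}]$ to $X_{t_\infty}$ via the smooth trivialization. Concretely, we show
\[
 \vol\bigl([\xi_{t_\infty}]+\delta[\omega_{t_\infty}]\bigr)\ \ge\ \limsup_k\vol([\xi_{t_k}])=v
\]
for every $\delta>0$. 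For this we regularize the $T_k$ with loss $\delta\omega$, uniformly in $k$, using the family Skoda/Demailly estimates as in Lemma~\ref{common:metric-criterion}. Letting $\delta\downarrow0$ and using continuity of $\vol$ on the big cone and its boundary behaviour (\cite{BouV}) then gives $\vol([\xi_{t_\infty}])\ge v>0$. So $[\xi_{t_\infty}]$ is big, and the volume equals $v$ by local constancy.
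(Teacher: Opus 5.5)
Your open--closed structure matches the paper's proof: openness of $\Omega$, a component $\Omega_1$ with constant volume $v$, a limit point $t_\infty$, and the hypothesis giving a connected neighborhood of $t_\infty$ inside $\Omega_1$. Your compactness argument for the pseudoeffectivity of $[\xi_{t_\infty}]$ is also the paper's, except that you use normalized potentials where the paper uses the mass bound and the $\partial\bar\partial$-lemma. The paper then finishes in one line. It reads \cite[Proposition~4.17]{BouV} as upper semicontinuity of the volume \emph{under K\"ahler deformations}, which gives $\vol([\xi_{t_\infty}])\ge\limsup_k\vol([\xi_{t_k}])=v>0$ directly.

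The gap is in your treatment of this step. You read the cited result as upper semicontinuity on a single space $H^{1,1}$, and propose to transport the classes $[\xi_{t_k}]$ to $X_{t_\infty}$ by the smooth trivialization. Since $\xi$ is closed on $\X$, the transported de Rham classes all coincide with that of $\xi_{t_\infty}$. The volumes $\vol([\xi_{t_k}])$ differ only because the complex structures differ, so semicontinuity on the fixed space $H^{1,1}(X_{t_\infty},\R)$ says nothing about them.

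Your substitute argument does not establish $\vol([\xi_{t_\infty}]+\delta[\omega_{t_\infty}])\ge v$ either. That argument regularizes the $T_k$ with loss $\delta\omega$ uniformly in $k$. But the regularized currents live on the fibers $X_{t_k}$, so producing a current on $X_{t_\infty}$ again requires a weak limit. Nothing in your sketch prevents the absolutely continuous Monge--Amp\`ere masses from dropping in that limit, which is exactly the obstruction you identified yourself. The uniform Skoda-type bounds of Lemma~\ref{common:metric-criterion} control integrability of potentials, not these masses.

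So either invoke the deformation version of Boucksom's semicontinuity directly, as the paper does, or give a genuine proof of it; your last paragraph is not one. Once the inequality at $t_\infty$ is available, your final passage $\delta\downarrow0$, using continuity along decreasing K\"ahler perturbations, is fine but unnecessary, since the deformation statement already gives $\vol([\xi_{t_\infty}])\ge v$.
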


\begin{proof}
Let \(\Omega\) be the set of parameters $t$ for which $[\xi_t]$ is
big, and let \(\Omega_0\) be the connected component of \(\Omega\)
containing a given big parameter. By hypothesis \(\Omega_0\) is open,
and the volume is constant on \(\Omega_0\), equal to some \(v>0\).
Let \(t_j\in\Omega_0\) be a sequence converging to a parameter
\(t_\infty\), and choose positive currents $T_j\in[\xi_{t_j}]$. For a
fixed K\"ahler form $\omega$ on the total space, the masses
\[
 \int_{X_{t_j}}T_j\wedge\omega_{t_j}^{n-1}
 =\int_{X_{t_j}}\xi_{t_j}\wedge\omega_{t_j}^{n-1}
\]
are bounded for $t_j$ near $t_\infty$. Identifying nearby fibers by
an Ehresmann trivialization, we find a subsequence which converges
weakly to a closed current $T_{t_\infty}$. By means of local
holomorphic submersion charts we see that the limit is positive and
of type $(1,1)$ with respect to the complex structure of
$X_{t_\infty}$. Its de Rham class is the restriction of $[\xi]$.
Hence, by the $\partial\bar\partial$-lemma on the compact K\"ahler
manifold $X_{t_\infty}$, $T_{t_\infty}$ lies in the Bott--Chern class
$[\xi_{t_\infty}]$, which is therefore pseudoeffective.

By the upper semicontinuity of the volume under K\"ahler deformations
\cite[Proposition~4.17]{BouV}, we have
\[
 \vol([\xi_{t_\infty}])\geq
 \limsup_j\vol([\xi_{t_j}])=v>0,
\]
and a class with positive volume on a compact K\"ahler manifold is
big. By hypothesis, $t_\infty$ then has a connected open
neighborhood contained in \(\Omega\). This neighborhood meets
\(\Omega_0\), so that \(t_\infty\in\Omega_0\). Thus \(\Omega_0\) is
open and closed in the connected disk, and hence coincides with it.
\end{proof}

\begin{corollary}
\label{cor:common-domain}
In the canonicalized family above, put
\(\mathfrak a_t=c_1(K_{X_t}+B_t)+[\beta_t]\). If \(\mathfrak a_0\) is
pseudoeffective, then for every \(\varepsilon>0\) the classes
\(\mathfrak a_t+\varepsilon[\omega_t]\) are big on the original
connected disk and have constant volume there. Consequently,
\(\mathfrak a_t\) is pseudoeffective for every \(t\).
\end{corollary}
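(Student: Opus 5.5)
The plan is to apply Lemma~\ref{lem:clopen-volume} to the smooth closed form $\xi=\theta'+\varepsilon\omega$. Here $\theta'$ is a smooth representative of $c_1(K_{\X/\Delta}+B)+[\beta]$, namely $\Theta(h_K)+\sum_i b_i\Theta(h_i)+\beta$, so that $[\xi_t]=\mathfrak a_t+\varepsilon[\omega_t]$ for every $t$.

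We first check that the lemma applies, starting with the existence of a big fiber class. Since $\mathfrak a_0$ is pseudoeffective and $\omega_0$ is K\"ahler, $\mathfrak a_0+\varepsilon[\omega_0]$ contains a K\"ahler current, so it is big. Next comes the local hypothesis. Let $t_1$ be any parameter with $[\xi_{t_1}]$ big. The canonicalized family satisfies every hypothesis used in Section~\ref{sec:relative-log-geometry} with the base point $0$ replaced by $t_1$: the boundary is relative SNC with coefficients in $[0,1)$, each fiber pair is canonical, $\beta$ is semipositive, and $\omega$ is K\"ahler. We recenter the disk at $t_1$. Then Lemma~\ref{lem:fixed-boundary} and Proposition~\ref{prop:local-log-model}, whose only additional assumption is bigness of the central perturbed class $\alpha_{t_1}$, give a smaller disk around $t_1$ on which every $\alpha_t$ is big with constant volume. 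This is exactly the local hypothesis of Lemma~\ref{lem:clopen-volume}.

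Lemma~\ref{lem:clopen-volume} then shows that $\mathfrak a_t+\varepsilon[\omega_t]$ is big on the whole connected disk, with volume constant there. For pseudoeffectivity, fix $t$. For every $\varepsilon>0$ the class $\mathfrak a_t+\varepsilon[\omega_t]$ is big, hence pseudoeffective. The pseudoeffective cone is closed, so letting $\varepsilon\downarrow0$ shows that $\mathfrak a_t$ is pseudoeffective.

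The main obstacle is making sure the local model construction really is available at an arbitrary base point $t_1$, not only at $0$. The disk produced by Proposition~\ref{prop:local-log-model} may depend on both $t_1$ and $\varepsilon$, and it involves removing $C=B\wedge N(\alpha_{t_1})$ computed on the fiber over $t_1$. That is harmless, because the clopen lemma requires only some neighborhood of $t_1$. We also need the shrinkings in Lemma~\ref{lem:relative-mmp-interface} and in \cite[Lemma~4.18]{LPWZ} to be centered at $t_1$; this holds because those statements concern an arbitrary point of a relatively compact disk. Finally, we must check that the canonicalization of Lemma~\ref{lem:canonicalization} was performed on a disk containing all the parameters considered. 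We can arrange this by working over a fixed relatively compact disk from the start, since its termination argument uses only the finite boundary cone complex over a compact base.
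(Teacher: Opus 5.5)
Your proposal is correct and follows the paper's own proof. It gets bigness at $0$ from pseudoeffectivity plus a K\"ahler class, takes the local hypothesis of Lemma~\ref{lem:clopen-volume} from Proposition~\ref{prop:local-log-model} recentered at each big parameter, and obtains pseudoeffectivity of $\mathfrak a_t$ from the closedness of the pseudoeffective cone as $\varepsilon\downarrow0$. Your remarks on recentering and on the disk on which the canonicalization is performed only make explicit what the paper leaves implicit.
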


\begin{proof}
Fix \(\varepsilon>0\). Since $\mathfrak a_0$ is pseudoeffective,
$\mathfrak a_0+\varepsilon[\omega_0]$ is big. At every parameter where
the perturbed class is big, Proposition~\ref{prop:local-log-model}
gives a neighborhood on which the classes are big and have constant
volume. Hence, by Lemma~\ref{lem:clopen-volume}, the classes are big
and have constant volume on the whole disk. Letting
\(\varepsilon\to0\) on each fixed fiber, we see that $\mathfrak a_t$ is
pseudoeffective, since the pseudoeffective cone is closed.
\end{proof}

Thus the local models may depend both on the base parameter and on
$\varepsilon$, but the disk in Corollary~\ref{cor:common-domain} is
independent of $\varepsilon$, and the analytic limit can be taken on
this fixed disk.

In the extension problem, $\beta$ is the curvature of a smooth
semipositive metric on the rational line bundle $M$. If there is a
nonzero section of $m(K_{X_0}+B_0+M_0)$ in an admissible degree, then
$\mathfrak a_0$ is pseudoeffective, as required in
Corollary~\ref{cor:common-domain}.

\begin{proof}[Proof of Theorem~\ref{thm:real-log-volume}]
First we assume that every $b_i<1$. Let
$\mathfrak a_t:=c_1(K_{X_t}+B_t)+[\beta_t]$ be the real adjoint class
in the theorem, and let $S\subset\Delta$ be the set of parameters for
which it is pseudoeffective. We first show that $S$ is open and that
the volume is locally constant on $S$. Fix $t_0\in S$. By
Lemma~\ref{lem:canonicalization} for real coefficients, there is,
over a small disk $U$ around $t_0$, a relative crepant modification
$\tau:\widetilde\X\to p^{-1}(U)$ with canonical fiber pairs.
With the twist $\tau^*\beta$, the adjoint classes of this family are
$\widetilde{\mathfrak a}_t=\tau_t^*\mathfrak a_t$. We choose a
K\"ahler form $\widetilde\omega$ on $\widetilde\X$.

We apply Corollary~\ref{cor:common-domain} with center $t_0$. Then
every $\widetilde{\mathfrak a}_t$ is pseudoeffective on $U$, and, for
each $\varepsilon>0$, the classes
$\widetilde{\mathfrak a}_t+\varepsilon[\widetilde\omega_t]$ are big
and have constant volume on the same disk $U$. Hence, by the
continuity of the volume along decreasing K\"ahler perturbations
\cite{BouV},
\[
 \vol(\widetilde{\mathfrak a}_t)
 =\lim_{\varepsilon\downarrow0}
    \vol(\widetilde{\mathfrak a}_t+\varepsilon[\widetilde\omega_t])
 =\vol(\widetilde{\mathfrak a}_{t_0}).
\]
By the bimeromorphic invariance of pseudoeffectivity and of the
volume, we have $U\subset S$ and
$\vol(\mathfrak a_t)=\vol(\mathfrak a_{t_0})$ for every $t\in U$.

The compactness argument for positive currents in the proof of
Lemma~\ref{lem:clopen-volume} shows also that $S$ is closed in
$\Delta$. Since $\Delta$ is connected, $S$ is either empty or all of
$\Delta$. In the first case the volume vanishes by convention; in the
second case it is constant by the local constancy just proved. This
proves both assertions for coefficients strictly less than one.

Now let $0\le b_i\le1$. If no $\mathfrak a_t$ is pseudoeffective,
both assertions hold, all volumes being zero by convention. Otherwise
we fix $t_0$ such that $\mathfrak a_{t_0}$ is pseudoeffective, and
let $t_1\in\Delta$ be arbitrary. We choose a relatively compact
subdisk $U\Subset\Delta$ containing $t_0$ and $t_1$, and fix a
K\"ahler form $\omega$ on $\X$. Smooth Hermitian metrics on the
bundles $\OO_\X(D_i)$ give a smooth closed representative $\theta_B$
of $[B]$. Since $p^{-1}(\overline U)$ is compact, we can choose
$C>0$ such that
\[
 \eta:=C\omega-\theta_B>0
\]
on this compact set. In particular, $\eta$ is a K\"ahler form on
$p^{-1}(U)$. For $0<\lambda<1$ we put
\[
 B_\lambda=(1-\lambda)B,\qquad
 \beta_\lambda=\beta+\lambda C\omega.
\]
The coefficients of $B_\lambda$ are strictly less than one, and
$\beta_\lambda$ is smooth, closed and semipositive. The corresponding
adjoint classes are
\[
 \mathfrak a_{t,\lambda}
 =c_1(K_{X_t}+B_{\lambda,t})+[\beta_{\lambda,t}]
 =\mathfrak a_t+\lambda[\eta_t].
\]
At $t_0$ this class is the sum of a pseudoeffective class and a
K\"ahler class, and hence it is big. Applying the case already proved
to $p^{-1}(U)\to U$, with boundary $B_\lambda$ and twist
$\beta_\lambda$, we see that $\mathfrak a_{t,\lambda}$ is
pseudoeffective for every $t\in U$ and that its volume is independent
of $t$. Letting $\lambda\downarrow0$ on each fixed fiber, we see that
$\mathfrak a_t$ is pseudoeffective, since the pseudoeffective cone is
closed. By the continuity of the volume along decreasing K\"ahler
perturbations \cite{BouV}, we have
\[
 \vol(\mathfrak a_{t_1})
 =\lim_{\lambda\downarrow0}\vol(\mathfrak a_{t_1,\lambda})
 =\lim_{\lambda\downarrow0}\vol(\mathfrak a_{t_0,\lambda})
 =\vol(\mathfrak a_{t_0}).
\]
Since $t_1$ is arbitrary, both assertions hold on the whole disk.
The last assertion of the theorem follows, since a class is big if
and only if its volume is positive.
\end{proof}

\begin{remark*}
Theorem~\ref{thm:real-log-volume} admits coefficients equal to one,
but it concerns only real adjoint classes. The extension of
individual sections in this case requires additional hypotheses,
which we treat in Section~\ref{sec:applications}.
\end{remark*}

\begin{corollary}
\label{cor:boundary-volume-function}
Fix relative divisors $D_1,\ldots,D_r$ whose union has relative
simple normal crossings, and a smooth closed semipositive form
$\beta$. For each $t$ define
\[
 \mathcal P_t=
 \left\{b=(b_1,\ldots,b_r)\in[0,1]^r:
   c_1\left(K_{X_t}+\sum_i b_iD_{i,t}\right)+[\beta_t]
   \text{ is pseudoeffective}\right\}.
\]
Then $\mathcal P_t$ is a closed convex subset of $[0,1]^r$, possibly
empty, and it is independent of $t$. Moreover, the function
\[
 b\longmapsto
 \vol\left(c_1\left(K_{X_t}+\sum_i b_iD_{i,t}\right)
               +[\beta_t]\right)
\]
on $[0,1]^r$ is independent of $t$.
\end{corollary}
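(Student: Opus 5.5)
The plan is to apply Theorem~\ref{thm:real-log-volume} once for each coefficient vector $b\in[0,1]^r$, and then to establish convexity and closedness separately on a single fiber.

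\emph{Independence of $t$.} Fix $b\in[0,1]^r$ and put $B_b=\sum_i b_iD_i$. This is an effective real divisor with coefficients in $[0,1]$. Its support lies in the union of the $D_i$, which is relative SNC; a subset of the components of a relative SNC union is again relative SNC, since we may just drop coordinates in the charts. Theorem~\ref{thm:real-log-volume}(1), applied with boundary $B_b$ and twist $\beta$, shows that the class $c_1(K_{X_t}+B_{b,t})+[\beta_t]$ is pseudoeffective either for every $t$ or for none. Hence $b\in\mathcal P_t$ does not depend on $t$. Part~(2) shows that the volume at $b$ is independent of $t$; when no fiber class is pseudoeffective, all these volumes vanish by convention. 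Since $b$ is arbitrary, both the set $\mathcal P_t$ and the function on $[0,1]^r$ are independent of $t$.

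\emph{Convexity and closedness.} Now fix one fiber $X_t$ and consider the map
\[
 b\longmapsto \mathfrak c(b):=c_1(K_{X_t})+\sum_i b_i\,c_1(D_{i,t})+[\beta_t]
 \in H^{1,1}_{\mathrm{BC}}(X_t,\R).
\]
This map is affine in $b$. The pseudoeffective cone is convex, so a convex combination of the currents $T_b\in\mathfrak c(b)$ and $T_{b'}\in\mathfrak c(b')$ is a positive current in $\mathfrak c(\lambda b+(1-\lambda)b')$. The pseudoeffective cone is also closed in the finite-dimensional space $H^{1,1}_{\mathrm{BC}}(X_t,\R)$, as recalled in the introduction via \cite{BouZ}. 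Therefore $\mathcal P_t$, being the preimage of a closed convex cone under an affine continuous map, intersected with $[0,1]^r$, is closed and convex. It may be empty.

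\emph{Expected obstacle.} There is no real obstacle here, because all the analytic and birational content lies in Theorem~\ref{thm:real-log-volume}. The only point that needs checking is that the hypotheses of that theorem really hold for every sub-boundary $B_b$. This includes vectors $b$ with some $b_i=1$, which the theorem allows, and vectors with some $b_i=0$, where the component is simply dropped and relative SNC is preserved.
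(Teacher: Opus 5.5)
Your proposal is correct and follows the paper's proof: you apply Theorem~\ref{thm:real-log-volume} to each coefficient vector $b$ to get both invariance statements. You then obtain closedness and convexity on a fixed fiber by pulling back the closed convex pseudoeffective cone under the affine map $b\mapsto\mathfrak c(b)$ and intersecting with $[0,1]^r$. Your extra check that each sub-boundary $B_b$ remains relative SNC with coefficients in $[0,1]$ is exactly the hypothesis the paper uses implicitly.
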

\begin{proof}
Applying Theorem~\ref{thm:real-log-volume} to each coefficient vector
$b$ separately, we obtain both invariance statements. On a fixed
fiber the map from $b$ to the adjoint class is affine and continuous.
Hence the inverse image of the closed convex pseudoeffective cone,
intersected with $[0,1]^r$, is closed and convex.
\end{proof}

\section{Variation of metrics with zeros and poles}
\label{sec:signed-analysis}

In this section we shall prove that the solutions of the
Monge--Amp\`ere equations on the fibers of a relative model define a
potential which is jointly psh with respect to the background form.
The boundary gives poles of the density, and the exceptional divisors
give zeros. We describe the density by a signed divisor, whose
positive and negative parts correspond to the zeros and the poles. In
the curvature estimate the poles have a favorable sign, while the
zeros give an error term, which vanishes by the orthogonality proved
in Section~\ref{sec:relative-log-geometry}. We follow the argument
for effective divisors in \cite[Sections~5--7]{LPWZ} and indicate the
changes needed in the presence of poles.

Throughout this section the perturbation $\varepsilon>0$ used in the
construction of the model is fixed, and constants may depend on the
model and hence on $\varepsilon$. We introduce three further
parameters: $\delta$ makes the semipositive background K\"ahler,
$a_1$ smooths the zeros, and $a_2$ smooths the poles. The estimates
used in passing to the limit are uniform in these three parameters.
We let them tend to zero in the order $a_1\downarrow0$, then
$a_2\downarrow0$, and finally $\delta\downarrow0$.

We shall use the regularity theory for the Perron problem of the
homogeneous Monge--Amp\`ere equation \cite[Corollary~1.3]{CTW17},
\cite[Theorems~1.1 and~2.1]{CTW18}, the positivity of local weighted
Bergman kernels \cite[Theorem~1.1]{Ber06}, and the \(L^1\)
asymptotics on the fibers of Bergman kernels for bounded densities
\cite[Theorem~2.1 and Corollary~2.2]{BB17}, applied to proper K\"ahler
families as in \cite[Sections~5--6]{LPWZ}. The hypothesis on the
boundary data in the Perron problem will be verified below. For fixed
regularization parameters, the Bergman weight is always a bounded psh
potential.

\subsection{The model equation and uniform estimates}

For the fixed model in Proposition~\ref{prop:local-log-model}, recall
that $q=p\circ\mu=g\circ f:Z\to\Delta$ is the map to the base.
We keep the excluded set $\Sigma$ chosen in
Section~\ref{sec:relative-log-geometry}, and recall the notation
\[
 \Delta^\circ:=\Delta\setminus\Sigma,
 \qquad Z^\circ:=q^{-1}(\Delta^\circ).
\]
Thus $\Delta^\circ$ parametrizes the good fibers, and the restricted
family $q|_{Z^\circ}:Z^\circ\to\Delta^\circ$ is smooth and proper.
Its fibers are connected, since each $\mu_t:Z_t\to X_t$ is a
modification and $X_t$ is connected.
Throughout this section we denote this restricted family simply by
$q:Z\to\Delta^\circ$.

The following proposition is an extension of
\cite[Proposition~7.1]{LPWZ} to densities with divisorial zeros and
with poles of order less than one. The proof uses the same comparison
with Perron envelopes, together with additional estimates for the
pole factors.

\begin{proposition}
\label{prop:signed-model}
Let $q:Z\to\Delta^\circ$ be a smooth proper K\"ahler family with
connected fibers of dimension $n$, let $\eta$ be a K\"ahler form on $Z$, and
let $P\ge0$ be a smooth closed real $(1,1)$-form whose restriction
$P_t$ has a big class on every fiber.
Let $E,C\ge0$ be real divisors with relative SNC union, and write
\[
 D=E-C=\sum_\ell\lambda_\ell Q_\ell
\]
with distinct prime divisors $Q_\ell$; assume that $\lambda_\ell>-1$
for every $\ell$. Choose smooth Hermitian metrics $H_\ell$ on
$\mathcal O_Z(Q_\ell)$, let $s_{Q_\ell}$ be the canonical sections,
and put
\[
 \Theta_D:=\sum_\ell\lambda_\ell\Theta(H_\ell),\qquad
 |s_D|^2:=\prod_\ell |s_{Q_\ell}|_{H_\ell}^{2\lambda_\ell};
\]
these expressions make sense for real coefficients, and $|s_D|^2$ is
defined off $\Supp D$. Let $\theta_\eta$ be the curvature of the
metric on $K_{Z/\Delta^\circ}$ induced by the \emph{same} vertical
volume forms $\eta_t^n$ used in the equation below. Suppose that
there are a smooth real function $h$ and a smooth closed semipositive
real $(1,1)$-form $b$ such that
\begin{equation}\label{eq:analytic-model-identity}
 \theta_\eta+b=P+\Theta_D+\ddc h,\qquad b\ge0,
 \qquad \int_{E_{i,t}}P_t^{n-1}=0
\end{equation}
for every prime component $E_i$ of $E$ and every $t\in\Delta^\circ$.
Put
\[
 f:=e^{-h}|s_D|^2.
\]
Then on each fiber there is a unique bounded function
$u_t\in\PSH(Z_t,P_t)$ satisfying
\begin{equation}\label{eq:analytic-model-equation}
 (P_t+\ddc u_t)^n=e^{u_t}f_t\eta_t^n,
\end{equation}
and the function $u(x):=u_{q(x)}(x)$ is $P$-psh on $Z$.
\end{proposition}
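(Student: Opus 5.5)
We follow the strategy of \cite[Proposition~7.1]{LPWZ}: regularize the density, make the background K\"ahler, prove plurisubharmonic variation for the regularized problems via Bergman kernels and Perron envelopes, and then remove the regularizations in the order announced at the beginning of this section. First, fiberwise existence and uniqueness. Since $\lambda_\ell>-1$ and the union of the supports is relative SNC, the density $f_t$ lies in $L^q(\eta_t^n)$ for some $q>1$, locally uniformly in $t$, exactly as in \eqref{common:boundary-Lq}. The class $[P_t]$ is big and semipositive, hence nef and big, so \cite[Theorems~3.1 and~4.1]{BEGZ} give a full-mass solution of \eqref{eq:analytic-model-equation} with minimal singularities, which is therefore bounded; uniqueness follows from the comparison principle for $e^{u}$-twisted equations. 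The measure $e^{u_t}f_t\eta_t^n$ has bounded total mass $\int P_t^n$, which is constant in $t$. Together with the Skoda bound \eqref{common:skoda} and \cite[Theorem~1.9]{DGG}, this gives a uniform two-sided bound $|u_t|\le C$ on compact subsets of $\Delta^\circ$.

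Next, the regularization. Replace $P$ by $P+\delta\eta$, which is K\"ahler, and replace $|s_D|^2$ by
\[
 f_{a_1,a_2}=e^{-h}\prod_{\lambda_\ell>0}(|s_{Q_\ell}|^2+a_1)^{\lambda_\ell}
 \prod_{\lambda_\ell<0}(|s_{Q_\ell}|^2+a_2)^{\lambda_\ell}.
\]
This density is smooth and positive. By the Aubin--Yau theorem each fiber carries a smooth solution $u^{\delta,a_1,a_2}_t$. In order to prove joint $(P+\delta\eta)$-plurisubharmonicity, we compute via \eqref{eq:analytic-model-identity}
\[
 -\ddc\log f_{a_1,a_2}+\theta_\eta+b=P+\ddc h+\text{(error)}+b .
\]
For the pole factors, the error is $\ddc$ of $-\lambda_\ell\log(|s|^2+a_2)$ minus its limit. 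Since $\lambda_\ell<0$, the regularized pole term $-\lambda_\ell\,\ddc\log(|s|^2+a_2)\ge-\lambda_\ell\,\Theta(H_\ell)\cdot(\text{cutoff})$, so it contributes with the favourable sign: the corresponding relative canonical twist is bounded below by the smooth form $(1+\lambda_\ell)$-multiples of curvatures that are absorbed in $b\ge0$ and $\delta\eta$. For the zero factors, the error is a negative term of the form $-\lambda_\ell\frac{a_1|D's|^2}{(|s|^2+a_1)^2}$, which concentrates on $E_i$. We then run the Bergman kernel scheme. The weighted Bergman kernels of $k(P+\delta\eta)+\theta_\eta$ with weight $e^{-k u}f$ are psh in families by \cite[Theorem~1.1]{Ber06}, and their $L^1$ limit on fibers is $u_t$ by \cite{BB17}. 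The Perron envelope of the homogeneous Monge--Amp\`ere equation, with regularity from \cite{CTW17,CTW18}, identifies the limit with the fiberwise solution and shows that $u^{\delta,a_1,a_2}+\kappa(a_1)\,\rho$ is $(P+\delta\eta)$-psh. Here $\kappa(a_1)$ measures the zero error and $\rho$ is a local psh exhaustion of the base.

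The main obstacle is to show that the zero error disappears as $a_1\downarrow0$. We would mimic LPWZ. The negative contribution enters the variation only through a fiber integral of the error current against $(P_t+\ddc u_t)^{n-1}$. As $a_1\to0$, this error converges to a multiple of $[E_{i,t}]$, and the pairing tends to $\int_{E_{i,t}}(P_t+\delta\eta_t)^{n-1}$ plus terms controlled by uniform bounded potentials. Letting $\delta\to0$ at the end, this tends to $\int_{E_{i,t}}P_t^{n-1}=0$ by the orthogonality hypothesis in \eqref{eq:analytic-model-identity}; the orthogonality is used in exactly this step. The uniform $L^\infty$ bounds from the first step, which are independent of $a_1$, $a_2$ and $\delta$, give convergence of the potentials. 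We use decreasing or monotone limits when $a_2\downarrow0$ (the density increases), and uniqueness of bounded solutions identifies the limits. Finally, the $P$-psh property passes to limits in $L^1_{\mathrm{loc}}$ of uniformly bounded quasi-psh functions, which yields that $u(x)=u_{q(x)}(x)$ is $P$-psh on $Z$.
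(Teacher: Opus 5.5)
Your fiberwise step (a bounded full-mass solution from \cite{BEGZ}, uniqueness by comparison, uniform bounds from Skoda and \cite[Theorem~1.9]{DGG}) is fine. However, the joint positivity, which is the content of the proposition, is not established.

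\emph{The intermediate claim is not available.} You assert that the Bergman kernel scheme and the Perron envelope make $u^{\delta,a_1,a_2}+\kappa(a_1)\rho$ jointly $(P+\delta\eta)$-psh. These tools do not give such a statement, and it is not to be expected.
\begin{itemize}
\item The regularized equation is a twisted K\"ahler--Einstein equation. Its twist contains $-\sum_i d_i$ times a smoothing of $[E_i]$ of size about $a_1^{-1}$ near $E$. So there is no pointwise lower bound, uniform in $a_1$, for the horizontal curvature of $P_\delta+\ddc u^{\delta,a_1,a_2}$ that a base term $\kappa\,\ddc\rho$ could compensate.
\item For fixed $\delta>0$ we have $\int_{E_{i,t}}(P_t+\delta\eta_t)^{n-1}>0$. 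Hence the zero error does not vanish as $a_1\downarrow0$, as your own third paragraph shows, so $\kappa$ cannot depend on $a_1$ alone. There is also no reason for $u_\delta$ itself to be jointly $(P+\delta\eta)$-psh.
\item The poles are not harmless in the regularized equation either. The form $\ddc\log(|s_{C_j}|^2+a_2)+\Theta(H_j)$ contains $\frac{a_2}{|s_{C_j}|^2+a_2}\Theta(H_j)$. Its negative part has unit size on $\{|s_{C_j}|^2\lesssim a_2\}$, and it is absorbed neither by $b$ nor, uniformly in $a_2$, by $\delta\eta$.
\item The Bergman kernels are misidentified. There is no line bundle $k(P+\delta\eta)+\theta_\eta$ for a transcendental form. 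The kernels that can be used are local weighted kernels of the envelope potential $v=a_\delta+U$ on vertical balls. Their asymptotics \cite{BB17} recover $(P_{\delta,t}+\ddc U_t)^n$, not $u_t$.
\end{itemize}

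\emph{The missing idea} is that positivity is never proved for any regularized solution; it is transferred from the Perron envelope to $u$ by comparison.
\begin{itemize}
\item Let $U$ be the envelope over $\mathbb D$ with boundary values $u_{\delta,a_1,a_2}$, and put $T=P_\delta+\ddc U$.
\item Berndtsson's positivity \cite{Ber06} gives $\ddc(F_k^+T^n)\ge-[E]\wedge T^n$ for $F_k=\log\bigl(c_nk^{-n}K_{kv}e^{-kv}/(e^Uf\rho)\bigr)$. Here the \emph{unregularized} density $f$ is used, so that the auxiliary function satisfies $\ddc\Xi_E=b+\delta\eta+[C]\ge0$. This is how the favorable sign of the poles enters.
\item Green's formula on the base bounds $\int_{Z_\tau}\log^+\bigl(T_\tau^n/(e^{U_\tau}f_\tau\eta_\tau^n)\bigr)T_\tau^n$ by two terms. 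The first is a boundary term with $\log^+(f_{a_1,a_2}/f)$; it sees only the zero factors and is $O(a_1)$. The second is $\int q^*g_\tau\,T^n\wedge[E]\le C\sum_i d_i\int_{E_{i,t}}P_{\delta,t}^{n-1}$, by \cite[Lemma~6.6]{LPWZ} using only the sup bound of $U$. Orthogonality is used only here, as $\delta\downarrow0$.
\item After the iterated limits, $(P_\tau+\ddc U_{0,\tau})^n\le e^{U_{0,\tau}}f_\tau\eta_\tau^n$. The comparison principle then gives $u_\tau\le U_{0,\tau}$.
\item Since $U_\delta$, plus local potentials, is psh with boundary values $u_\delta$, its submean inequality along holomorphic graphs over $\mathbb D$ passes to $u$.
\end{itemize}
Without this comparison step, your final passage to $L^1_{\mathrm{loc}}$ limits of jointly psh functions has nothing to act on.
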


We compare the solutions on the fibers with the Perron envelope of the
homogeneous Monge--Amp\`ere equation over a disk in the base. First
we regularize the equation and obtain uniform bounds
(Lemma~\ref{lem:signed-regularization-details}). Next, by a local
inequality for Bergman kernels and by Green's formula, we estimate the
positive part of the logarithm of the ratio of the Monge--Amp\`ere
measure of the envelope to the prescribed measure
(Lemmas~\ref{lem:signed-local-current-details}
and~\ref{lem:signed-green-details}). The error terms vanish when the
regularizations are removed (Lemma~\ref{lem:signed-error-details}).
Finally, the comparison principle shows that the solutions on the
fibers lie below the limiting envelope, and this gives the submean
inequalities needed for the joint positivity
(Lemma~\ref{lem:signed-limit-details}).

Fix a disk $\mathbb D$ in the base with
$\overline{\mathbb D}\subset\Delta^\circ$; where regularity at the
boundary is needed, we work over a slightly larger closed disk. We
define the positive and negative parts of $D$ by
\[
 D_+:=\sum_\ell\max\{\lambda_\ell,0\}Q_\ell,\qquad
 D_-:=\sum_\ell\max\{-\lambda_\ell,0\}Q_\ell.
\]
If we replace $E$ and $C$ by $D_+$ and $D_-$, their common components
cancel, and \eqref{eq:analytic-model-identity} and the orthogonality
assumption remain valid, since $D_+\le E$. We may therefore write
\[
 E=\sum_i d_iE_i,\qquad C=\sum_j c_jC_j,
 \qquad d_i>0,\quad 0<c_j<1,
\]
where the zero divisor $E$ and the pole divisor $C$ have no common
components. We denote by $s_{E_i},s_{C_j}$ their canonical sections,
with the smooth metrics chosen above, and we suppress the metrics in
the norms. In particular,
\[
 f=e^{-h}\prod_i|s_{E_i}|^{2d_i}\prod_j|s_{C_j}|^{-2c_j}.
\]
The $L^p$ norms on the fibers are taken with respect to $\eta_t^n$,
unless a coordinate Lebesgue measure is specified. Constants $c,C>0$
may change from line to line, and a constant is called uniform if it
does not depend on $\delta,a_1,a_2$ or on $t\in\overline{\mathbb D}$.

The next lemma is an adaptation of \cite[Lemma~5.5]{LPWZ} to the
density $f$. In the presence of poles the regularized densities need
not be uniformly bounded, but they are uniformly bounded in $L^p$ for
some $p>1$. This weaker bound suffices for the estimates and for the
two successive limits of the densities.

\begin{lemma}
\label{lem:signed-regularization-details}
For $0<\delta,a_1,a_2\le1$, put
\[
 P_\delta:=P+\delta\eta,\qquad
 f_{a_1,a_2}:=e^{-h}
     \prod_i(|s_{E_i}|^2+a_1)^{d_i}
     \prod_j(|s_{C_j}|^2+a_2)^{-c_j}.
\]
We use this formula also when $a_1=0$ or $a_2=0$, interpreted almost
everywhere; thus $f_{0,0}=f$. Let $u_{\delta,a_1,a_2,t}$ denote
the smooth $P_{\delta,t}$-psh solution of
\[
 (P_{\delta,t}+\ddc u_{\delta,a_1,a_2,t})^n
   =e^{u_{\delta,a_1,a_2,t}}f_{a_1,a_2,t}\eta_t^n
\]
and let $u_{\delta,a_1,a_2}$ be the function on
$q^{-1}(\overline{\mathbb D})$ whose restrictions to the fibers are
these solutions. Then these solutions depend smoothly on $t$ and
satisfy
\begin{equation}\label{eq:model-regularization-bound}
 \sup_{q^{-1}(\overline{\mathbb D})}|u_{\delta,a_1,a_2}|\le C_{\mathbb D}
\end{equation}
with $C_{\mathbb D}$ independent of $\delta,a_1,a_2$.
For fixed $\delta$, letting first $a_1\downarrow0$ and then
$a_2\downarrow0$, we obtain uniform limits on
$q^{-1}(\overline{\mathbb D})$ and a continuous function $u_\delta$.
As $\delta\downarrow0$, the functions $u_\delta$ decrease to a
bounded upper semicontinuous function $u$ whose restrictions to the
fibers satisfy \eqref{eq:analytic-model-equation}.
\end{lemma}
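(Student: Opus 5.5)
Existence, uniqueness and smooth dependence on $t$ come from the Aubin--Yau theorem for fixed regularization parameters. For $\delta>0$ the form $P_{\delta,t}=P_t+\delta\eta_t$ is K\"ahler on every fiber, and $f_{a_1,a_2}$ is smooth and positive when $a_1,a_2>0$. The equation $(P_{\delta,t}+\ddc u)^n=e^{u}f_{a_1,a_2,t}\eta_t^n$ therefore has a unique smooth solution on each fiber. Since $q$ is a smooth proper submersion over $\overline{\mathbb D}$, the implicit function theorem applies to the linearized operator $\Delta_{P_\delta+\ddc u}-1$, which is invertible because of the $e^u$ term. Hence the solutions depend smoothly on $t$.

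\textbf{Uniform upper and lower bounds.} For the upper bound I integrate the equation and use the mass identity. Comparing $\sup u$ with a Skoda-type lower bound for $\int e^{u}\,d\nu_t$ uses the family Skoda estimate \cite[Theorem~3.4]{DGG} for $C\eta_t$-psh functions, exactly as in Step~1 of the proof of Lemma~\ref{common:metric-criterion}. The key inputs are that $\int_{Z_t}(P_t+\delta\eta_t)^n\le C$ uniformly, and that $f_{a_1,a_2}\ge e^{-h}a_1^{\,\cdot}\cdots$ is not uniformly bounded below. So I first bound $\int e^{u}f\,\eta^n\le C$ and then use H\"older, with $f^{-\epsilon}\in L^1$ uniformly; this works since the $d_i$ are finite and $E$ is SNC. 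This gives $\sup u\le C$.

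The lower bound is the crux. The densities $f_{a_1,a_2}$ are uniformly bounded in $L^p(\eta_t^n)$ for some $p>1$, because $(|s_{C_j}|^2+a_2)^{-c_j}\le|s_{C_j}|^{-2c_j}$ with $c_j<1$, the zero factors are bounded, and the SNC charts integrate as in \eqref{common:boundary-Lq}. Hence the Ko{\l}odziej-type estimate of \cite[Theorem~1.9]{DGG} in the family setting gives $u-\sup u\ge V_{P_{\delta,t}}-C$. Its constant depends only on the $L^p$ bound and the Skoda constants, and not on the volume $\int(P_t+\delta\eta_t)^n\ge\int P_t^n>0$. Here $P_{\delta,t}$ is K\"ahler, but $V_{P_{\delta,t}}\ge V_{P_t}$, and I need a bound independent of $\delta$. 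For this I use that $P\ge0$ is smooth: then $0\in\PSH(Z_t,P_t)$, so $V_{P_{\delta,t}}\ge0$. I also need $\sup u$ bounded below. This follows from $\int(P_\delta+\ddc u)^n\ge\int P_t^n=c>0$ and $\int e^{u}f\le e^{\sup u}\|f\|_{L^1}$. Together these give \eqref{eq:model-regularization-bound}.

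\textbf{Limits.} For fixed $\delta,a_2$, letting $a_1\downarrow0$ makes $f_{a_1,a_2}$ decrease monotonically to $f_{0,a_2}$ in $L^p$. The $L^p$-stability estimate of Ko{\l}odziej type, applied with the uniform $L^\infty$ bounds, gives uniform convergence on each fiber. The estimate is uniform in $t$, so the convergence is uniform on $q^{-1}(\overline{\mathbb D})$. I argue the same way as $a_2\downarrow0$, where the densities now increase to $f$ and stay bounded in $L^p$. Continuity of $u_\delta$ follows from uniform limits of smooth functions.

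For the monotonicity in $\delta$, take $\delta'<\delta$. Then $u_{\delta'}$ is a subsolution for the $\delta$-equation, since $(P_\delta+\ddc u_{\delta'})^n\ge e^{u_{\delta'}}f\eta^n$. By the comparison principle for $e^u$-type equations, $u_{\delta'}\le u_\delta$. The decreasing limit $u$ is therefore bounded by \eqref{eq:model-regularization-bound} and upper semicontinuous. On each fiber, continuity of Monge--Amp\`ere operators along decreasing bounded sequences \cite{BEGZ} gives \eqref{eq:analytic-model-equation}. Uniqueness of bounded solutions then follows from the comparison principle in the big class $[P_t]$.

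I expect the main obstacle to be the $\delta$-uniform lower bound and the uniformity in $t$ of the stability estimates. The $\delta$-uniform lower bound I handle via $P\ge0$ and the $L^p$ bound on the poles. For the uniformity in $t$, I invoke the family version of DGG with the local product charts of $q$.
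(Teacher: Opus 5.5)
Your outline follows the paper's proof closely: Aubin--Yau and the implicit function theorem, then Skoda and Jensen for $\sup u$, then \cite[Theorem~1.9]{DGG} with $V_{P_{\delta,t}}=0$, then comparison for the monotonicity in $\delta$, and finally Bedford--Taylor for the decreasing limit. Your H\"older argument with $f_{a_1,a_2}^{-\epsilon}\in L^1$ is a valid substitute for the paper's Jensen inequality against the probability measure $f_{a_1,a_2}\eta_t^n/\int f_{a_1,a_2}\eta_t^n$. The paper's version uses the mass lower bound coming from $f_{a_1,a_2}\ge f_{0,1}$ instead.

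The step that does not go through as written is the removal of $a_1$ and $a_2$. A Ko{\l}odziej-type stability estimate bounds $\|u_1-u_2\|_\infty$ by a power of one of two quantities. The first is $\|e^{u_1}f_1-e^{u_2}f_2\|_{L^1}$, the difference of the Monge--Amp\`ere densities. It contains $e^{u_1}-e^{u_2}$, so using it is circular. The second is an integral norm of $u_2-u_1$, which you have not estimated. The uniform $L^\infty$ bounds make the constants uniform, but they supply neither quantity.

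The missing input, which the paper provides, uses the exponential structure. If $f_1\ge f_2$, the comparison principle gives $u_1\le u_2$. Both measures have mass $V_\delta$, so the inequality $x\le e^x-1$ gives
\[
 \int_{Z_t}(u_2-u_1)\,e^{u_1}f_1\,\eta_t^n
 \le\int_{Z_t}(e^{u_2}-e^{u_1})f_1\,\eta_t^n
 =\int_{Z_t}e^{u_2}(f_1-f_2)\,\eta_t^n
 \le C\|f_1-f_2\|_{L^1}.
\]
You then need a stability theorem that bounds $\sup(u_2-u_1)$ by a power of this integral against $e^{u_1}f_1\eta_t^n$, whose density is uniformly in $L^p$; the paper uses \cite[Theorem~2.6]{GL25}. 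This gives $\|u_2-u_1\|_\infty\le C\|f_1-f_2\|_{L^1}^{\gamma}$.

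If you prefer to use only fixed-measure stability, argue as follows. Solve $(P_{\delta,t}+\ddc\phi)^n=e^{u_2+c}f_1\eta_t^n$, where $c\in[-C\|f_1-f_2\|_{L^1},0]$ is chosen to fix the mass. Fixed-measure stability shows that $\phi$ is $C\|f_1-f_2\|_{L^1}^{\gamma}$-close to $u_2$ up to an additive constant. A suitable downward shift of $\phi$ is then a subsolution of the $f_1$-equation, which bounds $u_2-u_1$ from above.

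Finally, uniform convergence on $q^{-1}(\overline{\mathbb D})$ needs more than a uniform stability constant. You also need $\|f_{a_1,a_2,t}-f_{0,a_2,t}\|_{L^1}$ and $\|f_{0,a_2,t}-f_t\|_{L^1}$ to tend to zero uniformly in $t\in\overline{\mathbb D}$. This follows from the common integrable majorant in relative SNC charts, as in Step~1 of the paper.
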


\begin{proof}
\emph{Step 1: Smooth solutions and the densities.}
If all three regularization parameters are positive, the background
form is K\"ahler and the density is smooth and positive. Therefore
the exponential Monge--Amp\`ere equation has a unique smooth solution
\cite{Aubin,Yau}. Moving the exponential to the left-hand side, we
obtain a linearized operator with a negative zeroth-order term. By the
maximum principle and elliptic theory this operator is invertible,
and hence, by the implicit function theorem, the solution depends
smoothly on $t$.

Fix $p>1$ with $pc_j<1$ for every $j$. In a relative SNC chart we
choose vertical coordinates $z=(z_1,\ldots,z_n)$ such that, after
relabeling the pole components meeting the chart, they are given by
$z_j=0$. The pole factor is bounded by a fixed multiple of
$\prod_j|z_j|^{-2c_j}$, and the zero factors are bounded. Covering the
family by finitely many charts, we obtain
\[
 \|f_{a_1,a_2,t}\|_{L^p(\eta_t^n)}\le C,
 \qquad
 0<c\le\int_{Z_t}f_{a_1,a_2,t}\eta_t^n\le C.
\]
For the lower bound we use $f_{a_1,a_2}\ge f_{0,1}$; the fiber
integrals of $f_{0,1}$ are positive and continuous on
$\overline{\mathbb D}$.

We shall also need the convergence of the densities uniformly in the
parameter $t$. With $a_2>0$ fixed, $f_{a_1,a_2,t}\to f_{0,a_2,t}$ in
$L^1(Z_t,\eta_t^n)$ as $a_1\downarrow0$, and then
$f_{0,a_2,t}\to f_t$ in the same space as $a_2\downarrow0$.
Both $L^1$ errors tend to zero uniformly for $t\in\overline{\mathbb D}$.
In fact, in relative SNC charts the common integrable bound makes the
contribution near the pole divisors uniformly small, and outside a
fixed neighborhood of these divisors the densities converge
uniformly on compact sets.

\medskip\noindent
\emph{Step 2: Uniform bounds.}
For fixed $\delta,a_1,a_2,t$ we put
$w:=u_{\delta,a_1,a_2,t}-\sup_{Z_t}u_{\delta,a_1,a_2,t}$, so that
$\sup_{Z_t}w=0$. Since the forms $P_{\delta,t}$ are bounded by a fixed
multiple of $\eta_t$, the uniform Skoda integrability gives a bound
for $\int_{Z_t}|w|^\ell\eta_t^n$ for every real $\ell\ge1$. Hence, by
H\"older's inequality and the $L^p$ bound in Step~1,
$\int_{Z_t}|w|f_{a_1,a_2,t}\eta_t^n$ is bounded. We apply Jensen's
inequality to the probability measure obtained by dividing
$f_{a_1,a_2,t}\eta_t^n$ by its total mass. Combined with the bounds
for the mass in Step~1 and with $w\le0$, this gives
\[
 0<c\le J:=\int_{Z_t}e^w f_{a_1,a_2,t}\eta_t^n\le C.
\]
The volumes $V_\delta=\int_{Z_t}P_{\delta,t}^n$ are independent of
$t$, since the forms are closed, and they are bounded from above and
away from zero uniformly in $\delta$, since $P_t$ is semipositive and
big. Integrating the equation, we have
$J\exp\bigl(\sup_{Z_t}u_{\delta,a_1,a_2,t}\bigr)=V_\delta$, and hence
$\bigl|\sup_{Z_t}u_{\delta,a_1,a_2,t}\bigr|$ is uniformly bounded.
After division by $V_\delta$, the equation for $w$ has the
probability density $e^w f_{a_1,a_2,t}/J$, whose $L^p$ norm is
uniformly bounded. Therefore $\|w\|_\infty$ is bounded by the uniform
Monge--Amp\`ere estimate used in \cite[Sec.~5]{LPWZ}, or by
\cite[Theorem~1.9]{DGG} with $V_{P_{\delta,t}}=0$. This proves
\eqref{eq:model-regularization-bound}.

\medskip\noindent
\emph{Step 3: Removing the density regularizations.}
We now derive the uniform convergence of the solutions from the
$L^1$ convergence of the densities. Let $f_1\ge f_2$ be two
regularized densities, with solutions $u_1,u_2$ in
$\PSH(Z_t,P_{\delta,t})$. By the comparison principle, $u_1\le u_2$.
We define the probability measure
$d\mu_1:=V_\delta^{-1}e^{u_1}f_1\eta_t^n$. By the inequality
$x\le e^x-1$ for $x\ge0$, we have
\begin{align*}
 \int_{Z_t}(u_2-u_1)\,d\mu_1
 &\le V_\delta^{-1}\int_{Z_t}(e^{u_2}-e^{u_1})f_1\eta_t^n\\
 &=V_\delta^{-1}\int_{Z_t}e^{u_2}(f_1-f_2)\eta_t^n
 \le C\|f_1-f_2\|_{L^1}.
\end{align*}
In the equality we have used that both Monge--Amp\`ere measures have
total mass $V_\delta$. By the uniform Skoda integrability and the
$L^p$ bound, the finite moments required in the stability theorem
\cite[Theorem~2.6]{GL25}, recalled in \cite[Theorem~5.1]{LPWZ}, are
bounded. In order to apply that theorem, we subtract $\sup u_1$ from
both potentials. This normalizes the solution without changing the
difference, and the comparison function remains uniformly bounded by
Step~2. The stability estimate then gives, for uniform
constants $C,\gamma_{\mathrm{st}}>0$,
\[
 \|u_2-u_1\|_\infty
       \le C\|f_1-f_2\|_{L^1}^{\gamma_{\mathrm{st}}}.
\]
As $a_1\downarrow0$, the densities decrease and the solutions
increase. After taking this limit, we let $a_2\downarrow0$; then the
densities increase and the solutions decrease. In each step the
uniform $L^1$ convergence in Step~1 and the stability estimate give
uniform convergence on $q^{-1}(\overline{\mathbb D})$. Hence the
resulting function $u_\delta$ is continuous. By the continuity
theorem of Bedford--Taylor for bounded potentials, its restrictions
to the fibers satisfy the equation with density $f$.

\medskip\noindent
\emph{Step 4: Removing the background perturbation.}
For $\delta'>\delta$ the background form increases by the positive
form $(\delta'-\delta)\eta$. Hence, on each fiber,
\[
 (P_{\delta',t}+\ddc u_{\delta,t})^n
 \ge e^{u_{\delta,t}}f_t\eta_t^n.
\]
Thus $u_{\delta,t}$ is a subsolution of the equation satisfied by
$u_{\delta',t}$, and the comparison principle gives
$u_\delta\le u_{\delta'}$. In particular, $u_\delta$ decreases as
$\delta\downarrow0$. By the uniform bound
\eqref{eq:model-regularization-bound}, the limit $u$ is bounded.
Since each $u_\delta$ is continuous, this decreasing limit is upper
semicontinuous.

In order to pass to the limit in the equation, we choose smooth local
potentials $A,R$ with $\ddc A=P$ and $\ddc R=\eta$. Adding a constant
to $R$ on a smaller chart, we may assume that $R\ge0$. The functions
\[
 v_\delta:=u_\delta+A+\delta R
\]
are psh on each fiber and decrease to $u+A$, since both $u_\delta$
and $\delta R$ decrease. They are locally uniformly bounded. Hence,
by the convergence theorem of Bedford--Taylor \cite[Theorem~2.1]{BT82},
$(P_{\delta,t}+\ddc u_{\delta,t})^n$ converges weakly to
$(P_t+\ddc u_t)^n$. On the right-hand side we can apply the dominated
convergence theorem, because of the bound
$0\le e^{u_{\delta,t}}f_t\le e^{C_{\mathbb D}}f_t$ and the
integrability of $f_t$. Thus each $u_t$ satisfies
\eqref{eq:analytic-model-equation}. The comparison principle for this
exponential equation gives the uniqueness among bounded $P_t$-psh
solutions.
\end{proof}

\subsection{Perron envelopes and a local curvature inequality}

The above construction gives positivity in the fiber directions. In
order to prove positivity in the directions involving the base
variable, we compare the solutions with a Perron envelope which is
psh, relative to the background form, on the total space. In the next
three lemmas we fix $0<\delta,a_1,a_2\le1$, and we write
$\mathbb D=\{|t-t_*|<r_*\}$.

Let $\mathcal F$ be the family of all functions
$v\in\PSH(q^{-1}(\mathbb D),P_\delta)$ whose boundary values do not
exceed the given solutions on the fibers, in the sense that
$
 \limsup_{q^{-1}(\mathbb D)\ni x\to\xi}v(x)
 \le u_{\delta,a_1,a_2}(\xi)
$ for every $\xi\in q^{-1}(\partial \mathbb D).$
The Perron envelope is defined by
$
 U=U_{\delta,a_1,a_2}:=
       \left(\sup_{v\in\mathcal F}v\right)^*.
$
By construction $U$ is jointly $P_\delta$-psh. The following
regularity argument shows that $U$ attains the prescribed boundary
values, that is, $U=u_{\delta,a_1,a_2}$ on $q^{-1}(\partial \mathbb D)$.

In order to apply the regularity theorem, we first construct a smooth
strict subsolution with the prescribed boundary values. The vertical
block of $P_\delta+\ddc u_{\delta,a_1,a_2}$ is strictly positive. On
the compact closure of $q^{-1}(\mathbb D)$ its smallest eigenvalue is
bounded from below by a positive constant, and the other coefficients
are bounded. Hence, if we add a large positive term in the base
direction, the whole form becomes strictly positive. More precisely,
by the Schur complement criterion,
\[
 \underline U=u_{\delta,a_1,a_2}
                 +\Lambda\bigl(|t-t_*|^2-r_*^2\bigr)
\]
satisfies $P_\delta+\ddc\underline U>0$ for sufficiently large
$\Lambda$. The added term vanishes on $q^{-1}(\partial \mathbb D)$, so
that $\underline U$ has the required boundary values. Moreover, this
boundary is Levi-flat and satisfies the boundary hypothesis of
\cite[Theorem~1.1]{CTW18}. Put $T:=P_\delta+\ddc U$ on the total
space and $T_t:=P_{\delta,t}+\ddc U_t$ on each fiber. By the
regularity theorem for the Perron problem, we have
\[
 U\in C^{1,1},\qquad T\ge0,
 \qquad T^{n+1}=0,
 \qquad U|_{q^{-1}(\partial \mathbb D)}=u_{\delta,a_1,a_2}.
\]
In particular, the coefficients of $T$ are locally bounded.
The constant $\Lambda$, and hence the $C^{1,1}$ norm of $U$, may
depend on the three regularization parameters. We shall use uniformly
only the bound for the supremum norm of $U$:
\begin{equation}\label{eq:signed-perron-uniform-bound}
 |U|\le C'_{\mathbb D}.
\end{equation}
This is the bound for Perron envelopes in \cite[Lemma~5.6]{LPWZ}.
Since it uses only the uniform bound for the boundary values and the
fixed forms $P,\eta$, it applies without change to the present
regularized boundary data.

The following local inequality is an extension of
\cite[Lemma~6.1]{LPWZ} to $D=E-C$. The pole divisor $C$ contributes
with a favorable sign. However, it may make the auxiliary psh
function unbounded from below, and we therefore justify the
calculation with currents by truncating that function.

\begin{lemma}
\label{lem:signed-local-current-details}
We work in product coordinates $(z,t)$, with
$z=(z_1,\ldots,z_n)$ along the fibers. Choose a smooth local potential
$a_\delta$ such that $\ddc a_\delta=P_\delta$, and put
\[
 v:=a_\delta+U,\qquad T=\ddc v,\qquad S:=T^n.
\]
Thus $v$ is a locally bounded psh function, and $S$ is an $(n,n)$
current on the $(n+1)$-dimensional total space. Let $d\lambda_z$ be
the Euclidean volume form in $z$, and define the smooth positive
function $\rho$ by $\eta_t^n=\rho(z,t)\,d\lambda_z$.
On a fixed vertical ball $\Omega$ slightly larger than the region
under consideration, we define, for each integer $k\ge1$, the
diagonal weighted Bergman kernel by
\[
 K_{kv}(z,t):=
 \sup\left\{|g(z)|^2:
   g\in\mathcal O(\Omega),\quad
   \int_\Omega |g(\zeta)|^2e^{-kv(\zeta,t)}\,d\lambda_\zeta\le1
       \right\},
\]
where $\mathcal O(\Omega)$ denotes the space of holomorphic functions
on $\Omega$. With our conventions for $\ddc$ and $d\lambda_z$ we put
$c_n:=n!$; with this normalization
$c_nk^{-n}K_{kv}e^{-kv}\,d\lambda_z$ approximates $T_t^n$.
Using the unregularized density $f$, we put, outside $\Supp D$,
\[
 F_k:=\log\frac{c_nk^{-n}K_{kv}e^{-kv}}{e^Uf\rho},
 \qquad F_k^+:=\max\{F_k,0\}.
\]
Here and below, $r^+=\max\{r,0\}$ and $\log^\pm x=\max\{\pm\log x,0\}$.
Then $F_k^+S$ is locally integrable, the current $[E]\wedge S$ is well
defined, and
\begin{equation}\label{eq:signed-entropy-current}
 \ddc(F_k^+S)\ge-[E]\wedge S.
\end{equation}
\end{lemma}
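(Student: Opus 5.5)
The plan is to decompose $F_k$ into a psh part, a pluriharmonic/smooth part, and the divisorial contributions, and then use the elementary fact that $\ddc(\max\{\Phi,0\})\ge0$ for a psh function $\Phi$, together with $\ddc S=0$ in the appropriate sense. Outside $\Supp D$ we write
\[
 F_k=\log K_{kv}-kv-U-\log\rho+h-\sum_i d_i\log|s_{E_i}|^2+\sum_j c_j\log|s_{C_j}|^2+\log(c_nk^{-n}).
\]
By Berndtsson's positivity of weighted Bergman kernels \cite[Theorem~1.1]{Ber06}, $\log K_{kv}$ is psh in $(z,t)$, since $kv$ is psh on the product $\Omega\times\{t\}$-family. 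The terms $-kv-U$ combine as $-(k+1)v+a_\delta$, a function with $\ddc=-(k+1)T+P_\delta$. Since $S=T^n$ and $T^{n+1}=0$, we have $T\wedge S=0$, and the smooth terms $a_\delta$, $\log\rho$, $h$ and the metric parts of $\log|s|^2$ give smooth forms whose wedge with $S$ must be handled. I would use the identity \eqref{eq:analytic-model-identity}, which says that $\theta_\eta+b-P-\Theta_D=\ddc h$, with $\ddc\log\rho$ being essentially $-\theta_\eta$ in the vertical directions. After this cancellation, the smooth remainder is $b+\delta\eta$-type positive, up to terms of the form $\ddc(\text{base-only function})$ and $T$, which vanish against $S$. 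So formally
\[
 \ddc F_k\wedge S\ \ge\ -[E]\wedge S+\sum_j c_j[C_j]\wedge S\ \ge\ -[E]\wedge S,
\]
the poles giving the favorable sign. Here $\ddc\log|s_{E_i}|^2=[E_i]-\Theta(H_i)$.

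The next step is to justify the passage from $F_k$ to $F_k^+$ and the currents. Since $U\in C^{1,1}$, $S$ has locally bounded coefficients, so $[E]\wedge S$ is well defined as the restriction of an $L^\infty$ form to the smooth hypersurfaces $E_i$. For the truncation, I would write $F_k=\Phi-\Psi$, where
\[
 \Phi:=\log K_{kv}+\sum_j c_j\log|s_{C_j}|^2+(\text{smooth psh correction}),\qquad \Psi:=\sum_i d_i\log|s_{E_i}|^2+\dots,
\]
and regularize. I would replace $\log|s_{E_i}|^2$ by $\log(|s_{E_i}|^2+\epsilon)$, which is smooth, and truncate the pole term by $\max\{\sum_jc_j\log|s_{C_j}|^2,-N\}$, which is psh modulo smooth. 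For the resulting functions $F_{k,\epsilon,N}$, the function $\max\{F_{k,\epsilon,N},0\}$ is a maximum of quasi-psh functions with bounded below $\ddc$ against $S$. The inequality $\ddc\max\{w,0\}\wedge S\ge \mathbf 1_{\{w>0\}}\ddc w\wedge S$ holds, or more simply $\ge\min(\ddc w,0)$-type bounds, because $S$ is closed positive with bounded coefficients and Bedford--Taylor calculus applies to bounded quasi-psh functions. The term $\ddc\log(|s_{E_i}|^2+\epsilon)$ converges to $[E_i]-\Theta(H_i)$, which gives $-[E]\wedge S$ in the limit. Local integrability of $F_k^+S$ follows because $F_k^+\le(\log K_{kv})^++C+\sum_j c_j\log^-|s_{C_j}|^{-2}$-type terms. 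These are locally integrable: $\log K_{kv}$ is locally bounded above since $v$ is bounded, the poles give only $\log$-singularities, and $S$ has bounded coefficients. We then let $N\to\infty$ and $\epsilon\to0$ by monotone and dominated convergence in the weak sense.

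The main obstacle I expect is the exact cancellation of the smooth terms against $S$. Specifically, one must show that $\ddc(-\log\rho+h)-\Theta_D-P_\delta$ plus the $-(k+1)T$ contribution leaves, after wedging with $S=T^n$, something $\ge0$. The difficulty is that $\rho$ is defined through coordinate volume forms, and $\ddc\log\rho$ equals $-\theta_\eta$ only up to mixed terms. I would first try to verify that $\ddc(\log\rho)=-\theta_\eta$ holds as forms on the total space in product coordinates, by the definition of the metric on $K_{Z/\Delta^\circ}$ induced by $\eta_t^n$. That is exactly why the statement insists on the \emph{same} volume forms. Once this is checked, the remainder reduces to $b+\delta\eta-(k+1)T+\ddc\log K_{kv}\ge -(k+1)T$ modulo positive terms, and $T\wedge S=0$ finishes the argument.
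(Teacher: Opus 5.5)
Your formal computation of $\ddc F_k\wedge S$ uses the right inputs: Berndtsson's positivity, $T^{n+1}=0$, the model identity, and the favorable sign of $[C]$. But the lemma concerns $\ddc(F_k^+S)$, and the passage from $F_k$ to its positive part is the step your proposal does not carry out.

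\textbf{The passage to $F_k^+$.} The fact $\ddc\max\{\Phi,0\}\ge0$ requires $\Phi$ psh, and $F_k$ is not: it contains $-(k+1)v$ and $-\sum_id_i\log|s_{E_i}|^2$. Your substitute $\ddc\max\{w,0\}\wedge S\ge\mathbf 1_{\{w>0\}}\ddc w\wedge S$ is a Demailly-type inequality. It is true here, but it needs its own proof via plurifine locality for $S=(\ddc v)^n$. Even granting it, you may drop the indicator only if the term you discard is a positive current. With your metric regularizations $\log(|s_{E_i}|^2+\epsilon)$ and $\max\{\sum_jc_j\log|s_{C_j}|^2,-N\}$ it is not, because the curvatures $\Theta(H_\ell)$ have no sign. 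You would have to track error terms, and ``$\min(\ddc w,0)$-type bounds'' has no meaning for currents.

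\textbf{The missing idea.} Use local equations $g_i$ of $E_i$ and absorb the positive part into a maximum of two psh functions. Put $\ell_E=\sum_id_i\log|g_i|^2$ and $\Xi_E=a_\delta+\ell_E-\log(f\rho)$. The model identity gives $\ddc\Xi_E=b+\delta\eta+[C]\ge0$, so $\Xi_E$ is psh and the poles are hidden inside it. Since $-kv-U=-(k+1)v+a_\delta$, we have $F_k=\log(c_nk^{-n}K_{kv})+\Xi_E-\ell_E-(k+1)v$, and therefore
\[
 F_k^+=W_k-(k+1)v-\ell_E,\qquad
 W_k=\max\bigl\{\log(c_nk^{-n}K_{kv})+\Xi_E,\;(k+1)v+\ell_E\bigr\},
\]
with $W_k$ psh. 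The inequality then follows from three facts:
\begin{itemize}
\item $\ddc(W_kS)\ge0$, via the truncations $\max\{W_k,-N\}$ and dominated convergence;
\item $\ddc(vS)=T^{n+1}=0$;
\item $\ddc(\ell_ES)=[E]\wedge S$.
\end{itemize}
No lower bound for $\ddc F_k$ on $\{F_k>0\}$ is ever needed.

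\textbf{Other claims that are wrong as stated.}
\begin{itemize}
\item Your majorant for $F_k^+$ puts the singularity on the wrong divisor. The pole terms $+\sum_jc_j\log|s_{C_j}|^2$ only decrease $F_k$, whereas $-\sum_id_i\log|s_{E_i}|^2\to+\infty$ along $E$. The correct local bound is $F_k^+\le C+\sum_id_i\log^-|g_i|^2$. Local integrability of $F_k^+S$ still follows, since $S$ has bounded coefficients (alternatively, from $W_k\ge(k+1)v+\ell_E$).
\item The current $[E]\wedge S$ cannot be defined by restricting an $L^\infty$ form to a hypersurface, which is a Lebesgue null set. One restricts the bounded potential $v$ to the smooth components $E_i$ and takes the Bedford--Taylor product there. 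The identity $\ddc(\ell_ES)=[E]\wedge S$ must then be proved by smoothing $v$ and using the continuity theorem. This is exactly the convergence $\ddc\log(|g_i|^2+\epsilon)\wedge S\to[E_i]\wedge S$ that you take for granted.
\item The ``main obstacle'' you single out is immediate, but your sign is reversed. The form $dz_1\wedge\cdots\wedge dz_n$ is a local frame of $K_{Z/\Delta^\circ}$ with squared norm proportional to $1/\rho$, so $\theta_\eta=\ddc\log\rho$ on the whole chart. With $\ddc\log\rho=-\theta_\eta$, the cancellation against the model identity would not occur.
\end{itemize}
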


\begin{proof}
Let $g_i$ be a local holomorphic equation of $E_i$, and put
$\ell_E:=\sum_i d_i\log|g_i|^2$. Then $\ddc\ell_E=[E]$ by the
Poincar\'e--Lelong formula. By \eqref{eq:analytic-model-identity},
where the canonical curvature is computed from $\rho$, the function
\[
 \Xi_E=a_\delta+\ell_E-\log(f\rho)\quad\text{satisfies}\quad
 \ddc\Xi_E=b+\delta\eta+[C]\ge0;
\]
hence $\Xi_E$ extends as a psh function across the divisors. By the
positivity of Bergman kernels, the first entry in the following
maximum is psh; the second entry is psh, since $v$ and $\ell_E$ are
psh. Consequently
\[
 W_k=\max\{\log(c_nk^{-n}K_{kv})+\Xi_E,
                          (k+1)v+\ell_E\}
\]
is psh. Outside the divisors, and hence as locally integrable
functions, we have
\[
 F_k^+=W_k-(k+1)v-\ell_E.
\]

Before applying $\ddc$ to this identity, we must justify the product
$W_kS$. The psh function $W_k$ is locally bounded from above. Since
$W_k\ge(k+1)v+\ell_E$ and $v$ is locally bounded, the negative part
of $W_k$ is controlled by integrable logarithms. For fixed
regularization parameters the coefficients of $S$ are bounded, so
that $W_kS$ has locally integrable coefficients. For positive
integers $N$, the dominated convergence theorem gives
$\max(W_k,-N)S\to W_kS$ in $L^1_{\mathrm{loc}}$, coefficientwise in
local coordinates. Each truncation is locally bounded and psh.
Hence, by the Bedford--Taylor theory,
$\ddc(\max(W_k,-N)S)\ge0$. Passing to the limit in the sense of
distributions, we obtain $\ddc(W_kS)\ge0$.

The product $[E]\wedge S$ is defined by restricting the locally
bounded potential $v$ to the smooth components of $E$. Smoothing $v$
locally and using the continuity theorem of Bedford--Taylor for
bounded functions, we get
\[
 \ddc(\ell_ES)=[E]\wedge S.
\]
Similarly, $\ddc(vS)=T^{n+1}=0$. Applying $\ddc$ to the identity for
$F_k^+$ now gives \eqref{eq:signed-entropy-current}. Only the zero
divisor $E$ contributes a negative term; the pole divisor $C$ enters
$\ddc\Xi_E$ with a positive sign.
\end{proof}

\subsection{A fiberwise estimate from Green's formula}

Next we integrate \eqref{eq:signed-entropy-current} over the fibers
and apply Green's formula on the base. The continuity of the
resulting fiber integrals makes the estimate valid at every
parameter. We follow \cite[Lemma~6.4]{LPWZ}, but we retain the
positive part of the logarithm in the boundary term, since in the
presence of poles $f_{a_1,a_2}/f$ can be less than one.

\begin{lemma}
\label{lem:signed-green-details}
Fix $\tau\in \mathbb D$, let $\nu_\tau$ be the harmonic measure of
$\mathbb D$ with respect to $\tau$, that is, the probability measure on
$\partial\mathbb D$ with $H(\tau)=\int_{\partial\mathbb D}H\,d\nu_\tau$
for every function $H$ continuous on $\overline{\mathbb D}$ and
harmonic in $\mathbb D$, and let $g_\tau\ge0$ be the Green function
of $\mathbb D$ with pole at $\tau$, extended by zero outside
$\mathbb D$ and normalized by $\ddc g_\tau=\nu_\tau-\delta_\tau$,
where $\delta_\tau$ is the Dirac mass at $\tau$. A quotient of
measures below means the quotient of their local densities, and its
positive logarithm is set to zero where the numerator vanishes. Then
\begin{align}
 \int_{Z_\tau}
  \log^+\frac{T_\tau^n}{e^{U_\tau}f_\tau\eta_\tau^n}\,T_\tau^n
 &\le\int_{\partial \mathbb D}\int_{Z_t}
      e^{u_{\delta,a_1,a_2,t}}f_{a_1,a_2,t}
       \log^+\frac{f_{a_1,a_2,t}}{f_t}
        \eta_t^n\,d\nu_\tau(t)\notag\\
 &\quad+\int_{q^{-1}(\mathbb D)}q^*g_\tau\,T^n\wedge[E].
 \label{eq:signed-green-estimate}
\end{align}
\end{lemma}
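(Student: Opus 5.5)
We integrate the current inequality \eqref{eq:signed-entropy-current} of Lemma~\ref{lem:signed-local-current-details} against $q^*g_\tau$ and use Green's formula on the base. Fix $k$. Using a partition of unity subordinate to product charts, the local currents $F_k^+S$ patch to a global positive $(n,n)$ current on $q^{-1}(\mathbb D)$, up to errors from the choice of $\Omega$ that vanish as $k\to\infty$. Its fiber integral
\[
 \Phi_k(t):=\int_{Z_t}F_k^+\,T_t^n
\]
is a function on $\mathbb D$. By \eqref{eq:signed-entropy-current} it satisfies, in the sense of distributions,
\[
 \ddc\Phi_k\ge-q_*([E]\wedge T^n).
\]
We plan to apply the Riesz/Green representation on $\mathbb D$:
\[
 \Phi_k(\tau)\le\int_{\partial\mathbb D}\Phi_k\,d\nu_\tau+\int_{\mathbb D}g_\tau\,q_*([E]\wedge T^n).
\]
Unwinding $q_*$, the second term is exactly $\int q^*g_\tau\,T^n\wedge[E]$. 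To justify the formula for a function that is only a difference of a subharmonic function and a potential, we first smooth $\Phi_k$ by convolution on slightly smaller disks. We then pass to the limit using upper semicontinuity. Here $F_k^+$ is an upper semicontinuous expression built from psh functions (the $W_k$), and the bounded coefficients of $T$ allow dominated convergence in the fiber integrals.

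Next we let $k\to\infty$. On interior fibers, the Bergman kernel asymptotics for bounded psh weights \cite[Theorem~2.1, Corollary~2.2]{BB17} give
\[
 c_nk^{-n}K_{kv}e^{-kv}\,d\lambda_z\to T_\tau^n
\]
in $L^1$, as densities, since $U_\tau\in C^{1,1}$. Thus $F_k\to\log\bigl(T_\tau^n/(e^{U_\tau}f_\tau\eta_\tau^n)\bigr)$. Fatou's lemma on the left, applied along a subsequence converging almost everywhere, gives the left-hand side of \eqref{eq:signed-green-estimate} with $\liminf$. On $\partial\mathbb D$ we need an upper bound, so we need convergence rather than Fatou. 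On the boundary fibers $U=u_{\delta,a_1,a_2}$, which is smooth, and there the Monge--Amp\`ere measure $T_t^n$ at least dominates in the fiber directions the fiberwise measure, with equality in the sense that $T_t=P_{\delta,t}+\ddc u_t$ as fiber forms. The fiberwise equation gives $T_t^n=e^{u_t}f_{a_1,a_2,t}\eta_t^n$. The ratio is therefore exactly $f_{a_1,a_2,t}/f_t$, which yields the stated boundary integrand $e^{u}f_{a_1,a_2}\log^+(f_{a_1,a_2}/f)$. Uniform convergence of the Bergman quotients on the smooth boundary fibers, using the smooth-weight case of \cite{BB17}, and continuity of $\Phi_k$ up to $\partial\mathbb D$ let us pass to the limit there.

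\textbf{Main obstacle.} The delicate point is the boundary term together with the continuity in $t$ of the fiber integrals near $\partial\mathbb D$. Poles make $\log^+$ of $1/f$ unbounded. We handle this by noting $\log^+(f_{a_1,a_2}/f)\lesssim\sum_j c_j\log^-|s_{C_j}|^2$ plus bounded terms, which is integrable against the $L^p$-bounded densities. For the interior evaluation at every $\tau$, rather than almost every $\tau$, we use that the right-hand side is continuous in $\tau$ and the left-hand side is lower semicontinuous, by Fatou and weak continuity of $T_t^n$ in $t$ coming from $C^{1,1}$ regularity.
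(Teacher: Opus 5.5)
Your outline matches the paper's: integrate \eqref{eq:signed-entropy-current} over the fibers, apply Green's formula with $g_\tau$, let $k\to\infty$ using the Bergman asymptotics, and identify $T_t^n=e^{u_{\delta,a_1,a_2,t}}f_{a_1,a_2,t}\eta_t^n$ on $\partial\mathbb D$. The gap is the step that makes Green's formula hold at the prescribed point $\tau$.

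From $\ddc\Phi_k\ge-\mu$ in the sense of distributions you only get the Green inequality for the subharmonic-plus-potential representative, which agrees with $\Phi_k$ almost everywhere. To transfer it to the actual value $\Phi_k(\tau)$ you need $\Phi_k(\tau)\le\liminf_{\epsilon\to0}\Phi_{k,\epsilon}(\tau)$ for the mollified functions, i.e.\ lower semicontinuity at $\tau$. Upper semicontinuity, which you invoke, gives the opposite inequality and only helps with the boundary term. Even there it is not automatic: $F_k^+$ is unbounded near $E$, so semicontinuity of the integrand does not pass to the fiber integrals without a uniform integrability argument.

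Your fallback is to extend an almost-everywhere inequality using lower semicontinuity of $\tau\mapsto\int_{Z_\tau}F^+T_\tau^n$, ``by Fatou and weak continuity of $T_t^n$''. This fails as stated. Fatou's lemma needs pointwise convergence of the integrands, and $T_t^n\rightharpoonup T_\tau^n$ gives no pointwise convergence of the densities; $C^{1,1}$ regularity only bounds them. You would need a separate convexity/duality argument for $x\mapsto x\log^+x$. The almost-everywhere inequality itself also still needs the patching error controlled.

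The paper avoids all of this by proving that, for fixed $k$, $H_k(t)=\sum_j\int_{Z_t}\chi_jF^+_{j,k}T_t^n$ is \emph{continuous} on $\overline{\mathbb D}$. Three ingredients give this:
\begin{itemize}
\item the extremal characterization yields $e^{-kr}K_{kv_j}(\cdot,s)\le K_{kv_j}(\cdot,t)\le e^{kr}K_{kv_j}(\cdot,s)$ whenever $\|v_j(\cdot,t)-v_j(\cdot,s)\|_\infty\le r$;
\item Bedford--Taylor gives weak continuity of $T_t^n$;
\item a cutoff near $E$ is controlled by an integrable majorant independent of $t$.
\end{itemize}
You assert continuity of $\Phi_k$ up to $\partial\mathbb D$ but never prove it.

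Two further points need repair.

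First, the local Bergman quotients depend on the chart, so $\ddc\Phi_k\ge-q_*([E]\wedge T^n)$ is not exact. The patching error (the paper's $\mathcal E_k(\varphi)$, from the terms $\widehat\varphi\,\ddc\chi_j$ and $d\widehat\varphi\wedge d^c\chi_j$) is first order in the test function. It is therefore not a measure, and the ``subharmonic minus potential'' framework does not apply directly. You have to:
\begin{itemize}
\item test against $\varphi_\ell\in C^\infty_c(\mathbb D)$ with $\varphi_\ell\to g_\tau$ in $W^{1,1}(\mathbb D)$ and $\ddc\varphi_\ell\rightharpoonup\nu_\tau-\delta_\tau$;
\item bound $|\mathcal E_k(\varphi_\ell)-\mathcal E_k(g_\tau)|$ by $\|\varphi_\ell-g_\tau\|_{W^{1,1}}$;
\item show $\mathcal E_k(g_\tau)\to0$ from the $L^1$ convergence of all local quotients to the common limit $F^+$, with weight $1+g_\tau+|dg_\tau|$.
\end{itemize}

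Second, your majorant has the divisors reversed. Every pole factor of $f_{a_1,a_2}/f$ is at most one, and poles of $f$ only decrease $\log^+$. It is the zero divisor $E$ that makes both $F_k^+$ and $\log^+(f_{a_1,a_2}/f)$ unbounded. The correct majorant is $C+\sum_i d_i\log^-|s_{E_i}|^2$; your bound by $\sum_jc_j\log^-|s_{C_j}|^2$ plus bounded terms fails near $E$.
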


\begin{proof}
\emph{Step 1: Bergman approximation.}
We choose finitely many relative SNC product charts with nested
vertical balls as in the local theorem on Bergman kernels. Let
$(\chi_j)$ be a smooth nonnegative partition of unity subordinate to
the smaller charts, so that $\sum_j\chi_j=1$ near
$q^{-1}(\overline{\mathbb D})$. In the chart $j$, let $a_{\delta,j}$
be the local potential of $P_\delta$, and put
$v_j:=a_{\delta,j}+U$. We define the Bergman density $b_{j,k}$ and the
two measure densities $r_j,\varrho_j$ by
\[
 b_{j,k}:=c_nk^{-n}K_{kv_j}e^{-kv_j},\qquad
 T_t^n=r_j\,d\lambda_z,
 \qquad e^{U_t}f_t\eta_t^n=\varrho_j\,d\lambda_z.
\]
Thus $\varrho_j=e^{U_t}f_t\rho_j$, where
$\eta_t^n=\rho_j\,d\lambda_z$ in this chart. In what follows, $L^1$
norms in coordinates are taken with respect to $d\lambda_z$ on the
smaller vertical ball. By the asymptotics of Bergman kernels, we
have, for every fixed $t$,
\[
 \|b_{j,k}(\cdot,t)-r_j(\cdot,t)\|_{L^1}\longrightarrow0,
 \qquad 0\le b_{j,k},r_j\le C_{\delta,a_1,a_2}.
\]
The ratio $r_j/\varrho_j$ does not depend on the chart; it therefore
defines a global function $F^+:=\log^+(r_j/\varrho_j)$, which we set
equal to zero where $r_j=0$. Its local Bergman approximation is
$F_{j,k}^+:=\log^+(b_{j,k}/\varrho_j)$. Away from the divisor,
$\varrho_j$ is bounded from below on compact sets; hence the $L^1$
convergence of $b_{j,k}$ implies that of $F_{j,k}^+$ to $F^+$.
Near the divisor, both functions have the majorant
\[
 0\le F_{j,k}^+,F^+
 \le C_{\delta,a_1,a_2}
                +\sum_{i:E_i\text{ meets chart }j}
                     d_i\log^-|z_{\ell(i)}|^2=:M_j(z);
\]
where $z_{\ell(i)}=0$ defines $E_i$ in the SNC chart. Only the zero
components occur, since poles of $f$ decrease the positive logarithm.
The majorant is integrable in the vertical coordinates, uniformly in
$t$. Its integral over a small neighborhood of the divisor is
uniformly small, and therefore the convergence away from the divisor
extends to $L^1$ convergence on the whole smaller vertical ball on
each fiber. Since the densities $r_j$ are bounded, we also obtain
convergence with respect to $T_t^n$.

We shall need the same convergence after integration in $t$. The
function $1+g_\tau+|dg_\tau|$ is integrable with respect to the
Euclidean area on the base, where $|dg_\tau|$ denotes the Euclidean
norm. Hence, by dominated convergence, we obtain convergence on the
total space with each of these weights. The same argument gives
convergence after integration against $\nu_\tau$ on
$\partial \mathbb D$.

\medskip\noindent
\emph{Step 2: Continuity in the parameter.}
Put
\[
 H_k(t)=\sum_j\int_{Z_t}\chi_jF_{j,k}^+T_t^n.
\]
Since Green's formula involves the value $H_k(\tau)$, we need the
continuity of $H_k$, and not only a representative defined almost
everywhere. Fix $k$. If $t,s$ belong to the same base chart and
$\|v_j(\cdot,t)-v_j(\cdot,s)\|_\infty\le r$ for a number $r\ge0$,
then the extremal characterization of the Bergman kernel gives
\[
 e^{-kr}K_{kv_j}(z,s)\le K_{kv_j}(z,t)
                         \le e^{kr}K_{kv_j}(z,s),
\]
so that the kernel depends continuously on the parameter.

To prove the continuity of the fiber integrals, we work in a fixed
product chart and let $t\to s$. Since $v_j$ is continuous in both
variables, $v_j(\cdot,t)\to v_j(\cdot,s)$ locally uniformly. Hence,
by the continuity theorem of Bedford--Taylor, $T_t^n$ converges
weakly to $T_s^n$ in these coordinates.

In order to deal with the singular integrand, we choose a small
neighborhood $N$ of the divisor in the vertical coordinates and
insert a smooth cutoff function which vanishes near the divisor and
is equal to one outside $N$. The resulting integrand is continuous in
$(z,t)$, and hence its fiber integral depends continuously on the
parameter by the weak convergence just proved. The part omitted by
this cutoff is bounded by
$
 C\int_N M_j(z)\,d\lambda_z,
$
since $F_{j,k}^+\le M_j$ and the densities of $T_t^n$ are uniformly
bounded for fixed regularization parameters. Since $M_j$ is
integrable, this bound tends to zero as $N$ shrinks to the divisor,
uniformly for $t$ near $s$. Thus the original fiber integral is a
uniform limit of continuous functions, and hence it is continuous.
Summing over the finitely many charts, we see that $H_k$ is
continuous on $\overline{\mathbb D}$.

\medskip\noindent
\emph{Step 3: Globalization.}
Let $\varphi\ge0$ be a smooth test function with compact support in
$\mathbb D$, and put $\widehat\varphi:=q^*\varphi$. We apply
\eqref{eq:signed-entropy-current} on the chart $j$ to
$\widehat\varphi\chi_j$, integrate by parts, and sum over the charts.
Then we obtain
\[
 \int_{\mathbb D} H_k\,\ddc\varphi
 \ge-\int_{q^{-1}(\mathbb D)}\widehat\varphi\,S\wedge[E]
       -\mathcal E_k(\varphi),
\]
where
\[
 \mathcal E_k(\varphi):=\sum_j\int_{q^{-1}(\mathbb D)} F_{j,k}^+
 \bigl(\widehat\varphi\ddc\chi_j
       +d\widehat\varphi\wedge d^c\chi_j
       +d\chi_j\wedge d^c\widehat\varphi\bigr)\wedge S.
\]
The error term consists of those terms in
$\ddc(\widehat\varphi\chi_j)$ in which $\chi_j$ is differentiated; no
derivatives of the singular functions $F_{j,k}^+$ occur. If we
replace each local approximation $F_{j,k}^+$ by its common limit
$F^+$, these terms cancel, since $\sum_j\chi_j=1$. By the weighted
$L^1$ convergence in Step~1, $\mathcal E_k(g_\tau)$ is well defined
and $\mathcal E_k(g_\tau)\to0$ as $k\to\infty$. In the next step we
justify the use of the nonsmooth function $g_\tau$ in the inequality
itself.

\medskip\noindent
\emph{Step 4: Green's formula at $\tau$.}
First we keep $k$ fixed. We wish to use $g_\tau$ as the test function
in Step~3; however, $g_\tau$ is singular at $\tau$, and its support
reaches the boundary of $\mathbb D$. We therefore begin with the
truncated functions
\[
 g_{\tau,\kappa,N}:=\min\{(g_\tau-\kappa)^+,N\},
 \qquad \kappa>0,\quad N>0.
\]
The function $(g_\tau-\kappa)^+$ vanishes near the boundary, and the
truncation at height $N$ removes the singularity at $\tau$. We then
smooth these functions inside $\mathbb D$. Letting
$\kappa\downarrow0$, $N\uparrow\infty$, and the smoothing radius tend
to zero, we can choose nonnegative functions
$\varphi_\ell\in C_c^\infty(\mathbb D)$ such that
\[
 \varphi_\ell\longrightarrow g_\tau\quad\text{in }W^{1,1}(\mathbb D),
 \qquad
 \ddc\varphi_\ell\rightharpoonup\nu_\tau-\delta_\tau
 \quad\text{as measures on }\overline{\mathbb D}.
\]
The measures $\ddc\varphi_\ell$ have uniformly bounded total
variation.

We apply Step~3 with $\varphi=\varphi_\ell$ and let $\ell\to\infty$.
Since $H_k$ is continuous on the closed disk, the convergence of the
measures gives
\[
 \int_{\mathbb D} H_k\,\ddc\varphi_\ell
 \longrightarrow
 \int_{\partial \mathbb D}H_k\,d\nu_\tau-H_k(\tau).
\]
Here we need the continuity proved in Step~2. The divisor integral
also converges, since, in relative SNC coordinates, $S\wedge[E]$ has
bounded local densities on each smooth component of $E$, and
$\varphi_\ell\to g_\tau$ in $L^1(\mathbb D)$. Finally,
$\mathcal E_k$ involves only the test function and its first
derivatives. By the bound $F_{j,k}^+\le M_j$, the uniform
integrability of $M_j$ in the vertical variables and the boundedness
of the coefficients of $S$, we have
\[
 |\mathcal E_k(\varphi_\ell)-\mathcal E_k(g_\tau)|
 \le C\|\varphi_\ell-g_\tau\|_{W^{1,1}(\mathbb D)}\longrightarrow0.
\]
Thus, rearranging the inequality in Step~3, we obtain
\[
 H_k(\tau)\le\int_{\partial \mathbb D}H_k\,d\nu_\tau
       +\int_{q^{-1}(\mathbb D)}q^*g_\tau\,S\wedge[E]
       +\mathcal E_k(g_\tau).
\]

Now we let $k\to\infty$. By Step~1 the fiber integral at $\tau$ and
the boundary integral converge, and by Step~3 we have
$\mathcal E_k(g_\tau)\to0$. Consequently,
\[
 \int_{Z_\tau}F^+T_\tau^n
 \le\int_{\partial \mathbb D}\int_{Z_t}F^+T_t^n\,d\nu_\tau
       +\int_{q^{-1}(\mathbb D)}q^*g_\tau\,S\wedge[E].
\]
Finally, on $\partial \mathbb D$ we have $U_t=u_{\delta,a_1,a_2,t}$ and
\[
 T_t^n=e^{u_{\delta,a_1,a_2,t}}f_{a_1,a_2,t}\eta_t^n,
 \qquad
 F^+|_{Z_t}=\log^+\frac{f_{a_1,a_2,t}}{f_t}.
\]
Substituting these expressions into the boundary integral gives
\eqref{eq:signed-green-estimate}. We retain $\log^+$, since the
ratio $f_{a_1,a_2,t}/f_t$ can be less than one.
\end{proof}

\subsection{Vanishing of the boundary and divisor errors}

The two error terms in \eqref{eq:signed-green-estimate} vanish at
different stages. The boundary integral is at most $Ca_1$, uniformly
in the other parameters; hence it disappears when the regularization
of the zeros is removed. The divisor integral is controlled by the
intersection numbers $\int_{E_{i,t}}P_{\delta,t}^{n-1}$, which tend
to zero as $\delta\downarrow0$ by the orthogonality in
\eqref{eq:analytic-model-identity}. For the second estimate we use
\cite[Lemma~6.6]{LPWZ}; for the first we must check that the pole
factors do not affect the bound.

\begin{lemma}
\label{lem:signed-error-details}
The boundary term in \eqref{eq:signed-green-estimate} is at
most $Ca_1$, uniformly in $\delta$ and $a_2$. If
$
 m_{i,\delta}=\int_{E_{i,t}}P_{\delta,t}^{n-1},
$
then
\[
 \int_{q^{-1}(\mathbb D)}q^*g_\tau\,T^n\wedge[E]
       \le C_{\tau,\mathbb D}\sum_i d_i m_{i,\delta},
 \qquad m_{i,\delta}\longrightarrow0\quad(\delta\downarrow0).
\]
The number $m_{i,\delta}$ is independent of $t\in \mathbb D$, since it
is the integral of a closed form over the fibers of $E_i\to \mathbb D$.
The constant $C_{\tau,\mathbb D}$ is independent of $a_1,a_2,\delta$
and of the $C^{1,1}$ norm of $U$.
\end{lemma}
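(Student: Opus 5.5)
The plan is to bound the boundary integrand pointwise by a sum of one-variable model integrands in relative SNC charts, and then compute the model integral after rescaling by $a_1$. This gives exactly the factor $a_1$. The divisor term is handled through the orthogonality in \eqref{eq:analytic-model-identity}.

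\emph{Boundary term.} By \eqref{eq:model-regularization-bound}, $e^{u_{\delta,a_1,a_2,t}}\le e^{C_{\mathbb D}}$ uniformly, so it suffices to bound
\[
 \int_{Z_t}f_{a_1,a_2,t}\log^+\frac{f_{a_1,a_2,t}}{f_t}\,\eta_t^n
\]
uniformly for $t\in\partial\mathbb D$. We have
\[
 \frac{f_{a_1,a_2}}{f}=\prod_i\Bigl(1+\frac{a_1}{|s_{E_i}|^2}\Bigr)^{d_i}
 \prod_j\Bigl(\frac{|s_{C_j}|^2}{|s_{C_j}|^2+a_2}\Bigr)^{c_j}.
\]
The pole factors are at most one, so
\[
 \log^+\frac{f_{a_1,a_2}}{f}\le\sum_i d_i\log\bigl(1+a_1|s_{E_i}|^{-2}\bigr).
\]
This is the only place where the pole regularization is discarded, and it enters with the favorable sign. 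In addition, $f_{a_1,a_2}\le e^{-h}\prod_i(|s_{E_i}|^2+a_1)^{d_i}\prod_j|s_{C_j}|^{-2c_j}$, so the bound is independent of $a_2$, and it is clearly independent of $\delta$.

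Cover $q^{-1}(\overline{\mathbb D})$ by finitely many relative SNC product charts in which $E_i=\{z_1=0\}$ and the poles are given by some of the other coordinates; this is possible since $E$ and $C$ have no common components. In such a chart, the term with index $i$ is bounded by
\[
 C\,(|z_1|^2+a_1)^{d_i}\log\bigl(1+a_1|z_1|^{-2}\bigr)\prod_{j}|z_{k_j}|^{-2c_j},
\]
because the remaining zero factors and $e^{-h}$ are bounded. By Fubini's theorem, the pole variables give a finite constant, since $c_j<1$. The $z_1$-integral is
\[
 \pi\int_0^{R^2}(x+a_1)^{d}\log(1+a_1/x)\,dx
 =\pi a_1^{1+d}\int_0^{R^2/a_1}(s+1)^d\log(1+1/s)\,ds,
\]
with $x=|z_1|^2$ and $x=a_1s$. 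On $[0,1]$ the integrand is integrable, since the logarithmic singularity is integrable. On $[1,R^2/a_1]$ it is at most $2^d s^{d-1}$, and its integral is at most $2^d d^{-1}(R^2/a_1)^d$. Hence the whole expression is at most $Ca_1$ for $a_1\le1$, with $C$ depending only on $d$ and $R$. Summing over $i$ and over the charts gives the bound $Ca_1$, uniformly in $\delta$, $a_2$ and $t$.

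\emph{Divisor term.} This step follows \cite[Lemma~6.6]{LPWZ}. Write $q^*g_\tau\,T^n\wedge[E]=\sum_i d_i\,q^*g_\tau\,T^n|_{E_i}$. On each component $E_i\to\mathbb D$ the current $T|_{E_i}=P_{\delta}|_{E_i}+\ddc U|_{E_i}$ is positive, and $U$ is bounded by \eqref{eq:signed-perron-uniform-bound}. The fiber integrals of $T^n|_{E_i}$ are controlled by the cohomology class $P_\delta^{n-1}$ through Stokes' formula on $E_i$. The procedure is to integrate against $g_\tau$ in the base, integrate by parts twice, and use $\ddc g_\tau=\nu_\tau-\delta_\tau$ together with $|U|\le C'_{\mathbb D}$.

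This yields a bound of the form
\[
 C_{\tau,\mathbb D}\int_{E_{i,t}}P_{\delta,t}^{n-1},
\]
which involves only the sup norms of $U$ and the fixed forms $P$ and $\eta$, and not the $C^{1,1}$ norm of $U$. The number $m_{i,\delta}$ is the fiber integral of a closed form on $E_i$, and hence independent of $t$. Expanding $(P+\delta\eta)^{n-1}$ gives $m_{i,\delta}=\int_{E_{i,t}}P_t^{n-1}+O(\delta)=O(\delta)$, by the orthogonality assumption in \eqref{eq:analytic-model-identity}. Therefore $m_{i,\delta}\to0$.

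The main obstacle is making this integration by parts uniform independently of the $C^{1,1}$ norm of $U$. For that step I rely on the argument of \cite[Lemma~6.6]{LPWZ}, which uses only the uniform bound on $|U|$.
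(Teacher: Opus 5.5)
Your proposal is correct and follows the paper's proof. Like the paper, you discard the pole factors of $f_{a_1,a_2}/f$ because they are at most one, reduce each logarithmic term in a relative SNC chart to the model integral $\int_0^{R}(x+a_1)^d\log(1+a_1/x)\,dx=O(a_1)$ (your global rescaling $x=a_1s$ is the same computation as the paper's split at $x=a_1$), and take the divisor term from \cite[Lemma~6.6]{LPWZ}, with your expansion $m_{i,\delta}=\sum_{k\ge1}\binom{n-1}{k}\delta^k\int_{E_{i,t}}P_t^{n-1-k}\wedge\eta_t^k$ making $m_{i,\delta}\to0$ explicit. The only imprecision is in your sketch of that lemma: $T^n|_{E_i}$ is a top-degree measure on the $n$-dimensional $E_i$, so what Stokes' formula controls after integrating by parts against $\ddc g_\tau$ are the masses on $E_{i,t}$ of products such as $T_t^{k}\wedge P_{\delta,t}^{n-1-k}$, each equal to $m_{i,\delta}$.
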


\begin{proof}
Every pole factor of $f_{a_1,a_2}/f$ is at most one; hence
\[
 \log^+(f_{a_1,a_2}/f)
       \le\sum_i d_i\log(1+a_1/|s_{E_i}|^2).
\]
By \eqref{eq:model-regularization-bound} the exponential factor in
the boundary term is uniformly bounded. We expand the above sum and
estimate one term at a time in a relative SNC chart. Each term
consists of one logarithmic factor multiplied by the full regularized
density. Integrating in the corresponding zero coordinate, we are
reduced, up to uniformly bounded smooth factors and fixed constants,
to the following estimate, where $x$ is the squared radial
coordinate, $R>0$ is a fixed squared radius, and $d=d_i>0$:
\[
 \int_0^R(x+a_1)^d\log(1+a_1/x)\,dx\le C_d a_1.
\]
On $\{x<a_1\}$ the substitution $x=a_1y$ gives
$O(a_1^{d+1})$. On $\{x>a_1\}$ we use
$\log(1+a_1/x)\le a_1/x$ and integrate $2^d a_1x^{d-1}$.
For this summand the other zero coordinates contribute only bounded
power factors; their logarithms occur in the other summands, which
are estimated in the same way. Each pole factor is bounded by
$y^{-c_j}$ in its squared radial coordinate, and
$\int_0^R y^{-c_j}\,dy$ is finite, since $c_j<1$. Summing over the
logarithmic terms and over a finite covering by charts, we obtain the
bound $Ca_1$.

The divisor estimate and the convergence $m_{i,\delta}\to0$ follow
from \cite[Lemma~6.6]{LPWZ}, with the effective divisor there
replaced by $E$. The hypotheses of that lemma are the relative SNC
condition, the orthogonality in \eqref{eq:analytic-model-identity}
and the uniform bound \eqref{eq:signed-perron-uniform-bound}; its
constant involves neither the density nor the $C^{1,1}$ norm of the
envelope.
\end{proof}

\subsection{The limiting comparison and joint positivity}

We now let $a_1\downarrow0$, then $a_2\downarrow0$, and finally
$\delta\downarrow0$. Following \cite[Proposition~7.1]{LPWZ}, we prove
in the next lemma that $u$ is jointly $P$-psh, by means of the
vanishing of the error terms and the comparison with the limiting
Perron envelope.

\begin{lemma}
\label{lem:signed-limit-details}
Define the iterated limits of the Perron envelopes by
\[
 U_\delta:=\lim_{a_2\downarrow0}\lim_{a_1\downarrow0}
                 U_{\delta,a_1,a_2},\qquad
 U_0:=\lim_{\delta\downarrow0}U_\delta,
\]
and write $U_{0,\tau}:=U_0|_{Z_\tau}$ for $\tau\in \mathbb D$.
These limits exist, and on every fiber $Z_\tau$ we have
\[
 (P_\tau+\ddc U_{0,\tau})^n
             \le e^{U_{0,\tau}}f_\tau\eta_\tau^n,
 \qquad u_\tau\le U_{0,\tau}.
\]
These inequalities imply that the function $u$ in
Lemma~\ref{lem:signed-regularization-details} is jointly $P$-psh.
\end{lemma}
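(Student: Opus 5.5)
We pass to the limit in the fiberwise estimate of Lemma~\ref{lem:signed-green-details}, using the error bounds of Lemma~\ref{lem:signed-error-details}, and then compare with the solutions via the comparison principle on each fiber.

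\emph{Existence of the limits.} The envelopes are monotone in the boundary data: for fixed $\delta$, the boundary values $u_{\delta,a_1,a_2}$ increase as $a_1\downarrow0$ and then decrease as $a_2\downarrow0$, converging uniformly by Lemma~\ref{lem:signed-regularization-details}. Since $\sup$-norm perturbations of boundary data shift the envelope by at most the same amount (if $|u-u'|\le\epsilon$ on the boundary, then $v\in\mathcal F$ gives $v-\epsilon\in\mathcal F'$), the envelopes $U_{\delta,a_1,a_2}$ converge uniformly to $U_\delta$. Then $U_\delta$ is the Perron envelope for the continuous data $u_\delta$. As $\delta\downarrow0$, both the classes $\mathcal F$ (for $P_\delta$-psh functions) and the data $u_\delta$ decrease, so $U_\delta$ decreases. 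By \eqref{eq:signed-perron-uniform-bound} it converges to a bounded $P$-psh function $U_0$, which is jointly $P$-psh on $q^{-1}(\mathbb D)$.

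\emph{Entropy inequality.} By Lemmas~\ref{lem:signed-green-details} and~\ref{lem:signed-error-details},
\[
 \int_{Z_\tau}\log^+\frac{T_\tau^n}{e^{U_\tau}f_\tau\eta_\tau^n}\,T_\tau^n
 \le Ca_1+C_{\tau,\mathbb D}\sum_id_im_{i,\delta}.
\]
The left side is a relative entropy type functional $\int\log^+(\mu/\nu)\,d\mu$. I would rewrite it as $\int \Phi(\mu/\nu)\,d\nu$ with $\Phi(x)=x\log^+x$, which is convex and lower semicontinuous jointly in $(\mu,\nu)$ under weak convergence. The fiber measures converge: Bedford--Taylor continuity along uniformly convergent (in $a_1,a_2$) and decreasing (in $\delta$) sequences gives $T_\tau^n\to(P_\tau+\ddc U_{0,\tau})^n$, while $e^{U_\tau}f_\tau\eta_\tau^n\to e^{U_{0,\tau}}f_\tau\eta_\tau^n$ by dominated convergence. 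Letting $a_1\downarrow0$, then $a_2\downarrow0$, then $\delta\downarrow0$ makes the right side tend to zero, since $m_{i,\delta}\to0$. The limiting functional therefore vanishes, so $(P_\tau+\ddc U_{0,\tau})^n\le e^{U_{0,\tau}}f_\tau\eta_\tau^n$. I expect this lower-semicontinuity step to be the main obstacle. The point needing care is that $\nu$ has the unbounded pole density $f_\tau$. I would handle it by localizing away from $\Supp C$, where everything is bounded, and using that neither measure charges the divisor.

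\emph{Comparison and joint positivity.} $U_{0,\tau}$ is a bounded $P_\tau$-psh subsolution in the sense that $\MA\le e^{U_{0,\tau}}f\eta^n$, so it is a supersolution of the exponential equation. The comparison principle for \eqref{eq:analytic-model-equation} (on $\{U_{0,\tau}<u_\tau\}$ the Monge--Amp\`ere mass of $u_\tau$ dominates that of $U_{0,\tau}$, while the right sides are ordered the other way) gives $u_\tau\le U_{0,\tau}$. On $\partial\mathbb D$, equality holds in the limit, since $U_\delta=u_\delta$ there. Hence, for every disk $\mathbb D\Subset\Delta^\circ$ and every $x$ over its center, $u$ is dominated by a jointly $P$-psh function with the same boundary values. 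Combining this with fiberwise $P_t$-psh and upper semicontinuity, restricting to holomorphic discs transverse to the fibers in local product charts gives the submean inequality for $u+a$ (with $\ddc a=P$). Therefore $u$ is jointly $P$-psh, as in \cite[Proposition~7.1]{LPWZ}.
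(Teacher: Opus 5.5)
Your proof is correct, and apart from one step it matches the paper. The existence of the iterated limits (monotonicity and $1$-Lipschitz dependence of the envelope on the boundary data), the comparison argument giving $u_\tau\le U_{0,\tau}$, and the submean inequality along holomorphic graphs are the same as in the paper. One wording slip: a function with $\MA\le e^{U}f\eta^n$ is a supersolution, not a subsolution, though your comparison argument itself is right.

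The difference is in how the entropy bound survives the three limits.

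The paper never passes to the limit in the nonlinear functional. It uses the pointwise inequality $(r-s)^+\le r\log^+(r/s)$ to bound the total mass of $(T_\tau^n-e^{U_\tau}f_\tau\eta_\tau^n)^+$ by $Ca_1+C_{\tau,\mathbb D}\sum_i d_im_{i,\delta}$. Hence $\int\chi\,d(T_\tau^n-e^{U_\tau}f_\tau\eta_\tau^n)$ obeys the same bound for every continuous $\chi$ with values in $[0,1]$. This inequality is linear in the two measures, so it passes directly to weak limits and yields $\mu_0\le\nu_0$.

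You instead use the joint weak lower semicontinuity of the $\Phi$-divergence with $\Phi(x)=x\log^+x$. This $\Phi$ is convex and its conjugate is finite everywhere, so the dual formula gives lower semicontinuity on the compact fiber. Two remarks on your version:
\begin{enumerate}
\item The lower semicontinuous functional must include the recession term $\Phi^\infty(\mu^s)=+\infty\cdot\mu^s(Z_\tau)$. Only then does the vanishing of the limit rule out a singular part of $(P_\tau+\ddc U_{0,\tau})^n$. For a bounded potential that measure need not be absolutely continuous, so $\int\Phi(d\mu/d\nu)\,d\nu=0$ alone would not give $\mu\le\nu$.
\item The localization away from $\Supp C$ that you expect to be the main obstacle is unnecessary. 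The reference measures $e^{U_\tau}f_\tau\eta_\tau^n$ converge in total variation by dominated convergence with majorant $e^{C}f_\tau$, so the pole density causes no difficulty.
\end{enumerate}

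The paper's route is more elementary and needs no convex duality. Yours retains the entropy bound $C_{\tau,\mathbb D}\sum_i d_im_{i,\delta}$ for the intermediate envelopes $U_\delta$, which is more information than the mass bound but is not needed here.
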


\begin{proof}
\emph{Step 1: Convergence of the envelopes.}
For a fixed background form, the Perron envelope increases with the
boundary data. Moreover, if two boundary functions differ by at most
a constant $c$, then, shifting the competitors by $c$, we see that
their envelopes also differ by at most $c$. Thus the envelope is
$1$-Lipschitz with respect to the supremum norm of the boundary data.
Therefore the two successive uniform limits of $u_{\delta,a_1,a_2}$
give
uniform limits of $U_{\delta,a_1,a_2}$, and we obtain an envelope
$U_\delta$ with boundary values $u_\delta$.

Now we let the background parameter vary. For $\delta'>\delta$,
every $P_\delta$-psh competitor is also $P_{\delta'}$-psh, and
$u_\delta\le u_{\delta'}$ on the boundary. Hence
$U_\delta\le U_{\delta'}$. By the bound
\eqref{eq:signed-perron-uniform-bound}, these envelopes decrease to a
bounded function $U_0$ as $\delta\downarrow0$. This function is
$P$-psh. In fact, locally $U_\delta+A+\delta R$ is a decreasing
sequence of psh functions with limit $U_0+A$, where $A,R$ are local
potentials of $P,\eta$ and $R\ge0$.

\medskip\noindent
\emph{Step 2: The limiting Monge--Amp\`ere inequality.}
For nonnegative numbers $r,s$ we have $(r-s)^+\le r\log^+(r/s)$,
where the right-hand side is defined to be zero if $r=0$ and $+\infty$
if $r>0=s$. We apply this inequality, for fixed regularization
parameters, to the densities of $T_\tau^n$ and
$e^{U_\tau}f_\tau\eta_\tau^n$. Then the entropy estimate
\eqref{eq:signed-green-estimate}, together with
Lemma~\ref{lem:signed-error-details}, bounds the total mass of the
positive part of their difference by
\[
 Ca_1+C_{\tau,\mathbb D}\sum_i d_i m_{i,\delta}.
\]
In order to pass to the limit, we express this bound by means of test
functions: the integral of the difference against any continuous
function with values in $[0,1]$ is bounded by the same quantity. We
let first $a_1\downarrow0$ and then $a_2\downarrow0$. By the uniform
convergence of the potentials, the Monge--Amp\`ere measures on each
fiber converge in the sense of Bedford--Taylor. The other measures
converge by dominated convergence, since $f_\tau$ is fixed and
integrable and the exponential factors are uniformly bounded.

Finally we let $\delta\downarrow0$. As in Step~1, adding local
background potentials, we obtain a decreasing sequence of bounded psh
functions, and the convergence theorem of Bedford--Taylor applies
again. The dominated convergence theorem still applies to the terms
with densities. Since $m_{i,\delta}\to0$ by orthogonality, no error
term remains, and we obtain the limiting inequality
\[
 (P_\tau+\ddc U_{0,\tau})^n
                \le e^{U_{0,\tau}}f_\tau\eta_\tau^n.
\]
Thus $U_{0,\tau}$ is a supersolution of
\eqref{eq:analytic-model-equation}, whose solution is $u_\tau$.
Hence $u_\tau\le U_{0,\tau}$ by the comparison principle in the
semipositive big class $[P_\tau]$.

\medskip\noindent
\emph{Step 3: Joint positivity.}
The function $u$ is upper semicontinuous, since it is the decreasing
limit of the continuous functions $u_\delta$. It remains to prove the
submean inequality in the directions involving the base variable.
The above construction works for any relatively compact base disk.
We work in a product chart and choose such a disk $\mathbb D$,
together with a holomorphic graph $\sigma:\overline{\mathbb D}\to Z$,
defined on a neighborhood of $\overline{\mathbb D}$, whose image lies
in the chart and which satisfies $q\circ\sigma(t)=t$. Let $A,R$ be
local potentials of $P,\eta$. The psh function $U_\delta+A+\delta R$
satisfies the submean inequality along $\sigma$. Since
$U_\delta=u_\delta$ over $\partial \mathbb D$, we have
\[
 (U_\delta+A+\delta R)(\sigma(\tau))
   \le\int_{\partial \mathbb D}
             (u_\delta+A+\delta R)(\sigma(t))\,d\nu_\tau(t).
\]
Letting $\delta\downarrow0$ by bounded convergence and using
$u_\tau\le U_{0,\tau}$, we obtain
\[
 (u+A)(\sigma(\tau))
  \le(U_0+A)(\sigma(\tau))
  \le\int_{\partial \mathbb D}(u+A)(\sigma(t))\,d\nu_\tau(t).
\]
Every nonvertical affine complex line in the chart is locally a graph
over the base. Hence the above inequality proves that $u+A$ is
subharmonic along these lines. On vertical lines the subharmonicity
follows from $u_t\in\PSH(Z_t,P_t)$. By the characterization of psh
functions by their restrictions to complex lines, $u+A$ is psh. The
comparison principle has been used only after passing to the
limits; no inequality between $u_\delta$ and $U_\delta$ for
$\delta>0$ is needed.
\end{proof}

\begin{proof}[Proof of Proposition~\ref{prop:signed-model}]
By Lemma~\ref{lem:signed-regularization-details}, there is a unique
bounded solution on each fiber. By
Lemmas~\ref{lem:signed-local-current-details}--\ref{lem:signed-limit-details},
these solutions define a jointly $P$-psh function over every
relatively compact disk in the base. By the uniqueness on the fibers,
the functions on overlapping disks coincide. This proves the
proposition on $Z$.
\end{proof}

\section{Descent and identification of the fiberwise metrics}
\label{sec:log-descent}

In Section~\ref{sec:signed-analysis} we have constructed a metric on
a resolved model outside a discrete set of parameters. In this
section we descend this metric to the original smooth family. By
means of the corrections for the Jacobian and for the boundary, we
identify the descended potential and its equation. We then extend the
potential across the omitted fibers and prove that the equation holds
there with full mass. In this way we obtain the perturbed metric
\eqref{eq:joint-log-ke-final} needed for the extension.

\subsection{Descent to the original family}

We return to the original family $p:\X\to\Delta$ and to the reference
data \eqref{common:reference-data}. Thus $F_B=e^{-\phi_B}$ with
$\phi_B=\sum_i b_i\log|\sigma_i|_{h_i}^2$, and
\[
 \theta=\theta_\omega+\Theta_B+\beta,
\]
where $\theta_\omega=\Theta(h_K)$ is the curvature of the metric on
$K_{\X/\Delta}$ induced by the vertical volume forms $\omega_t^n$,
$\Theta_B=\sum_i b_i\Theta(h_i)$, and $\beta=\Theta(h_M)$. Fix
$\varepsilon>0$, put $\alpha=[\theta+\varepsilon\omega]$, and take
the model of Proposition~\ref{prop:local-log-model}:
\[
 \X\xleftarrow{\mu}Z\xrightarrow{f}Y,
 \qquad q=p\circ\mu,\qquad P=f^*\gamma.
\]
Here $\gamma$ is the K\"ahler form on $Y$. Let $E_\alpha$ be the
effective $f$-exceptional divisor in the decomposition of the model,
and let $E_\mu$ be the relative discrepancy divisor of $\mu$. We
define the signed divisor $D$ by
\[
 \mu^*\alpha=[P]+[E_\alpha],
 \qquad E_\mu=K_{Z/\Delta}-\mu^*K_{\X/\Delta},
 \qquad D=E_\alpha+E_\mu-\mu^*B.
\]
We use the corresponding excluded set $\Sigma$ and the set of good
parameters $\Delta^\circ=\Delta\setminus\Sigma$ defined in
Section~\ref{sec:relative-log-geometry}. For every $t\in\Delta^\circ$,
$\mu_t:Z_t\to X_t$ and $f_t:Z_t\to Y_t$ are modifications.
The following construction takes place on
$Z^\circ:=q^{-1}(\Delta^\circ)$; its potentials and the vertical
Jacobian weight are at first defined only on this open set. Over
$\Delta^\circ$, the hypotheses of Proposition~\ref{prop:signed-model}
are satisfied by the construction of
Section~\ref{sec:relative-log-geometry}, with
\[
 E=E_\alpha+E_\mu,\qquad C=\mu^*B,\qquad
 b=\mu^*\beta+\varepsilon\mu^*\omega.
\]
Thus $E$ and $C$ are the zero and pole divisors of the density on the
model; their common components may be cancelled as in that
proposition. By the klt condition the coefficients of
$E_\mu-\mu^*B$ are greater than $-1$. Since $E_\alpha\ge0$, the same
holds for $D$.

We choose the data $\eta$, $\Theta_D$ and $h$ as in
\eqref{eq:analytic-model-identity}. Namely, $\eta$ is a K\"ahler form
on $Z$, $\theta_\eta$ is the curvature of the metric on the relative
canonical bundle induced by $\eta_t^n$, $h$ is smooth, and
\[
 \theta_\eta+b=P+\Theta_D+\ddc h,
\]
where $\Theta_D$ and $|s_D|^2$ are defined as in
Proposition~\ref{prop:signed-model}.

Let $u$ be the potential given by
Proposition~\ref{prop:signed-model}. On the complement of the
relevant divisors we define the Jacobian weight $J$ on each fiber by
$J|_{Z_t}=\log(\mu_t^*\omega_t^n/\eta_t^n)$, and put
\[
 \psi_Z=u+\log|s_D|^2-h,
 \qquad \Phi_{\rm pull}=\psi_Z-J+\mu^*\phi_B.
\]
The logarithmic expressions are understood with their natural
extensions as locally integrable weights. From the Jacobian identity
\[
 \ddc J=\mu^*\theta_\omega-\theta_\eta+[E_\mu]
\]
we obtain the identities for the curvature and the density
\begin{equation}\label{eq:log-corrected-descent}
 \mu^*(\theta+\varepsilon\omega)+\ddc\Phi_{\rm pull}
       =P+\ddc u+[E_\alpha]\ge0,
 \qquad
 e^{\Phi_{{\rm pull},t}}\mu_t^*(F_{B,t}\omega_t^n)
       =e^{\psi_{Z,t}}\eta_t^n.
\end{equation}
The only logarithmic divisor term which remains in $\Phi_{\rm pull}$
comes from the effective divisor $E_\alpha$, so that
$\Phi_{\rm pull}$ is locally bounded from above. Where $\mu$ is an isomorphism,
$\Phi_{\rm pull}$ defines a $(\theta+\varepsilon\omega)$-psh function
on the corresponding open subset of $\X$. Since $\mu$ is proper, this
function is locally bounded from above near the image of the
exceptional locus, and hence, by the removable singularity theorem, it
extends across that image; we denote the extension by
$\phi_\varepsilon$. By the model equation and
\eqref{eq:log-corrected-descent}, the Monge--Amp\`ere measures
coincide outside the divisors. Since nonpluripolar products put no
mass on divisors, pushing forward by the modifications of the fibers,
we obtain
\[
 \left\langle
    (\theta_t+\varepsilon\omega_t+\ddc\phi_{\varepsilon,t})^n
 \right\rangle
       =e^{\phi_{\varepsilon,t}}F_{B,t}\omega_t^n.
\]
The total mass is $\int_{Z_t}P_t^n$, which is equal to
$\vol([\theta_t+\varepsilon\omega_t])$ by
Proposition~\ref{prop:local-log-model}; thus $\phi_{\varepsilon,t}$ is
a full-mass solution. By the uniqueness for the exponential equation
in a big class, the functions obtained from different local models
coincide.

Moreover, $\phi_{\varepsilon,t}$ has minimal singularities on every
good fiber. In fact, $\phi_{\varepsilon,t}$ is bounded from above and
$F_{B,t}\in L^q$ for some $q>1$, so that the measure
$e^{\phi_{\varepsilon,t}}F_{B,t}\omega_t^n$ has an $L^q$ density. For
this fixed measure there is, by \cite[Theorems~3.1 and~4.1]{BEGZ}, a
full-mass solution with minimal singularities. By the uniqueness for
the equation with fixed measure, its potential differs from
$\phi_{\varepsilon,t}$ by a constant. Hence $\phi_{\varepsilon,t}$
also has minimal singularities. We note that the $L^q$ property of
the density is used here; full mass alone would not suffice.

\subsection{Stability at a missing parameter}

In order to identify the equation on a fiber omitted in the
construction of the model, we adapt \cite[Lemma~7.2]{LPWZ}, where the
reference measures on the fibers are smooth. Away from $B$ we give
the local subsolution argument in detail; we use only nonzero
parameters in the envelopes, and we remove the truncations by
plurifine locality. The boundary density $F_B$ is treated by its
integrability when we compare masses on whole fibers.

\begin{lemma}
\label{lem:weighted-missing-fiber}
Let $p:\X\to\Delta$ be a smooth proper family with $n$-dimensional
fibers $X_t$, let $a$ be a smooth closed real $(1,1)$-form on $\X$
with big fiber classes $[a_t]$, and let $B$ be an effective relative
SNC boundary with coefficients less than one. We use the reference
K\"ahler form $\omega$ and the boundary density $F_B$ defined in
\eqref{common:reference-data}. Suppose that $\Phi\in\PSH(\X,a)$ and
that its restrictions for $t\ne0$ satisfy
\[
 \langle(a_t+\ddc\Phi_t)^n\rangle
      =e^{\Phi_t}F_{B,t}\omega_t^n
\]
with full mass. If
$\vol([a_t])\to\vol([a_0])=V_0>0$, then the same equation holds,
with full mass, at $0$.
\end{lemma}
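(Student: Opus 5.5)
We will prove the two inequalities $\langle(a_0+\ddc\Phi_0)^n\rangle\ge e^{\Phi_0}F_{B,0}\omega_0^n$ and $\langle(a_0+\ddc\Phi_0)^n\rangle\le e^{\Phi_0}F_{B,0}\omega_0^n$ separately, and then check full mass by comparing total masses. First we note that $\Phi_0\not\equiv-\infty$ on $X_0$. For this we use the normalization argument of Step~1 of Lemma~\ref{common:metric-criterion}. Integrating the equation on $X_t$ gives $e^{\sup\Phi_t}J_t=\vol([a_t])$, and $J_t$ lies between $H^{-1/\alpha}$ and $C_F$ by \eqref{common:skoda} and \eqref{common:boundary-Lq}. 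Hence $\sup_{X_t}\Phi_t$ stays bounded as $t\to0$, because $\vol([a_t])\to V_0>0$. By Hartogs' lemma in product charts, $\sup_{X_0}\Phi_0=\limsup_{t\to0}\sup_{X_t}\Phi_t$, which is finite. Moreover, the envelope estimate \cite[Theorem~1.9]{DGG} gives $\Phi_t\ge V_{a_t}-C$ uniformly in $t\ne0$.

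\emph{The lower bound.} Away from $\Supp B$ we adapt the local subsolution argument of \cite[Lemma~7.2]{LPWZ}. Work in a product chart $\Omega\times\{|t|<r\}$ disjoint from $B$, where $F_B$ is smooth and positive. Fix a constant $N$ and consider the truncations $\max(\Phi,-N)$. Pass to the fiberwise equations and take the upper envelope over parameters $t\ne0$ tending to $0$. We then use Bedford--Taylor continuity for bounded functions, in the form that the Monge--Amp\`ere measure of a psh function on the total space dominates the limits of the fiber measures, together with the upper semicontinuity of $\Phi$. This should give $(a_0+\ddc\max(\Phi_0,-N))^n\ge e^{\Phi_0}F_{B,0}\omega_0^n$ on $\{\Phi_0>-N\}$. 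Letting $N\to\infty$ and using plurifine locality of nonpluripolar products, we obtain
\[
\langle(a_0+\ddc\Phi_0)^n\rangle\ge e^{\Phi_0}F_{B,0}\omega_0^n
\]
on $X_0\setminus\Supp B$. Since $\Supp B$ is pluripolar and carries no mass for either side, the inequality holds on all of $X_0$.

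\emph{The upper bound and full mass.} We compare total masses. The left side has mass at most $\vol([a_0])=V_0$. For the right side we want $\int_{X_0}e^{\Phi_0}F_{B,0}\omega_0^n\ge V_0$. We get this from $\int_{X_t}e^{\Phi_t}F_{B,t}\omega_t^n=\vol([a_t])\to V_0$ together with an upper semicontinuity of these integrals as $t\to0$. Near $B$, the functions $e^{\Phi_t}$ are uniformly bounded and $F_{B,t}$ is uniformly in $L^q$, so the contribution of small neighbourhoods of $B$ is uniformly small. Away from $B$, Fatou's lemma fails in the needed direction, so we use Hartogs: $\limsup_{t\to0}\int e^{\Phi_t}g\le\int e^{\Phi_0}g$ for continuous $g\ge0$ with compact support off $B$. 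This follows from upper semicontinuity of $\Phi$ together with the uniform upper bound, via the fact that $\limsup\Phi_t\le\Phi_0$ pointwise under an Ehresmann trivialization. Combining this with the lower bound, the two measures have equal total mass $V_0$ and the first dominates the second, so they are equal. In particular $\Phi_0$ has full mass.

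The main obstacle is the Hartogs-type semicontinuity $\limsup_t\int e^{\Phi_t}F_B\le\int e^{\Phi_0}F_B$. Pointwise we only have $\limsup\Phi_t\le\Phi_0$ along transverse graphs, not for each fixed point. I would first try the standard device of mean-value inequalities on small polydiscs in the total space: average $\Phi$ over a polydisc in the $t$-direction, use plurisubharmonicity to dominate the fiber integral at $0$, and control the $F_B$-weight by the relative SNC chart estimate \eqref{common:boundary-Lq}.
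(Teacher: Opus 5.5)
Your architecture matches the paper's proof: a reverse-Fatou argument for the masses $\int e^{\Phi_t}F_{B,t}\omega_t^n$, a local subsolution argument away from $B$ via envelopes over the punctured parameter disk, and a final comparison of total masses against $\vol([a_0])=V_0$. However, there is a genuine gap in the central subsolution step.

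\textbf{The gap: truncating at a constant.} You work with $w=\max(v,-N)$, where $v=\Phi+A$ is a local psh potential. Globally $\max(\Phi,-N)$ is not even $a$-psh, since $a$ is not assumed semipositive. On $\{v_t<-N\}$ you have $(\ddc w_t)^n=0$, while the target density is $e^{-N}G>0$. So $w_t$ is \emph{not} a subsolution; it satisfies the inequality only on the plurifine open set $\{v_t>-N\}$, and this set moves with $t$.

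Every passage to $t=0$ gives only weak convergence of Monge--Amp\`ere measures. These passages are the finite maxima over parameters $t_j$ (Choquet), the increasing limit to $W_r=(\sup_{0<|t|\le r}w_t)^*$, and the decreasing limit $W_r\downarrow w_0$. An inequality valid only on the varying sets $\{W_r>-N\}$ does not survive weak limits. Hence ``this should give \dots\ on $\{\Phi_0>-N\}$'' is unjustified as written. Two further points:
\begin{itemize}
\item There is no principle that the Monge--Amp\`ere measure of a function on the total space ``dominates the limits of the fiber measures''. What works is the monotone family $W_r$.
\item The inequality $w_0\le W_r$ uses the submean inequality in $t$, that is, joint plurisubharmonicity, and not only upper semicontinuity.
\end{itemize}

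\textbf{The missing idea.} Truncate by a barrier which is itself a subsolution. The paper takes $\rho_C=|z|^2-C$ with $(\ddc\rho_C)^n\ge e^{\rho_C}G\,d\lambda_z$ on the chart for $C\ge C_0$. Then $\max(v,\rho_C)$ is a subsolution on the whole chart for every $t\ne0$. Maxima, envelopes with the common density $G_r=\min_{|t|\le r}G$, and both monotone limits preserve this global inequality. One restricts to $\{v_0>\rho_C\}$ only at the end, for the single function $\max(v_0,\rho_C)$, and then lets $C\to\infty$. Alternatively, keep your constant truncation but carry the globally valid inequality $(\ddc w)^n\ge(e^{w}-e^{-N})G$. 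Its right-hand side is nondecreasing in $w$, so it is stable under maxima and monotone limits.

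\textbf{The ``main obstacle'' is not one.} In a relative SNC product chart $(z,t)$, the curves $t\mapsto(z,t)$ are themselves transverse graphs through every point. Joint upper semicontinuity of $\Phi$ gives $\limsup_{t\to0}\Phi(z,t)\le\Phi(z,0)$ for every $z$. After shrinking, $\Phi$ is bounded above near $X_0$, and $F_{B,t}\omega_t^n\le C\prod_i|z_i|^{-2b_i}\,d\lambda_z$ with $b_i<1$ is an integrable majorant independent of $t$. The reverse Fatou lemma therefore applies directly in each chart, with no splitting near $B$ and no polydisc averaging. This is exactly the paper's Step~1. It also gives $\Phi_0\not\equiv-\infty$, so you do not need the detour through $\sup_{X_t}\Phi_t$ or the unused envelope bound. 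Once the subsolution step is repaired, your chain
$V_0\le\int e^{\Phi_0}F_{B,0}\omega_0^n\le\int\langle(a_0+\ddc\Phi_0)^n\rangle\le V_0$
concludes as in the paper.
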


\begin{proof}
\emph{Step 1: A lower bound for the exponential mass.}
Shrinking the disk, we may assume that $\Phi$ is bounded from above.
In order to compare integrals over different fibers, we cover $X_0$
by finitely many relative SNC product charts and choose a smooth
partition of unity with compact supports in these charts; after
shrinking again, it is a partition of unity on every fiber under
consideration. In each chart we have
\[
 F_{B,t}\omega_t^n
       =g(z,t)\prod_{i=1}^k|z_i|^{-2b_i}\,d\lambda_z,
\]
where $z=(z_1,\ldots,z_n)$ are fiber coordinates, $d\lambda_z$ is the
Euclidean volume form, the first $k$ coordinate hyperplanes are the
boundary components in this chart, and $g$ is smooth and positive,
with uniform bounds on the chosen supports. Since $b_i<1$, the
function $C\prod_i|z_i|^{-2b_i}$ is an integrable upper bound for
the exponential density, independent of $t$. The joint upper
semicontinuity of $\Phi$ gives the pointwise $\limsup$ inequality
needed for the reverse Fatou lemma away from the coordinate divisors.
Applying that lemma in each chart and summing the localized
integrals, we obtain
\[
 V_0=\lim_{t\to0}\int_{X_t}e^{\Phi_t}F_{B,t}\omega_t^n
       \le\int_{X_0}e^{\Phi_0}F_{B,0}\omega_0^n,
\]
where the equality holds since the solutions for $t\ne0$ have full
mass. The right-hand side is finite, by the same majorant, and
positive; in particular, $\Phi_0\not\equiv-\infty$.

\medskip\noindent
\emph{Step 2: The central subsolution inequality.}
Next we estimate the central Monge--Amp\`ere measure from below by the
exponential density. We work in a relatively compact product chart
disjoint from $B$, and write $a=\ddc A$ with $A$ smooth and real.
Then $v=\Phi+A$ is jointly psh, and for $t\ne0$ the equation takes the
form
\[
 \langle(\ddc_zv_t)^n\rangle
       =e^{v_t}G(z,t)\,d\lambda_z,
\]
where $\ddc_z$ denotes differentiation in the fiber variables only and
$G=e^{-A}F_B\rho$, with $\omega_t^n=\rho\,d\lambda_z$ in this
chart; thus $G$ is smooth and positive away from $B$.
On a smaller product chart, $v$ is uniformly bounded from above and
$G$ is bounded. We choose $C_0$ so large that, for every $C\ge C_0$,
the smooth strictly psh function $\rho_C(z)=|z|^2-C$ satisfies
\[
 (\ddc\rho_C)^n\ge e^{\rho_C}G(z,t)\,d\lambda_z
\]
on this chart. Fix such a $C$, and put
$w_C(z,t)=\max\{v(z,t),\rho_C(z)\}$. This function is jointly psh and
uniformly bounded on the chart.

We first show that $w_{C,t}$ is a subsolution for every $t\ne0$.
Fix such a parameter. There are at most countably many level sets of
$v_t-\rho_C$ with positive Lebesgue measure; hence we can choose
$\epsilon_j\downarrow0$ such that
$\{v_t=\rho_C-\epsilon_j\}$ has Lebesgue measure zero. Put
$w_{t,j}=\max\{v_t,\rho_C-\epsilon_j\}$. On
$\{v_t>\rho_C-\epsilon_j\}$, the Monge--Amp\`ere measure of
$w_{t,j}$ coincides with that of $v_t$ by the plurifine locality of
nonpluripolar products \cite{BEGZ}. On $\{v_t<\rho_C-\epsilon_j\}$,
the subsolution inequality follows from
\[
 (\ddc\rho_C)^n
 \ge e^{\rho_C}G(z,t)\,d\lambda_z
 \ge e^{\rho_C-\epsilon_j}G(z,t)\,d\lambda_z.
\]
On the set where equality holds, the right-hand side has zero mass,
while the left-hand side is a positive measure. Consequently
$ (\ddc w_{t,j})^n\ge e^{w_{t,j}}G(z,t)\,d\lambda_z $.
Since $0\le w_{C,t}-w_{t,j}\le\epsilon_j$, the bounded convergence
theorem of Bedford--Taylor \cite{BT82} and dominated convergence
give
\[
 (\ddc w_{C,t})^n\ge e^{w_{C,t}}G(z,t)\,d\lambda_z
 \qquad(t\ne0).
\]

By the same reasoning, the maximum of finitely many bounded psh
subsolutions for one fixed density is a subsolution. In fact, for
two such functions $h_1,h_2$, we shift $h_2$ down by positive
constants tending to zero, so that the equality sets have zero
Lebesgue measure. The shifted function is still a subsolution, since
its exponential decreases. By locality we obtain the inequality for
the shifted maxima, and their uniform convergence to
$\max\{h_1,h_2\}$ gives the assertion. By induction the assertion
holds for any finite number of functions.

For sufficiently small $r>0$ we define
\[
 W_{r,C}(z)=
 \left(\sup_{0<|t|\le r}w_C(z,t)\right)^*,
 \qquad G_r(z)=\min_{|t|\le r}G(z,t),
\]
where the regularization is taken in the variables $z$. The function
$G_r$ is continuous and positive. Each competitor in the punctured
supremum is a bounded subsolution for the density $G_r$. By
Choquet's lemma there are parameters $t_j$ with $0<|t_j|\le r$
such that $W_{r,C}=(\sup_j w_{C,t_j})^*$. Hence the finite maxima
$H_j=\max\{w_{C,t_1},\ldots,w_{C,t_j}\}$ satisfy
\[
 (\ddc H_j)^n\ge e^{H_j}G_r\,d\lambda_z.
\]
They increase to $W_{r,C}$ outside a pluripolar set and have uniform
local upper and lower bounds. By the convergence theorem of
Bedford--Taylor for bounded increasing sequences and by dominated
convergence, using that pluripolar sets have Lebesgue measure zero,
we obtain
\[
 (\ddc W_{r,C})^n\ge e^{W_{r,C}}G_r\,d\lambda_z.
\]

As $r\downarrow0$, these envelopes decrease to $w_{C,0}$. In fact,
for the lower bound, the submean inequality for the bounded
subharmonic function $t\mapsto w_C(z,t)$ gives
$w_C(z,0)\le\sup_{0<|t|\le r}w_C(z,t)\le W_{r,C}(z)$.
For the reverse inequality in the limit, the joint upper
semicontinuity gives, for each $z_0$ and $\epsilon>0$,
$w_C(z,t)\le w_C(z_0,0)+\epsilon$ if $z$ is sufficiently close
to $z_0$ and $|t|$ is sufficiently small; taking the supremum and its
upper regularization, we obtain the same upper bound for
$W_{r,C}(z_0)$. Moreover, $G_r\to G(\cdot,0)$ locally uniformly.
Hence, by the convergence theorem of Bedford--Taylor for bounded
decreasing sequences,
\[
 (\ddc\max\{v_0,\rho_C\})^n
 \ge e^{\max\{v_0,\rho_C\}}G(z,0)\,d\lambda_z.
\]

To remove the truncation, we restrict this inequality to the
plurifine open set $A_C=\{v_0>\rho_C\}$. There, by locality, the
left-hand side is equal to $\langle(\ddc v_0)^n\rangle$ and the
right-hand side to $e^{v_0}G(z,0)\,d\lambda_z$. As
$C\uparrow\infty$, the sets $A_C$ increase to
$\{v_0>-\infty\}$. By the monotone convergence of these restrictions
we obtain the desired inequality on this set, and both measures put
zero mass on $\{v_0=-\infty\}$. Thus
$\langle(\ddc v_0)^n\rangle\ge e^{v_0}G(z,0)\,d\lambda_z$ on
the whole chart. Returning to invariant notation, we obtain
\[
 \langle(a_0+\ddc\Phi_0)^n\rangle
       \ge e^{\Phi_0}F_{B,0}\omega_0^n
\]
on $X_0\setminus B_0$. Neither measure charges $B_0$, since the
left-hand side is a nonpluripolar product and the right-hand side has
an integrable klt density. Hence the inequality holds on the whole of
$X_0$.

\medskip\noindent
\emph{Step 3: Equality.}
Since the total mass of a nonpluripolar Monge--Amp\`ere measure in the
big class $[a_0]$ is at most $\vol([a_0])$, we obtain from Steps~1
and~2
\[
 V_0\le\int_{X_0}e^{\Phi_0}F_{B,0}\omega_0^n
 \le\int_{X_0}\langle(a_0+\ddc\Phi_0)^n\rangle
 \le\vol([a_0])=V_0.
\]
Therefore all these inequalities are equalities. The difference of the
two measures is nonnegative and has total mass zero; hence the two
measures coincide, and $\Phi_0$ has full mass.
\end{proof}

\subsection{Completion of the perturbed metric construction}

We now extend $\phi_\varepsilon$ across the exceptional parameters.
Fix $\varepsilon>0$. On the good fibers we put
$c_t=\sup_{X_t}\phi_{\varepsilon,t}$ and
$v_t=\phi_{\varepsilon,t}-c_t$, so that $\sup_{X_t}v_t=0$.
Let $W$ and $\nu_t$ be as in Section~\ref{sec:common-extension};
recall that $W$ is independent of $t$ and that $F_B\ge1$. By the
uniform Skoda integrability on the fixed smooth K\"ahler family, we
have $\int_{X_t}v_t\,d\nu_t\ge-C$, and Jensen's inequality gives
\[
 \int_{X_t}e^{v_t}F_{B,t}\,d\nu_t
       \ge\exp\!\left(\int_{X_t}v_t\,d\nu_t\right)
       \ge e^{-C}.
\]
By Corollary~\ref{cor:common-domain}, the equation has constant
positive mass $V_\varepsilon=\vol([\theta_t+\varepsilon\omega_t])$.
Since this mass is equal to
$e^{c_t}W\int_{X_t}e^{v_t}F_{B,t}\,d\nu_t$, we obtain
$c_t\le\log(V_\varepsilon/W)+C$. By this upper bound and the
removable singularity theorem, $\phi_\varepsilon$ extends across each
exceptional fiber.

The constancy of the volume gives the remaining hypothesis of
Lemma~\ref{lem:weighted-missing-fiber}; hence the extended potential
satisfies the equation with full mass on every exceptional fiber.
The argument with fixed measure above, based on
\cite[Theorem~4.1]{BEGZ}, also gives minimal singularities there.
Consequently, for every $\varepsilon>0$ and on every fiber of the
fixed disk, we have
\begin{equation}\label{eq:joint-log-ke-final}
 \phi_\varepsilon\in
       \PSH(\X,\theta+\varepsilon\omega),\qquad
 \left\langle
   (\theta_t+\varepsilon\omega_t+\ddc\phi_{\varepsilon,t})^n
 \right\rangle
       =e^{\phi_{\varepsilon,t}}F_{B,t}\omega_t^n,
\end{equation}
with full mass and minimal singularities. In the next section we let
$\varepsilon$ tend to zero by means of
Lemma~\ref{common:metric-criterion}.

\section{Proof of logarithmic extension}
\label{sec:general-proof}

We now combine the geometric and the analytic constructions. By
Section~\ref{sec:relative-log-geometry} the perturbed volumes are
constant on a fixed disk. In Sections~\ref{sec:signed-analysis}
and~\ref{sec:log-descent} we have constructed positively curved
metrics which satisfy the corresponding equations on every fiber. The
criterion of Section~\ref{sec:common-extension} then gives the
extension. Since each step admits a smooth semipositive twist, we
obtain the main theorem in the following form. For $B=0$ and
projective families, the extension assertion follows from
\cite[Theorem~1]{Paun07}, applied to the line bundle $mM$ with its
smooth semipositive metric; here we admit a relative SNC boundary and
a K\"ahler total space.

\begin{theorem}\label{thm:smooth-twist}
In the setting of Theorem~\ref{thm:main}, let $M$ be a rational
holomorphic line bundle with a smooth Hermitian metric $h_M$ of
semipositive curvature on $\X$, and put
$A=K_{\X/\Delta}+B+M$.
Then, for every integer $m\ge1$ such that $mB$ and $mM$ are integral,
every section of $mA_0$ extends
over $p^{-1}(U)$ for a sufficiently small disk $U$ about $0$.
The dimensions $h^0(X_t,mA_t)$ are locally constant, and
$p_*\OO_\X(mA)$ is locally free and commutes with holomorphic
base change. No assumption of nefness or bigness is needed.
\end{theorem}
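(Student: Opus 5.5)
The plan is to verify the hypotheses of Lemma~\ref{common:metric-criterion} on a fixed disk about $0$ and then apply Proposition~\ref{common:weighted-extension}. First, by Lemma~\ref{lem:canonicalization}, I will replace the family by its crepant canonicalization $\tau:\widetilde\X\to\X$. This changes neither the spaces of sections of $mA$ on the total space and on the fibers, nor the admissible degrees. I pull back $h_M$ and take $\beta=\Theta(h_M)\ge0$. If $H^0(X_0,mA_0)=0$ there is nothing to extend at $0$. Otherwise $\mathfrak a_0=c_1(K_{X_0}+B_0)+[\beta_0]$ is pseudoeffective, since it contains $m^{-1}[\operatorname{div}(s)]$. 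Corollary~\ref{cor:common-domain} then shows that, on a disk $U$ about $0$ independent of $\varepsilon$, every class $[\theta_t+\varepsilon\omega_t]$ is big with volume $v_\varepsilon>0$ independent of $t$. This is hypothesis~(1) of the lemma for any sequence $\varepsilon_j\downarrow0$.

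For hypothesis~(2), I fix $\varepsilon=\varepsilon_j$ and use the construction of Sections~\ref{sec:signed-analysis} and~\ref{sec:log-descent}. Near each parameter $t_*\in U$ where the class is big, Proposition~\ref{prop:local-log-model} gives a model. Proposition~\ref{prop:signed-model} applies to it, with $E=E_\alpha+E_\mu$, $C=\mu^*B$, $b=\mu^*\beta+\varepsilon\mu^*\omega$, and with orthogonality coming from the $f$-exceptionality of $E_\alpha+E_\mu$. The descent then yields $\phi_\varepsilon\in\PSH(\X,\theta+\varepsilon\omega)$ satisfying \eqref{eq:joint-log-ke-final} with full mass on every fiber. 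This includes the exceptional parameters, by Lemma~\ref{lem:weighted-missing-fiber} and the constancy of the volume. By uniqueness of full-mass solutions of the exponential equation, the local constructions glue over all of $U$. The potentials are therefore defined on the whole fixed disk, although their bounds may depend on $\varepsilon$; Lemma~\ref{common:metric-criterion} tolerates this, since it normalizes by the $t$-independent constant $\log(v_j/W)$. I would be careful that the canonicalization may require shrinking the disk. Shrinking once, before choosing $U$, suffices, and Corollary~\ref{cor:common-domain} is applied on that disk.

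Lemma~\ref{common:metric-criterion} now gives, for each nonzero $s\in H^0(X_0,mA_0)$, a $\psi\in\PSH(\widetilde\X,\theta)$ with $\psi_0\ge\frac1m\log|s|^2-C_s$. Proposition~\ref{common:weighted-extension} then extends $s$ over a neighborhood of $X_0$. Its integrability uses $b_i<1$ for the canonicalized boundary, which Lemma~\ref{lem:canonicalization} guarantees. The case $m=1$ forces $B=0$ and uses only $h_M$. The extended section descends to $\X$ by the identification of sections in Lemma~\ref{lem:canonicalization}.

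For local constancy, I run the same argument at every parameter $t_1$, with the disk centered at $t_1$. At parameters where $H^0(X_{t_1},mA_{t_1})=0$, upper semicontinuity suffices. The last assertion of Proposition~\ref{common:weighted-extension} then gives local freeness and base change via Grauert's theorem. I expect the main obstacle to be the bookkeeping in gluing: the good sets $\Sigma$ and the models depend on both $t_*$ and $\varepsilon$, so I must check that the locally descended $\phi_\varepsilon$ agree on overlaps. Uniqueness of full-mass solutions of the exponential equation in a big class with $L^q$ density handles this.
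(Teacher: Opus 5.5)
Your proposal is correct and follows essentially the same route as the paper's proof. In both, you canonicalize via Lemma~\ref{lem:canonicalization} and use Corollary~\ref{cor:common-domain} for constant perturbed volumes on a fixed disk. You then build and glue the perturbed potentials through Propositions~\ref{prop:local-log-model} and~\ref{prop:signed-model}, the descent, and Lemma~\ref{lem:weighted-missing-fiber}, apply Lemma~\ref{common:metric-criterion} and Proposition~\ref{common:weighted-extension}, descend the section by the projection formula, and obtain local constancy, local freeness and base change by repeating the argument at every parameter and invoking Grauert's theorem.
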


\begin{proof}[Proof of Theorems~\ref{thm:smooth-twist} and~\ref{thm:main}]
Let \(m\) be admissible, and let \(s\in H^0(X_0,mA_0)\) be a nonzero
section. If \(m=1\), the argument of
Proposition~\ref{common:weighted-extension} applies directly; hence
we may assume that \(m\ge2\).

\medskip\noindent
\emph{Step 1: Reduction to canonical fiber pairs.}
By Lemma~\ref{lem:canonicalization}, after shrinking the disk, there
is a simultaneous crepant modification
\(\tau:\widetilde\X\to\X\) such that every fiber pair
\((\widetilde X_t,\widetilde B_t)\) is canonical. The pullback of the
metric on \(M\) is a smooth metric of semipositive curvature on
\(\tau^*M\), and we have, as rational line bundles,
\[
 K_{\widetilde\X/\Delta}+\widetilde B+\tau^*M=\tau^*A.
\]
The integer \(m\) is again admissible, and the pullback of \(s\) is a
nonzero section in degree \(m\). We choose a K\"ahler form on
\(\widetilde\X\). In Steps~2 and~3 we denote the modified data again
by \((\X,B,M,\omega,A,s)\); in Step~4 we return to the original
family.

\medskip\noindent
\emph{Step 2: The perturbed metrics on a fixed disk.}
Since \(s\ne0\), the class \(c_1(A_0)\) is pseudoeffective. By
Corollary~\ref{cor:common-domain} with \(\beta=\Theta(h_M)\), for
every \(\varepsilon>0\) the class \(c_1(A_t)+\varepsilon[\omega_t]\)
is big on the whole disk, with constant volume \(v_\varepsilon>0\).
For each fixed \(\varepsilon\),
Propositions~\ref{prop:local-log-model} and~\ref{prop:signed-model}
give the perturbed potentials outside the discrete exceptional sets
of the local models. In Section~\ref{sec:log-descent} these
potentials have been descended to the smooth family. By the
uniqueness for the exponential equation they coincide on overlaps,
and they extend across the exceptional parameters by
Lemma~\ref{lem:weighted-missing-fiber}. Thus
\eqref{eq:joint-log-ke-final} holds on a fixed disk for every
\(\varepsilon>0\).

\medskip\noindent
\emph{Step 3: The limit \(\varepsilon\to0\).}
We use the reference metric $h_A$ and its curvature
$\theta=\Theta(h_A)$ in \eqref{common:reference-data} on this
modified family. For any sequence \(\varepsilon_j\downarrow0\), these
potentials and volumes satisfy the hypotheses of
Lemma~\ref{common:metric-criterion}. Hence there is a function
\(\psi\in\PSH(\X,\theta)\) satisfying \eqref{common:section-barrier},
and by Proposition~\ref{common:weighted-extension} the section
\(s\) extends on the canonicalized family. Since all estimates are
made on this fixed smooth family, we need neither uniform control of
the varying bimeromorphic models nor a positive lower bound for
\(v_{\varepsilon_j}\).

\medskip\noindent
\emph{Step 4: Descent.}
We now return to the original family. By the crepant equality in
Step~1, the extended section $\widetilde S$ on $\widetilde\X$ is a
section of $\tau^*\mathcal O_\X(mA)$. Since
$\tau_*\mathcal O_{\widetilde\X}=\mathcal O_\X$, the projection
formula gives
\[
 \tau_*\bigl(\tau^*\mathcal O_\X(mA)\bigr)
       \simeq\mathcal O_\X(mA).
\]
Thus $\widetilde S$ is the pullback of a unique section $S$ on the
original family. Let $\tau_0:\widetilde X_0\to X_0$ be the central
restriction of $\tau$. The sections $S|_{X_0}$ and $s$ have the same
pullback $\widetilde S|_{\widetilde X_0}$ under $\tau_0$. Since
$\tau_0$ is bimeromorphic, the pullback is injective, and hence
$S|_{X_0}=s$.

The same argument applies with any fiber in place of the central
fiber. Fix a parameter $t_0$, and extend a basis of
$H^0(X_{t_0},mA_{t_0})$. The extensions are linearly independent on
nearby fibers. Hence $h^0(X_t,mA_t)\ge h^0(X_{t_0},mA_{t_0})$ near
$t_0$, and the reverse inequality follows from upper
semicontinuity. Thus these dimensions are locally constant. By
Grauert's theorem \cite{Grauert}, $p_*\mathcal O_\X(mA)$ is locally
free with fiber $H^0(X_t,mA_t)$ at $t$, and it commutes with
holomorphic base change, as in
Proposition~\ref{common:weighted-extension}. Taking $M=0$, we obtain
Theorem~\ref{thm:main}.
\end{proof}

\section{Applications and boundary variants}
\label{sec:applications}
In this section we first pass from disks to arbitrary smooth bases,
and show that over Stein bases every section on a fiber extends
globally. We then consider semipositive parts of divisors, rounded
boundaries, and projectively flat coefficient bundles. In the last
subsection we consider boundaries with coefficients equal to one;
here vanishing along the boundary, or a cohomological hypothesis for
lifting, reduces the problem to the klt case. In each application we
specify the line bundle to which the extension theorem is applied.

\subsection{Base change and global extension}

The following theorem is a consequence of the extension theorem over
disks, Grauert's theorem \cite{Grauert} and Cartan's Theorem~B. We
give the reduction in order to explain the assertions over
higher-dimensional bases and over Stein bases.

\begin{theorem}
\label{thm:smooth-base}
Let $f:\X\to S$ be a proper holomorphic submersion with connected
fibers $X_s=f^{-1}(s)$, where $S$ is a complex manifold and $\X$ is
K\"ahler. We denote by $K_{\X/S}=K_\X\otimes f^*K_S^{-1}$ the relative
canonical bundle. Let $B\geq0$ be a rational relative SNC divisor
with coefficients strictly less than one. For every integer $m\ge1$
with $mB$ integral, put
\[
 L_m=\mathcal O_\X\bigl(m(K_{\X/S}+B)\bigr),
 \qquad \mathcal E_m=f_*L_m.
\]
Then $\mathcal E_m$ is locally free and commutes with arbitrary
holomorphic base change.
Every section on a fiber extends over the inverse image of a
neighborhood of its base point.
If $S$ is Stein, every section on a fiber extends over the whole of
$\X$. The same conclusions hold with a twist by a rational line
bundle $M$ with a smooth semipositive metric on $\X$, in the degrees
for which $mM$ is integral, with the convention on admissibility in
Section~\ref{sec:common-extension}.
\end{theorem}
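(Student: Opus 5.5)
The plan is to reduce everything to the disk case, Theorem~\ref{thm:smooth-twist}, by restricting to holomorphic disks in $S$, and then to globalize with Grauert's theorem and Cartan's Theorem~B.

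\emph{Step 1: Constancy of fiber dimensions.} Fix $s_0\in S$ and a coordinate ball $V\ni s_0$. For a point $s_1\in V$, join $s_0$ to $s_1$ by an affine complex line $\ell\subset V$ and let $\lambda\colon\Delta\to V$ parametrize a disk in $\ell$ containing both points. The pullback $\X_\lambda=\X\times_S\Delta\to\Delta$ is a proper holomorphic submersion with connected fibers. Its total space is K\"ahler, being a closed submanifold of $\X\times\Delta$. The pulled-back boundary is relative SNC, since the relative SNC charts over $S$ restrict to charts over $\Delta$. Moreover $K_{\X_\lambda/\Delta}$ is the restriction of $K_{\X/S}$, by relative adjunction for a submersion, and the pullback of $h_M$ is still smooth and semipositive. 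Theorem~\ref{thm:smooth-twist} then shows that $t\mapsto h^0$ is locally constant along the disk. Since $\Delta$ is connected, $h^0(X_{s_0},\cdot)=h^0(X_{s_1},\cdot)$. Hence $s\mapsto h^0(X_s,mA_s)$ is constant on $V$, and therefore locally constant on $S$.

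\emph{Step 2: Local freeness, base change and local extension.} The line bundle $L_m$ (twisted by $mM$ if needed) is flat over $S$ because $f$ is smooth. Since $S$ is reduced and $h^0$ is locally constant, Grauert's theorem \cite{Grauert} shows that $\mathcal E_m$ is locally free and that $\mathcal E_m\otimes\mathbb C(s)\to H^0(X_s,L_m|_{X_s})$ is an isomorphism for every $s$. The base change theorem \cite[Chapter~III, Theorem~3.4]{BS} then gives compatibility with arbitrary holomorphic base change $S'\to S$, exactly as in Proposition~\ref{common:weighted-extension}. For local extension, surjectivity of $\mathcal E_{m,s_0}\to H^0(X_{s_0},\cdot)$ means that every section on $X_{s_0}$ is the restriction of a germ of $\mathcal E_m$ at $s_0$. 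Such a germ is a section of $L_m$ over $f^{-1}(V)$ for some neighborhood $V$ of $s_0$.

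\emph{Step 3: Stein bases.} If $S$ is Stein, Cartan's Theorem~B applied to the coherent sheaf $\mathcal E_m\otimes\mathfrak m_{s_0}$ shows that $H^0(S,\mathcal E_m)\to\mathcal E_m\otimes\mathbb C(s_0)$ is surjective. Combined with the isomorphism of Step~2, every section on $X_{s_0}$ is the restriction of an element of $H^0(S,\mathcal E_m)=H^0(\X,L_m)$.

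The main point to check carefully is Step~1. One has to verify that the hypotheses of Theorem~\ref{thm:smooth-twist} survive restriction to a disk: the K\"ahler property, the relative SNC condition, the identification of the relative canonical bundles, and the fact that the disk can be chosen relatively compact inside the chart so that the theorem applies on all of it. Everything after that is formal.
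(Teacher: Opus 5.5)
Your proposal is correct and follows the paper's proof essentially step for step. Both restrict to the disks cut out of a coordinate ball by complex affine lines to show that $h^0$ is constant, use Grauert's theorem and \cite[Chapter~III, Theorem~3.4]{BS} for local freeness and base change, obtain local extension from the surjectivity onto the fibers of $\mathcal E_m$, and apply Cartan's Theorem~B to $\mathfrak m_{s_0}\mathcal E_m$ for Stein bases. The relative compactness you flag in Step~1 is not actually needed: the disk theorem is applied at each point of the disk, and only local constancy along the connected disk is used.
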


We say that $\mathcal E_m$ commutes with base change if, for every
holomorphic map $\lambda:S'\to S$ of complex spaces, with fiber
product $\X_{S'}=\X\times_SS'$ and projections
$\lambda':\X_{S'}\to\X$ and $f':\X_{S'}\to S'$, the natural map
\[
 \lambda^*\mathcal E_m\longrightarrow f'_*\bigl((\lambda')^*L_m\bigr)
\]
is an isomorphism. In particular, taking $S'=\{s\}$, we identify the
fiber $\mathcal E_m(s)$ of $\mathcal E_m$ at $s$ with
$H^0(X_s,L_m|_{X_s})$. In the twisted case we put
$L_m=\mathcal O_\X(m(K_{\X/S}+B+M))$.

\begin{proof}
We work in a coordinate ball $U\subset S$. Any two points of $U$ lie
on a connected disk contained in $U$, namely the intersection of $U$
with the complex affine line through them. The inverse image of this
disk is a K\"ahler submanifold of $\X$, and the restricted boundary is
relative SNC. Applying Theorem~\ref{thm:main} at each point of the
disk, we see that the dimension of the space of sections on the
fibers is locally constant, and hence constant, along the disk.
Therefore it is constant on $U$. By Grauert's theorem \cite{Grauert},
$\mathcal E_m$ is locally free, and $\mathcal E_m(s)$ is identified
with $H^0(X_s,L_m|_{X_s})$ for every $s$. Since $f$ is a submersion,
the line bundle $L_m$ is flat over $S$. Therefore, by the base change
theorem for analytic cohomology
\cite[Chapter~III, Theorem~3.4(b),(d)]{BS}, these identifications
hold for arbitrary holomorphic base change. Finally, every vector in
the fiber of a locally free sheaf extends to a local section; this
gives the local extension of sections on $X_s$.

Suppose that $S$ is Stein, and let $s\in S$. Let $\mathcal I_s$ be
the ideal sheaf of $s$, and regard $\mathcal E_m(s)$ as a skyscraper
sheaf at $s$. From the exact sequence
\[
 0\longrightarrow\mathcal I_s\mathcal E_m
 \longrightarrow\mathcal E_m
 \longrightarrow \mathcal E_m(s)
 \longrightarrow0
\]
we obtain the following part of the exact cohomology sequence:
\[
 H^0(S,\mathcal E_m)\longrightarrow\mathcal E_m(s)
 \longrightarrow H^1(S,\mathcal I_s\mathcal E_m).
\]
Since $S$ is Stein and $\mathcal I_s\mathcal E_m$ is coherent, the
last group vanishes by Cartan's Theorem~B. Hence every vector in
$\mathcal E_m(s)$ is the value of a global section of $\mathcal E_m$.
Since $H^0(S,\mathcal E_m)=H^0(\X,L_m)$ by the definition of the
direct image, and $\mathcal E_m(s)=H^0(X_s,L_m|_{X_s})$ by base
change, the restriction map
\[
 H^0(\X,L_m)\longrightarrow H^0(X_s,L_m|_{X_s})
\]
is surjective. Hence every section on $X_s$ extends over the whole
of $\X$. The same argument applies to the twisted line bundle.
\end{proof}

\begin{remark}
It suffices to assume that $\X$ is K\"ahler locally over $S$, that
is, that every point of $S$ has a neighborhood $U$ such that
$f^{-1}(U)$ is K\"ahler. In fact, the proof uses the K\"ahler
property only over coordinate disks, and the assertion for Stein
bases uses only the local freeness of the direct image. In
particular, over a disk every section on a fiber extends over the
whole family in each admissible degree, without any shrinking of the
disk depending on the degree.
\end{remark}

\subsection{Semipositive parts and rounded boundaries}

A divisor may contribute to the adjoint bundle either through its
singular weight or through a smooth semipositive metric on its
rational line bundle. The bound on the coefficients is needed for the
integrability of the singular weight. The following corollary
combines these two possibilities.

\begin{corollary}\label{cor:split}
Suppose that a rational divisor \(C\) satisfies
\(\mathcal O_\X(C)\simeq_\Q\mathcal O_\X(B)\otimes M\), where
$\simeq_\Q$ denotes an isomorphism after a common positive integral
tensor power, and where \(B\) and \(M\) satisfy the hypotheses of
Theorem~\ref{thm:smooth-twist}.
Then the sections of \(m(K_{X_0}+C_0)\) extend in every degree
in which these data and the isomorphism are integral.
If \(K_{X_t}+C_t\) is nef on every fiber, the assertion follows also
from Theorem~\ref{thm:nef}.
\end{corollary}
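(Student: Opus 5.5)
The plan is to reduce Corollary~\ref{cor:split} to Theorem~\ref{thm:smooth-twist} by rewriting the line bundle $m(K_{\X/\Delta}+C)$. Fix an integer $m\ge1$ such that $mC$, $mB$ and $mM$ are integral and such that the $\Q$-isomorphism $\mathcal O_\X(C)\simeq_\Q\mathcal O_\X(B)\otimes M$ becomes an honest isomorphism $\mathcal O_\X(mC)\simeq\mathcal O_\X(mB)\otimes M^{m}$ of holomorphic line bundles. This is the meaning of ``degrees in which these data and the isomorphism are integral'', with the convention on rational line bundles from Section~\ref{sec:common-extension}. Tensoring with $mK_{\X/\Delta}$ then gives an isomorphism
\[
 \mathcal O_\X\bigl(m(K_{\X/\Delta}+C)\bigr)\simeq
 \mathcal O_\X\bigl(m(K_{\X/\Delta}+B+M)\bigr)=\mathcal O_\X(mA)
\]
on the total space.

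Next I restrict this isomorphism to $X_0$. Restriction of divisors to a fiber is compatible with the isomorphism: $C$ need not be relative SNC, but $\mathcal O_\X(mC)|_{X_0}=\mathcal O_{X_0}(mC_0)$ as long as $C_0$ is understood as the restricted line bundle, or as the restricted divisor whenever no component of $C$ contains $X_0$. By adjunction for the smooth fiber, $K_{\X/\Delta}|_{X_0}=K_{X_0}$. Hence the isomorphism induces
\[
 H^0\bigl(X_0,m(K_{X_0}+C_0)\bigr)\simeq H^0(X_0,mA_0).
\]

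Now I apply Theorem~\ref{thm:smooth-twist}. The integer $m$ is admissible for $(B,M)$, so every section of $mA_0$ extends to a section of $\mathcal O_\X(mA)$ over $p^{-1}(U)$. Transporting the extension back through the global isomorphism gives a section of $m(K_{\X/\Delta}+C)$ whose restriction is the given section. The restriction agrees because the isomorphism on $X_0$ is the restriction of the one on $\X$. The local constancy of $h^0$ and the local freeness and base change of the direct image transfer through the same isomorphism.

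For the nef case, I note that $c_1(K_{X_t}+C_t)=c_1(A_t)$. So if $K_{X_t}+C_t$ is nef on every fiber, then $A_t$ is nef on every fiber, and Theorem~\ref{thm:nef} applies verbatim to $(B,M)$ to give the same conclusion. The only delicate point in the whole argument is bookkeeping. One must make sure the $\Q$-isomorphism is fixed as a specified identification in degree $m$, so that ``section of $m(K+C)$'' and ``section of $mA$'' correspond canonically. One must also interpret $C_0$ as a restriction in the case where $C$ is not flat over the base. Neither step needs any analysis beyond the cited theorems.
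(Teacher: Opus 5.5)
Your proposal is correct and follows the paper's proof: in a degree where the isomorphism is genuine, $m(K_{\X/\Delta}+C)\simeq m(K_{\X/\Delta}+B+M)$, so Theorem~\ref{thm:smooth-twist} applies, or Theorem~\ref{thm:nef} when these classes are nef on the fibers, and the extension is transported back through the isomorphism. Your extra bookkeeping is accurate and makes explicit what the paper leaves implicit, namely reading $C_0$ as a restricted line bundle and using the equality $c_1(K_{X_t}+C_t)=c_1(A_t)$ to transfer nefness.
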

\begin{proof}
In such a degree the given isomorphism identifies
$m(K_{\X/\Delta}+C)$ with $m(K_{\X/\Delta}+B+M)$. We apply
Theorem~\ref{thm:smooth-twist} to the latter bundle, or
Theorem~\ref{thm:nef} if its restrictions to the fibers are nef.
\end{proof}
For instance, if \(\mathcal O_\X(C)\) itself carries a smooth
semipositive metric, we can take \(B=0\), and then no bound on the
coefficients of \(C\) is needed. The condition $b_i<1$ in the main
theorems is the condition under which the divisor weight alone is
integrable against every section.

For real coefficients, rounding in a fixed degree gives a rational
boundary, to which the theorem applies directly; no limiting argument
in the coefficients is needed.

\begin{corollary}\label{cor:round-twist}
In the family setting of Theorem~\ref{thm:smooth-base}, let $M$
be a rational line bundle with a smooth semipositive metric on $\X$,
and let \(B=\sum b_iD_i\) be an effective real relative SNC divisor
with \(0\le b_i<1\). Then, for every positive integer \(m\) such that
\(mM\) is integral, every section of
\[
 \mathcal O_{X_s}(mK_{X_s}+\lfloor mB_s\rfloor+mM_s)
\]
extends locally. The dimension of the space of such sections is
locally constant, and the corresponding direct image is locally free
and commutes with base change.
\end{corollary}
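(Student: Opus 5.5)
The plan is to reduce Corollary~\ref{cor:round-twist} to Theorem~\ref{thm:smooth-base} in its twisted form, applied to a rational boundary obtained by rounding in the fixed degree $m$. Fix $m\ge1$ with $mM$ integral, and put
\[
 B^{(m)}:=\frac1m\lfloor mB\rfloor=\sum_i\frac{\lfloor mb_i\rfloor}{m}D_i .
\]
This is an effective rational divisor with the same prime components as $B$, or fewer, since components with $\lfloor mb_i\rfloor=0$ drop out. Its support is contained in that of $B$, so it is again relative SNC. Its coefficients satisfy $0\le\lfloor mb_i\rfloor/m\le b_i<1$, and $mB^{(m)}=\lfloor mB\rfloor$ is integral. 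Hence $m$ is admissible for the pair $(B^{(m)},M)$ in the sense of Section~\ref{sec:common-extension}.

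The one point to check carefully is that rounding commutes with restriction to the fibers, that is, $\lfloor mB\rfloor|_{X_s}=\lfloor mB_s\rfloor$. I will argue this as in the discussion before Corollary~\ref{cor:round}. Each $D_i$ is smooth over the base, and in relative SNC charts $D_i$ is given by $z_i=0$. Hence $D_i|_{X_s}$ is a reduced divisor, possibly disconnected but with multiplicity one on every component. Moreover, distinct $D_i$ restrict to divisors without common components, since their intersection has codimension two in every fiber. Therefore the coefficient of each prime component of $B_s$ is exactly one of the $b_i$, and rounding $mb_i$ coefficientwise gives the same result before and after restriction. It follows that
\[
 m\bigl(K_{X_s}+B^{(m)}_s+M_s\bigr)=mK_{X_s}+\lfloor mB_s\rfloor+mM_s
\]
as line bundles, and likewise on the total space $m(K_{\X/S}+B^{(m)}+M)=mK_{\X/S}+\lfloor mB\rfloor+mM$.

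Now apply Theorem~\ref{thm:smooth-base}, twisted by $M$ with its smooth semipositive metric, to the boundary $B^{(m)}$ in the admissible degree $m$. This gives local extension of every fiber section, local constancy of $h^0$, local freeness of the direct image, and commutation with arbitrary holomorphic base change. If $S$ is Stein it also gives global extension, although the corollary does not ask for this. No limiting argument in the coefficients is needed, since $m$ is fixed throughout.

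I expect no real obstacle. The only delicate points are the fiberwise compatibility of rounding, which relies on the relative SNC hypothesis and would fail if components became nonreduced or coincided on a fiber, and the admissibility bookkeeping for the rational twist $M$.
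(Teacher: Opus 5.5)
Your proposal is correct and is essentially the paper's proof. Like the paper, you put $B^{(m)}=m^{-1}\lfloor mB\rfloor$, observe that it is a rational relative SNC boundary with coefficients less than one and $m(K_{\X/S}+B^{(m)}+M)=mK_{\X/S}+\lfloor mB\rfloor+mM$, and then apply the twisted form of Theorem~\ref{thm:smooth-base}, which rests on Theorem~\ref{thm:smooth-twist}; your check that rounding commutes with restriction to the fibers spells out the remark the paper makes before Corollary~\ref{cor:round}.
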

\begin{proof}
For the fixed integer \(m\) we put
$B^{(m)}=m^{-1}\lfloor mB\rfloor$. This is an effective rational
relative SNC boundary with coefficients less than one, and
\[
 m(K_{\X/S}+B^{(m)}+M)
       =mK_{\X/S}+\lfloor mB\rfloor+mM.
\]
Hence Theorem~\ref{thm:smooth-twist} and then
Theorem~\ref{thm:smooth-base} apply.
\end{proof}

This reduction uses the general theorem. Since nefness is not
preserved when the boundary is lowered, Theorem~\ref{thm:nef} applies
in degree $m$ only if \(K_{X_s}+B_s^{(m)}+M_s\) is nef on every fiber.

\begin{remark}
\label{rem:poincare-obstruction}
For the twisted statements, nefness on the fibers, and even smooth
semipositivity on the fibers, is not sufficient. Let $C$ be an
elliptic curve, and choose a holomorphic map
$\Delta\to\operatorname{Pic}^0(C)$ which sends $0$, and no other
point, to the origin $[\mathcal O_C]$. Pulling back the Poincar\'e
bundle, we obtain a line bundle $M$ on $C\times\Delta$ with
$M_0\simeq\mathcal O_C$ and with $M_t$ nontrivial of degree zero for
$t\ne0$. Every $M_t$ carries a smooth flat metric, but
\[
 h^0(C,K_C\otimes M_0)=1,
 \qquad
 h^0(C,K_C\otimes M_t)=0\quad(t\ne0).
\]
Hence the central section does not extend. Thus, already for $B=0$
and $m=1$, the semipositivity of a metric on the total space cannot
be replaced by the semipositivity of metrics chosen separately on the
fibers.
\end{remark}

\subsection{Projectively flat coefficients}

We now apply Theorem~\ref{thm:smooth-twist} and its consequence for
base change, Theorem~\ref{thm:smooth-base}, to sections with
projectively flat coefficients. On the projective bundle of lines in
the dual bundle, these vector-valued sections correspond to sections
of a line bundle. The projective flatness gives the smooth
semipositive twist needed for the scalar theorem on this projective
bundle.

\begin{theorem}
\label{thm:projectively-flat}
In the setting of Theorem~\ref{thm:smooth-base}, let $M$ be a
rational line bundle with a smooth semipositive metric $h_M$, and
let $(E,h_E)$ be a Hermitian holomorphic vector bundle of rank
$r\ge1$ on $\X$ such that
\[
 \Theta(E,h_E)=\eta\,\mathrm{Id}_E,
 \qquad \eta\geq0,
\]
where $\Theta(E,h_E)$ is the Chern curvature and $\eta$ is a smooth
closed real $(1,1)$-form on $\X$. In particular, $E$ may be any
unitary flat vector bundle ($\eta=0$); we do not assume that its
monodromy is finite.
Then, for every integer $m\ge1$ with $mB$ and $mM$ integral, every
section of
\[
 \mathcal O_{X_s}\bigl(m(K_{X_s}+B_s+M_s)\bigr)\otimes E_s
\]
extends locally over $S$. The corresponding direct image is locally
free and commutes with holomorphic base change. If $S$ is Stein,
the extension exists over the whole of $\X$.
\end{theorem}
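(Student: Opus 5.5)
The plan is to pass to the projective bundle $\pi:\mathbb P:=\mathbb P(E^*)\to\X$ of lines in $E^*$, so that $\pi_*\OO_{\mathbb P}(1)=E$. Sections of $E$-valued bundles then become sections of line bundles, and the scalar results apply. Put $\mathcal Y=\mathbb P$ with $F=f\circ\pi:\mathcal Y\to S$. This is a proper holomorphic submersion with connected fibers $\mathbb P(E_s^*)$. Its total space is K\"ahler locally over $S$: add to the pullback of a K\"ahler form a small multiple of the Fubini--Study form induced by $h_E$, using compactness over relatively compact subsets of $S$. By the remark after Theorem~\ref{thm:smooth-base}, this is all the K\"ahler hypothesis we need. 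The boundary $\pi^*B$ is relative SNC over $S$, since $\pi$ is smooth.

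The key identity is
\[
 K_{\mathcal Y/S}=\pi^*K_{\X/S}+K_{\mathcal Y/\X},\qquad
 K_{\mathcal Y/\X}=\OO(-r)\otimes\pi^*\det E .
\]
It gives
\[
 m\bigl(K_{\mathcal Y/S}+\pi^*B+\pi^*M+N\bigr)
 =\pi^*\bigl(m(K_{\X/S}+B+M)\bigr)\otimes\OO(1)
\]
for the rational line bundle
\[
 N:=\tfrac{1}{m}\OO(1)+\OO(r)\otimes\pi^*\det E^{-1}.
\]
Degree $m$ is admissible for $N$ because $mN=\OO(1+mr)\otimes\pi^*\det E^{-m}$ is integral. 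The crucial step is to give $N$ a smooth semipositive metric. The metric $h_E$ induces a metric on $\OO(1)$ whose curvature is $\pi^*\eta+\omega_{FS}$, with $\omega_{FS}$ semipositive and fiberwise positive; this uses $\Theta(E,h_E)=\eta\,\mathrm{Id}$. Since $\det E$ has curvature $r\eta$, the curvature of $N$ is
\[
 \tfrac1m(\pi^*\eta+\omega_{FS})+r\omega_{FS}\ \ge0 .
\]
Its $\pi^*\eta$ part is nonnegative because $\eta\ge0$. I expect this curvature computation, with correct signs and normalizations, to be the main point to check. If the sign convention for $\OO(1)$ on $\mathbb P(E^*)$ versus $\mathbb P(E)$ comes out wrong, I would switch to the other convention and use $\pi_*\OO(1)=E$ on the appropriate bundle.

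Once this is in place, I apply Theorem~\ref{thm:smooth-base} in its twisted form. The family is $\mathcal Y\to S$, the boundary is $\pi^*B$ (coefficients still $<1$), and the twist is $\pi^*M+N$, which is smooth semipositive. Every section on a fiber $\mathcal Y_s$ then extends locally, and globally if $S$ is Stein. The direct image $F_*$ is locally free and commutes with base change.

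By the projection formula, $\pi_*\bigl(\pi^*L\otimes\OO(1)\bigr)=L\otimes E$, and the same holds fiberwise for $\pi_s$. Hence $H^0(\mathcal Y_s,\cdot)=H^0\bigl(X_s,m(K_{X_s}+B_s+M_s)\otimes E_s\bigr)$, and $F_*=f_*\bigl(\OO(m(K+B+M))\otimes E\bigr)$. The base change isomorphism transfers as well, because $\pi$ commutes with base change and has no higher direct images of $\OO(1)$. This yields all the assertions of the theorem.
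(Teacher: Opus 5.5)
Your proposal is correct and is essentially the paper's proof: the paper uses the same projective bundle $\mathbb P(E^*)$ with $\pi_*H=E$, the same total twist $N_m=\pi^*M+(r+\frac1m)H-\pi^*\det E$ (your $\pi^*M+N$), whose curvature is $\pi^*\Theta(h_M)+(r+\frac1m)\omega_{\mathrm{FS}}+\frac1m\pi^*\eta\ge0$, and the projection formula to reduce to Theorems~\ref{thm:smooth-twist} and~\ref{thm:smooth-base}. The only cosmetic difference is that, since $\omega_{\mathrm{FS}}$ is semipositive here, the paper observes that $\pi^*\omega_\X+\omega_{\mathrm{FS}}$ is already a global K\"ahler form, so your small-multiple argument and your appeal to K\"ahlerness locally over $S$ are not needed.
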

\begin{proof}
\emph{Step 1: The projective bundle.}
Fix a K\"ahler form $\omega_\X$ on $\X$. Let
$\pi:\mathbb P(E^*)\to\X$ be the projective bundle of lines in the
dual bundle $E^*$; its fiber over $x\in\X$ is the projective space
$\mathbb P(E_x^*)$ of one-dimensional subspaces of $E_x^*$. The
tautological line bundle
$\mathcal O_{\mathbb P(E^*)}(-1)\subset\pi^*E^*$ has the fiber $\ell$
at the point $[\ell]$, and its dual
$
 H=\mathcal O_{\mathbb P(E^*)}(1)
$
is called the \emph{hyperplane line bundle}. Dualizing the inclusion
$\mathcal O_{\mathbb P(E^*)}(-1)\subset\pi^*E^*$, we see that $H$ is a
quotient of $\pi^*E$. The global sections of $H$ on
$\mathbb P(E_x^*)$ are the linear forms on $E_x^*$, that is, the
elements of $E_x$. Hence
\[
 \pi_*H=E,
 \qquad
 K_{\mathbb P(E^*)/S}=\pi^*K_{\X/S}-rH+\pi^*\det E.
\]
The second formula follows from the relative Euler sequence
\[
 0\longrightarrow\mathcal O_{\mathbb P(E^*)}
 \longrightarrow\pi^*E^*\otimes H
 \longrightarrow T_{\mathbb P(E^*)/\X}
 \longrightarrow0,
\]
which gives $K_{\mathbb P(E^*)/\X}=-rH+\pi^*\det E$, and from
$K_{\mathbb P(E^*)/S}=K_{\mathbb P(E^*)/\X}+\pi^*K_{\X/S}$. We denote
by $\Theta(H)$ and $\Theta(\det E)$ the curvatures of the quotient
metric and of the determinant metric induced by $h_E$. Then
\[
 \Theta(H)=\omega_{\mathrm{FS}}+\pi^*\eta,
 \qquad
 \Theta(\det E)=r\eta,
\]
where $\omega_{\mathrm{FS}}:=\Theta(H)-\pi^*\eta$ is the relative
Fubini--Study form. This is a globally defined closed form.
In local trivializations adapted to the projectively flat connection,
it is semipositive, and positive in the fiber directions of $\pi$.
Consequently, $\pi^*\omega_\X+\omega_{\mathrm{FS}}$ is a K\"ahler
form on $\mathbb P(E^*)$.

\medskip\noindent
\emph{Step 2: The scalar bundle as a twisted log-adjoint bundle.}
For the fixed degree $m$ we define the rational line bundle
\[
 N_m=\pi^*M+\left(r+\frac1m\right)H-\pi^*\det E.
\]
The induced metric on $N_m$ is smooth and semipositive, since its
curvature is
\[
 \pi^*\Theta(h_M)
 +\left(r+\frac1m\right)\omega_{\mathrm{FS}}
 +\frac1m\pi^*\eta\geq0.
\]
Moreover, $mN_m=\pi^*(mM)+(mr+1)H-m\pi^*\det E$ is an integral line
bundle. The boundary $\pi^*B$ is relative SNC and klt, and
\begin{equation}\label{eq:projective-bundle-identity}
 m\bigl(K_{\mathbb P(E^*)/S}+\pi^*B+N_m\bigr)
 =\pi^*\bigl(m(K_{\X/S}+B+M)\bigr)+H.
\end{equation}

\medskip\noindent
\emph{Step 3: Scalar extension.}
Put $L_m=\mathcal O_\X(m(K_{\X/S}+B+M))$ and
$\widetilde L_m=\pi^*L_m\otimes H$. By the projection formula and
$\pi_*H=E$, we have
$\pi_*\widetilde L_m\simeq L_m\otimes E$,
and the same identification holds on each fiber. Thus a given
vector-valued section of $L_m|_{X_s}\otimes E_s$ corresponds to a
section of the line bundle $\widetilde L_m|_{\mathbb P(E_s^*)}$.
By \eqref{eq:projective-bundle-identity}, $\widetilde L_m$ has the
form required in the scalar extension theorem, with boundary
$\pi^*B$ and smooth semipositive twist $N_m$. Applying
Theorems~\ref{thm:smooth-twist} and~\ref{thm:smooth-base} to
$f\circ\pi$, we extend this section. Under the same identification,
the extension gives the required vector-valued section on $\X$.
If $S$ is Stein, the scalar extension is global, and hence so is the
vector-valued extension.

By the identification
$(f\circ\pi)_*\widetilde L_m\simeq f_*(L_m\otimes E)$, the local
freeness and the compatibility with base change also follow from the
scalar theorem. The construction is made separately for each
admissible $m$; hence the twist $N_m$ may depend on $m$.
\end{proof}

\begin{remark}
If we replace $1/m$ by $k/m$ for an integer $k\geq0$, then $H$ is
replaced by $kH$ on the right-hand side of
\eqref{eq:projective-bundle-identity}. Since
$\pi_*\mathcal O_{\mathbb P(E^*)}(k)=\operatorname{Sym}^kE$, the same
argument applies to symmetric powers, with twist curvature
$\pi^*\Theta(h_M)+(r+k/m)\omega_{\mathrm{FS}}+(k/m)\pi^*\eta$.
Put $A=K_{\X/S}+B+M$, and let
$\pi_s:\mathbb P(E_s^*)\to X_s$ and $H_s=H|_{\mathbb P(E_s^*)}$
be the projective bundle and the hyperplane line bundle on the
fiber. If \(A_s\) is nef on every fiber, then the restriction to
$\mathbb P(E_s^*)$ of the log-adjoint class in
\eqref{eq:projective-bundle-identity} is
\(\pi_s^*A_s+H_s/m\) (respectively \(\pi_s^*A_s+kH_s/m\)), which is
nef, since \(H\) carries a smooth semipositive metric. In this case
the reduction uses only Theorem~\ref{thm:nef}, and the minimal model
program is not used.
\end{remark}

\subsection{Coefficients equal to one}

If a boundary coefficient is equal to one, the corresponding divisor
weight is not locally integrable. A section which vanishes along all
such components can be divided by the canonical section of their
reduced sum. In degree $m$ this lowers each of these coefficients by
$1/m$. The resulting boundary is klt, and Theorem~\ref{thm:main}
applies. For sections without this vanishing we use instead
adjunction and an additional cohomological hypothesis for lifting.

\begin{proposition}
\label{prop:lc-vanishing}
Let $p:\X\to\Delta$ be a proper holomorphic submersion with
connected compact fibers and K\"ahler total space.
Let $B=C+D$ be an effective rational relative SNC divisor with
coefficients at most one, where $D=\lfloor B\rfloor$ is the reduced
sum of the components
with coefficient one and $C=B-D$ has coefficients strictly less
than one and no common components with $D$. Let $m\ge1$ be an integer
with $mB$ integral. Then every section of $m(K_{X_0}+B_0)$ which
vanishes along the reduced divisor $D_0$, to order at least one,
extends locally. The direct image of
$\mathcal O_\X(m(K_{\X/\Delta}+B)-D)$ is locally free and commutes
with base change.
\end{proposition}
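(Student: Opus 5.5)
The plan is to reduce Proposition~\ref{prop:lc-vanishing} to Theorem~\ref{thm:main}, as sketched before the statement. We divide by the canonical section of the reduced divisor $D$. Let $\sigma_D$ be the canonical section of $\OO_\X(D)$; since $D$ is relative SNC, $\sigma_D|_{X_0}$ is the canonical section of $\OO_{X_0}(D_0)$ and vanishes to order exactly one along each component. Put
\[
 B^\flat:=C+\Bigl(1-\tfrac1m\Bigr)D .
\]
This is an effective rational relative SNC divisor with the same support as $B$, and its coefficients are $<1$. The coefficients of $C$ are unchanged, and the coefficient of each component of $D$ becomes $1-1/m<1$; if $m=1$ it becomes $0$. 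The divisor $mB^\flat=mC+(m-1)D$ is integral because $mB$ is. As line bundles,
\[
 m(K_{\X/\Delta}+B^\flat)=m(K_{\X/\Delta}+B)-D ,
\]
and this identity restricts to $X_0$ because rounding and restriction commute for relative SNC divisors.

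The steps are as follows.

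\begin{enumerate}
\item Take a section $s\in H^0(X_0,m(K_{X_0}+B_0))$ vanishing along $D_0$. Since $D_0$ is reduced with smooth components, $s=\sigma_{D,0}\,s'$ for a unique $s'\in H^0(X_0,m(K_{X_0}+B^\flat_0))$. This holds because vanishing to order at least one along each component of the reduced SNC divisor $D_0$ means exactly that $s$ is a section of $m(K_{X_0}+B_0)\otimes\OO_{X_0}(-D_0)$.
\item Apply Theorem~\ref{thm:main} to the pair $(\X,B^\flat)$ in degree $m$. This gives an extension $\widetilde s'$ of $s'$ over $p^{-1}(U)$. Then $\widetilde s:=\sigma_D\widetilde s'$ is a section of $m(K_{\X/\Delta}+B)$ that restricts to $s$.
\item For the direct image, use the same identity: $\OO_\X(m(K_{\X/\Delta}+B)-D)=\OO_\X(m(K_{\X/\Delta}+B^\flat))$. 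Theorem~\ref{thm:main}, applied at every parameter, shows that its direct image is locally free and commutes with holomorphic base change.
\end{enumerate}

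The restriction to fibers of $\OO_\X(-D)$ is $\OO_{X_t}(-D_t)$, again because $D$ is relative SNC and hence flat over the base. Consequently, the fiber of this direct image at $t$ is exactly the space of sections of $m(K_{X_t}+B_t)$ vanishing along $D_t$.

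The only point needing care is the first step's identification of the vanishing condition with the twisted bundle, together with its compatibility with restriction; both follow from the relative SNC hypothesis. No analytic input beyond Theorem~\ref{thm:main} is required, so I expect no serious obstacle.
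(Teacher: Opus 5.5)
Your proposal is correct and follows essentially the same route as the paper: replace $B$ by the klt boundary $C+\bigl(1-\tfrac1m\bigr)D$, divide the section by the canonical section of $D_0$, extend with Theorem~\ref{thm:main}, and multiply back by $\sigma_D$; the direct-image assertion follows from the same theorem. One small point: the identity $m(K_{\X/\Delta}+B^\flat)=m(K_{\X/\Delta}+B)-D$ restricts to the fibers directly as an identity of divisors, so no rounding argument is needed there.
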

\begin{proof}
The rational boundary
\[
 B^{(m)}=C+\left(1-\frac1m\right)D
\]
is effective, relative SNC and klt, and $mB^{(m)}=mB-D$ is integral.
Since the given section vanishes along $D_0$, dividing it by the
canonical section of $D_0$, we obtain a holomorphic section in degree
$m$ for the boundary $B^{(m)}$. We apply the klt theorem to
$m(K_{\X/\Delta}+B^{(m)})$, and multiply the extension by the
canonical section of $D$. The assertion on the direct image follows
from the same theorem for $B^{(m)}$.
\end{proof}

\begin{remark}
The argument extends to an arbitrary effective real relative SNC
divisor $B=\sum b_iD_i$, without any upper bound on its coefficients.
Put
\[
 L_m=mK_{\X/\Delta}+\lfloor mB\rfloor,
 \qquad
 Q_m=\sum_i\max\{0,\lfloor mb_i\rfloor-m+1\}D_i.
\]
Then every section of $L_m|_{X_t}$ vanishing along $Q_m|_{X_t}$
extends, since
\[
 L_m-Q_m=m(K_{\X/\Delta}+B_m),
 \qquad
 B_m=\frac1m\sum_i\min\{\lfloor mb_i\rfloor,m-1\}D_i,
\]
and $B_m$ is an effective klt boundary. This concerns only the
indicated subspace of sections; it does not give the extension of all
sections when some coefficient is at least one.
\end{remark}

If $D\to\Delta$ is smooth, then, by adjunction, the restriction of a
general section to $D$ is identified with a klt pluricanonical
section in one lower dimension. This restriction extends by
Theorem~\ref{thm:main}, and, under a cohomological vanishing
hypothesis, we can lift the extension to the total space. The
assertions on the direct images then follow from Grauert's theorem
\cite{Grauert}.

\begin{proposition}
\label{prop:plt-residue}
Let $p:\X\to\Delta$ and $B=D+C$ satisfy the geometric hypotheses of
Proposition~\ref{prop:lc-vanishing}, and assume in addition that
$D\to\Delta$ is smooth; $D$ may have several disjoint components. For
a fixed integer $m\ge1$ with $mB$ integral, put
\[
 L=\mathcal O_\X(m(K_{\X/\Delta}+B)),
 \qquad F=L\otimes\mathcal O_\X(-D).
\]
If $H^1(X_0,F|_{X_0})=0$, then every section of $L|_{X_0}$ extends.
After shrinking the disk, $p_*L$ is locally free and commutes with
base change.
\end{proposition}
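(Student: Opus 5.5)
Given $s\in H^0(X_0,L|_{X_0})$, I will split the problem into extending the residue of $s$ along $D_0$ and then lifting, using the two restriction sequences
\[
 0\to F\to L\to L|_D\to0,\qquad 0\to F|_{X_0}\to L|_{X_0}\to L|_{D_0}\to0 .
\]
By adjunction, $(K_{\X/\Delta}+D)|_D=K_{D/\Delta}$, so $L|_D=\mathcal O_D\bigl(m(K_{D/\Delta}+C|_D)\bigr)$. Since $D\to\Delta$ is smooth and $C$ meets $D$ transversally with coefficients $<1$ (the relative SNC condition), $C|_D$ is a rational relative SNC boundary on $D$ with coefficients in $[0,1)$, and $m\,C|_D$ is integral. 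The total space $D$ is a closed submanifold of the K\"ahler manifold $\X$, hence K\"ahler. Each component of $D$ is proper and smooth over the simply connected disk, so by Stein factorization its fibers are connected, exactly as in Lemma~\ref{lem:fixed-boundary}. Theorem~\ref{thm:main}, applied to each component of $D$ (disconnectedness of $D$ being harmless), then shows that $s|_{D_0}$ extends to a section $\sigma$ of $L|_D$ over $p^{-1}(U)\cap D$. It also shows that $p_*(L|_D)$ is locally free and commutes with base change, and that $t\mapsto h^0(D_t,L|_{D_t})$ is constant.

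To lift, I will use the vanishing hypothesis $H^1(X_0,F|_{X_0})=0$. By upper semicontinuity, $h^1(X_t,F|_{X_t})=0$ for $t$ near $0$. Since $F$ is flat over $\Delta$, Grauert's theorem gives $R^1p_*F=0$ after shrinking. The long exact sequence of direct images for the first sequence then makes $p_*L\to p_*(L|_D)$ surjective over a Stein disk $U$. Hence $\sigma=\widetilde S|_D$ for some section $\widetilde S$ of $L$ on $p^{-1}(U)$. The difference $s-\widetilde S|_{X_0}$ vanishes on $D_0$, so it is a section of $F|_{X_0}$. The vanishing $R^1p_*F=0$, together with base change for $R^0$ (Grauert, using that $H^1$ of the fibers vanishes), gives that $p_*F\to H^0(X_0,F|_{X_0})$ is surjective. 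We may therefore extend this difference to a section $G$ of $F\subset L$ and put $S=\widetilde S+G$; then $S|_{X_0}=s$. Alternatively, the vanishing part could be handled by Proposition~\ref{prop:lc-vanishing}, but the cohomological route is cleaner.

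For local freeness, I will run the same argument at every $t$ near $0$, which is allowed because $h^1(X_t,F_t)=0$ holds for all such $t$. It gives $h^0(X_t,L_t)=h^0(X_t,F_t)+h^0(D_t,L|_{D_t})$, using exactness of the fiber sequence on $H^0$ by the vanishing. Here $h^0(D_t,L|_{D_t})$ is constant by the first step. Also $h^0(X_t,F_t)=\chi(X_t,F_t)+\sum_{i\ge2}(-1)^{i+1}h^i$; rather than handle the higher terms, I will use that $R^1p_*F=0$ and cohomology and base change give local freeness of $p_*F$ with fibers $H^0(X_t,F_t)$. Thus $h^0(X_t,L_t)$ is locally constant, and Grauert together with flatness of $L$ gives local freeness and base change, as in Proposition~\ref{common:weighted-extension}.

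The main obstacle I expect is the base-change bookkeeping for $F$. Vanishing of $H^1$ only at $t=0$ must propagate, via semicontinuity, to the statement that $p_*F$ is locally free with fibers $H^0(X_t,F_t)$. For this I will invoke the standard criterion of \cite[Chapter~III]{BS}: vanishing of $H^1$ on a fiber implies that $R^1$ vanishes nearby and that base change holds in degree $0$.
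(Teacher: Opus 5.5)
Your proposal is correct and follows the paper's proof. You extend the restriction of the section to $D_0$ by the klt theorem on $D$ via adjunction, lift this extension using $R^1p_*F=0$ over a Stein disk, correct the discrepancy in $F|_{X_0}$ using the same vanishing, and obtain local freeness from the constancy of the two end terms. The remaining differences are cosmetic: you invoke cohomology and base change in degree zero and a dimension count, where the paper uses the multiplication-by-$t$ sequence for $F$ and the locally split sequence of direct images.
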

\begin{proof}
\emph{Step 1: Extension on $D$.}
Let $i:D\hookrightarrow\X$ be the inclusion, and let
$q_D=p|_D:D\to\Delta$ be the restricted family. By upper
semicontinuity, $H^1(X_t,F|_{X_t})=0$ on a smaller disk, and hence
$R^1p_*F=0$ there by Grauert's theorem. By adjunction we have
\[
 L|_D=\mathcal O_D\bigl(m(K_{D/\Delta}+C|_D)\bigr),
\]
and the boundary $C|_D$ is relative SNC and klt. Under this
adjunction isomorphism, the restriction to $D$ is the residue map
used here. Applying Theorem~\ref{thm:main} in dimension $n-1$ to each
connected component of $D$ over a smaller disk, we obtain the
required extension on $D$: for a given $s_0\in H^0(X_0,L|_{X_0})$,
the restriction of $s_0$ to $D_0$ extends to a section $r$ of
$L|_D$.

\medskip\noindent
\emph{Step 2: Lifting and correcting the central restriction.}
Choose a smaller Stein disk $U$. Since $R^1p_*F=0$, we have
$H^1(p^{-1}(U),F)=0$ by the Leray spectral sequence and Cartan's
Theorem~B. Hence, by the exact sequence
\[
 0\longrightarrow F\longrightarrow L\longrightarrow i_*(L|_D)
 \longrightarrow0
\]
the section $r$ lifts to a section $s'$ of $L$ over $p^{-1}(U)$.
This lift coincides with $s_0$ on $D_0$, but it need not coincide
with $s_0$ on the whole of $X_0$. The difference
$\delta_0=s_0-s'|_{X_0}$ vanishes along $D_0$, and hence it is a
section of $F|_{X_0}$.

Let $j_0:X_0\hookrightarrow p^{-1}(U)$ be the inclusion of the fiber,
and denote again by $t$ the pullback of the coordinate of the disk.
Since $X_0$ is defined by $t=0$, restriction to $X_0$ gives the exact
sequence
\[
 0\longrightarrow F\xrightarrow{\,t\,}F
 \longrightarrow (j_0)_*(F|_{X_0})\longrightarrow0.
\]
By the same vanishing of $H^1$, $\delta_0$ extends to a section
$\delta$ of $F$. Then, under the inclusion $F\hookrightarrow L$, the
section $s'+\delta$ restricts to $s_0$ on $X_0$.

\medskip\noindent
\emph{Step 3: Direct images.}
The argument for $F$ applies at every nearby fiber. Consequently, by
the extension of bases, upper semicontinuity and Grauert's theorem,
$p_*F$ is locally free and commutes with base change; by the theorem
in lower dimension, the same holds for $(q_D)_*(L|_D)$. Hence the
sequence of direct images
\[
 0\longrightarrow p_*F\longrightarrow p_*L
 \longrightarrow (q_D)_*(L|_D)\longrightarrow0
\]
has locally free end terms. Therefore it splits locally, and $p_*L$
is locally free. On every fiber the corresponding sequence of spaces
of sections is exact, since $H^1(X_t,F|_{X_t})=0$. Comparing these
sequences, we obtain the base change isomorphism for $p_*L$ from
those for the end terms. We note that the klt theorem is used only
on $D$, in dimension $n-1$.
\end{proof}

\begin{corollary}\label{cor:surface-residue}
In Proposition~\ref{prop:plt-residue}, suppose that the fibers $X_t$
are surfaces. Then the conclusion follows from the vanishing of $H^1$
and the Riemann--Roch theorem for curves, without any extension
theorem in higher dimension and without the minimal model program.
\end{corollary}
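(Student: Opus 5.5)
Since the fibers are surfaces, $D\to\Delta$ is a smooth family of compact curves, possibly with several components, and $C|_D$ is a finite set of points moving smoothly over the base with coefficients less than one. I would rerun the proof of Proposition~\ref{prop:plt-residue}. Steps~2 and~3 there use only the vanishing of $H^1(X_0,F|_{X_0})$, Grauert's theorem, Cartan's Theorem~B and the exact sequences
\[
 0\to F\to L\to i_*(L|_D)\to0,\qquad 0\to F\xrightarrow{\,t\,}F\to(j_0)_*(F|_{X_0})\to0,
\]
and none of this involves Theorem~\ref{thm:main}. So the only step to replace is Step~1: every section of $L|_{D_0}$ must extend to a section of $L|_D$ near $D_0$. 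Equivalently, $t\mapsto h^0(D_t,L|_{D_t})$ should be locally constant on each connected component of $D$. Then $(q_D)_*(L|_D)$ is locally free with fibers $H^0(D_t,L|_{D_t})$ by Grauert, and every vector in the central fiber extends to a local section.

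To get this constancy, I would use adjunction, $L|_{D_t}=\mathcal O_{D_t}(m(K_{D_t}+C_t))$ with $C_t=C|_{D_t}=\sum_k c_kx_k(t)$. The points $x_k(t)$ come from the components of $C$ meeting $D$. Since the union is relative SNC, these meet $D$ transversally in sections of $D\to\Delta$, so the number of points and their coefficients $c_k\in[0,1)$, with $mc_k\in\mathbb Z$, do not depend on $t$. Hence the degree
\[
 d=m(2g-2)+\sum_k mc_k
\]
is constant on each component, and so is the genus $g$ of the component.

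Then Riemann--Roch on the curve $D_t$ gives
\[
 h^0-h^1=d+1-g,\qquad h^1=h^0\bigl(K_{D_t}-m(K_{D_t}+C_t)\bigr)=h^0\bigl((1-m)K_{D_t}-mC_t\bigr).
\]
If $m=1$, then $C=0$ by integrality and $c_k<1$, so $L|_{D_t}=K_{D_t}$ and $h^0=g$ is constant. Now let $m\ge2$. The dual bundle has degree $(1-m)(2g-2)-\sum mc_k$, which is negative when $g\ge2$, or when $g=1$ and $C_t\ne0$. In those cases $h^1=0$, so $h^0=d+1-g$ is constant.

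The remaining cases are the main obstacle, where I would argue directly. For $g=1$ and $C_t=0$, the bundle $L|_{D_t}$ is trivial, so $h^0=1$. For $g=0$, the curve is $\mathbb P^1$, so $h^0=\max(d+1,0)$, which depends only on $d$. In every case $h^0$ is constant, and Step~1 follows.

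With Step~1 in hand, the lifting in Step~2 and the base change argument in Step~3 go through verbatim, which gives the conclusion of Proposition~\ref{prop:plt-residue}. The analytic extension theorem and the minimal model program enter nowhere.
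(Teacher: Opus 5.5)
Your proposal is correct and follows the paper's proof essentially verbatim. You reduce to the local constancy of $h^0(D_t,m(K_{D_t}+C_t))$ on the smooth curve family $D\to\Delta$ and verify it by the same genus-by-genus Riemann--Roch case analysis. That analysis uses $C=0$ when $m=1$, $h^1=0$ when the degree exceeds $2g-2$, the trivial bundle when $g=1$ and $C_t=0$, and $\mathbb P^1$ when $g=0$. You then feed the resulting local extension on $D$, via Grauert's theorem, into Steps~2 and~3 of Proposition~\ref{prop:plt-residue}.
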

\begin{proof}
It suffices to prove the local extension for the smooth family of
curves $D\to\Delta$. Let $Y$ be a connected component of a fiber of
$D$, of genus $g$, and let $\Gamma=C|_Y$; this is a rational
effective klt boundary. For $m\Gamma$ integral, both $g$ and the
degree of $m(K_Y+\Gamma)$ are constant in the family. We show that
the dimension of the space of sections of $m(K_Y+\Gamma)$ is
determined by these data.

If $g=0$, the space of sections has dimension
$\max\{0,\deg(m(K_Y+\Gamma))+1\}$. If $g=1$ and the degree is
positive, the dimension is equal to the degree; if the degree is
zero, then $\Gamma=0$ and $mK_Y$ is trivial.

If $g\ge2$ and $m\ge2$, the degree exceeds $2g-2$; hence, by the
Riemann--Roch theorem, $h^0=\deg(m(K_Y+\Gamma))+1-g$. If $m=1$, then
$\Gamma=0$ by integrality and the klt condition, and
$h^0(K_Y)=g$.

In every case the dimension is locally constant. By Grauert's
theorem we therefore have the local extension on $D$, and
Proposition~\ref{prop:plt-residue} applies. If $D$ is disconnected,
each component is treated separately.
\end{proof}

\noindent\textbf{Acknowledgments.}
The first named author is partially supported by the National Key Research and Development Program of China (Grant No.~2024YFA1013201), the Natural Science Foundation of Chongqing (Grant No.~CSTB2024NSCQ-LZX0040), and the Special Project of Chongqing Municipal Science and Technology Bureau (Grant No.~2025CCZ015). The second named author is sponsored by the National Key R\&D Program of China (Grant No.~2024YFA1013200) and the National Natural Science Foundation of China (Grant No.~12671100).

\noindent\textbf{AI Declaration}: The authors used ChatGPT as an auxiliary tool in this work. The authors verified and completed all mathematical arguments and take full responsibility for the content of this paper.

\end{document}